\definecolor{bleu_sombre}{rgb}{0,0,0.6}  \definecolor{rouge_sombre}{rgb}{0.8,0,0}\definecolor{vert_sombre}{rgb}{0,0.6,0}
\theoremstyle{plain}
\newtheorem{theorem}{Theorem}[section]
\newtheorem{lemma}[theorem]{Lemma}
\newtheorem{corollary}[theorem]{Corollary}
\newtheorem{proposition}[theorem]{Proposition}
\theoremstyle{definition}
\newtheorem{remark}[theorem]{Remark}
\newtheorem{notation}{Notation}
\newtheorem{definition}[theorem]{Definition}
\newtheorem{assumption}[theorem]{Assumption}
\newtheorem{observation}[theorem]{Observation}
\renewcommand{\leq}{\leqslant}	\renewcommand{\geq}{\geqslant}
\def\CC{{\mathbb C}}
\def\RR{{\mathbb R}}
\def\NN{{\mathbb N}}
\def\ZZ{{\mathbb Z}}
\def\({\left(}
\def\){\right)}
\def\<{\left\langle}
\def\>{\right\rangle}
\def\le{\leqslant}
\def\ge{\geqslant}
\def\n{\mathbf{n}}
\def\eps{\varepsilon}
\DeclareMathOperator{\p}{\mathbf{p}}
\numberwithin{equation}{section}
\newcommand{\dd}{\mathrm{d}}
\newcommand{\num}{F_h}
\newcommand{\den}{G_h}
\newcommand{\be}{\begin{equation}}
\newcommand{\ee}{\end{equation}}
\newcommand{\bea}{\begin{eqnarray}}
\newcommand{\eea}{\end{eqnarray}}
\newcommand{\bee}{\begin{eqnarray*}}
\newcommand{\eee}{\end{eqnarray*}}
\def\eps{\varepsilon}
\def\pa{\partial}
\def\n{\mathbf{n}}
\def\DP{\mathscr{P}_h}
\def\Nb{{N_\mathcal{B}}}
\def\Nh{{N_{\mathcal{H}}}}
\def\disth{\mathsf{dist}_{\mathcal{H}}}
\def\disthd{\mathsf{dist}_{\mathcal{H},\delta}}
\def\distb{\mathsf{dist}_{\mathcal{B}}}
\begin{document}

\title[Spectrum of the Dirichlet-Pauli operator]{On the semiclassical spectrum \\ of the Dirichlet-Pauli operator}
\author[J.-M. Barbaroux]{J.-M. Barbaroux}
\email{barbarou@univ-tln.fr}
\address[J.-M. Barbaroux]{Aix Marseille Univ, Universit\'e de Toulon, CNRS, CPT, Marseille, France}
\author[L. Le Treust]{L. Le Treust}
\email{loic.le-treust@univ-amu.fr}
\address[L. Le Treust]{Aix Marseille Univ, CNRS, Centrale Marseille, I2M, Marseille, France}
\author[N. Raymond]{N. Raymond}
\email{nicolas.raymond@univ-angers.fr }
\address[N. Raymond]{LAREMA, Universit\'e d'Angers, Facult\'e des Sciences, D\'epartement de Math\'ematiques, 49045 Angers cedex 01, France}
\author[E. Stockmeyer]{E. Stockmeyer}
\email{stock@fis.puc.cl}
\address[E. Stockmeyer]{Instituto de F\'isica, Pontificia Universidad Cat\'olica de Chile, Vicu\~na Mackenna 4860, Santiago 7820436, Chile.}

\subjclass[2010]{35P15, 81Q10, 35Jxx}
\keywords{Pauli operator, magnetic Cauchy-Riemann operators, semiclassical analysis}

\begin{abstract}
This paper is devoted to semiclassical estimates of the eigenvalues of the Pauli operator on a bounded open set whose boundary carries Dirichlet conditions. Assuming that the magnetic field is positive and a few generic conditions, we establish the simplicity of the eigenvalues and provide accurate asymptotic estimates involving Segal-Bargmann and Hardy spaces associated with the magnetic field.
\end{abstract}
\maketitle
\tableofcontents
\section{Introduction}
\label{sec:intro}
In this article we consider the magnetic Pauli operator defined on a bounded and simply-connected domain $\Omega\subset \RR^2$
subject to Dirichlet boundary conditions. This operator  is the model
Hamiltonian of a non-relativistic spin-$\tfrac{1}{2}$ particle, constraint to move in $\Omega$,
interacting with a magnetic field that is perpendicular to the plane.

Formally the Pauli operator acts on two-dimensional spinors and it is
given by $$ \DP =[\sigma\cdot (-ih\nabla-A)]^2,$$ where $h>0$ is a
semiclassical parameter and $\sigma$ is a two-dimensional vector whose
components are the Pauli matrices $\sigma_1$ and $ \sigma_2$. The
magnetic field $B$ enters in the operator through an associated magnetic
vector potential $A=(A_1,A_2)$ that satisfies
$\partial_1 A_2 -\partial_2 A_1=B$. Assuming that the magnetic field
is positive and few other mild conditions we provide precise
asymptotic estimates for the low energy eigenvalues of $\DP$ in the
semiclassical limit (i.e., as $h\to 0$). 

Let us roughly explain our results. Let $\lambda_k(h)$ be the $k$-th
eigenvalue of $\DP$ counting multiplicity. Assuming that the boundary
of $\Omega$ is $\mathscr{C}^2$, we show that there exist 
$\alpha>0$, $\theta_0\in (0,1]$ such that the following holds: For all $k\in \NN^*$, there exists $C_k>0$ such that, as $h\to 0$,
 \begin{align*}
\theta_0 C_k h^{-k+1} e^{-2\alpha/h }(1+o(1)) \le \lambda_k(h)\le C_k
   h^{-k+1} e^{-2\alpha/h } 
(1+o(1))\,.
 \end{align*} 
In particular, this result establishes the simplicity of the eigenvalues in this regime.
The constants $\alpha>0$ and $C_k$ are directly related to the magnetic field and the geometry of $\Omega$ and $C_k$ is expressed in terms of Segal–Bargmann and Hardy norms that are
naturally associated to the magnetic field. In the case when $\Omega$
is a disk and $B$ is radially symmetric we compute $C_k$ explicitly
and find that $\theta_0=1$. This improves by large the known results about the Dirichlet-Pauli operator \cite{E96,HP17} (for details see Section \ref{sec.literature}).

These results may be reformulated in terms of the large magnetic field limit by a simple scaling argument. Indeed,
$\mu_k(b)=b^2\lambda_k(1/b)$, where $\mu_k(b)$ is the $k$-th
eigenvalue of $[\sigma\cdot (-i\nabla-bA)]^2$.

Our results can also be used to describe the spectrum of the magnetic Laplacian with constant magnetic field $B_0$. For instance, when $\Omega$ is bounded, strictly convex with a boundary of class $\mathscr{C}^{1,\gamma}$ ($\gamma>0$), the $k$-th eigenvalue of $(-ih\nabla-A)^2$ with Dirichlet boundary conditions, denoted by $\mu_{k}(h)$, satisfies, for some $c,C>0$ and $h$ small enough,
\begin{equation}\label{eq.B=ct2}
B_{0}h+ch^{-k+1}e^{-2\alpha/h}\leq \mu_{k}(h)\leq B_{0}h+Ch^{-k+1}e^{-2\alpha/h}\,.
\end{equation}
In particular, the first eigenvalues of the magnetic Laplacian are simple in the semiclassical limit. This asymptotic simplicity was not known before and \eqref{eq.B=ct2} is the most accurate known estimate of the magnetic eigenvalues in the case of the constant magnetic field and Dirichlet boundary conditions (See \cite[Section 4]{HelfferMorame} and Section \ref{sec.literature}).

Our study  presents a new approach that establishes several
connections with various aspects of analysis as Cauchy-Riemann
operators, uniformisation, and, to some extent, Toeplitz operators. We
may hope that this work will cast a new light on the magnetic Schr\"odinger operators.

\newpage

 \subsection{Setting and main results.}

	Let $\Omega\subset\RR^2$ be an open set. All along the paper $\Omega$ will satisfy the following assumption.
	
	\begin{assumption}\label{asum:setom}
	$\Omega$ is bounded and simply connected.
	\end{assumption}

	  Consider a magnetic field $B\in \mathcal{C}^{\infty}(\overline{\Omega}, \RR)$. An associated vector potential $A : \overline{\Omega}\longrightarrow 	\RR^2$ is a function such that
	 \[
	 	B = \pa_1 A_2 - \pa_2A_1\,.
	\]
	We will  use the following special choice of vector potential. 
	
	\begin{definition}\label{defi.gauge}
	Let $\phi$ be the unique (smooth) solution of 
			\be \label{eq:0modeFlux}
				\begin{split}
					\Delta \phi &= B, \mbox{ on }\Omega\,,\\
					\phi &= 0,\mbox{ in }\pa \Omega\,.
				\end{split}
			\ee
	The vector field $A = (-\pa_2 \phi, \pa_1\phi)^T:=\nabla\phi^\perp$ is a vector potential associated with $B$.
	\end{definition}
	In this paper, $B$ will be positive (and thus $\phi$ subharmonic) so that \[\max_{x\in\overline{\Omega}}\phi=\max_{x\in\partial\Omega}\phi=0\,.\]
	In particular, the minimum of $\phi$ will be negative and attained in $\Omega$. Note also that the exterior normal derivative of $\phi$, denoted by $\partial_{\n}\phi$, is positive on $\partial\Omega$  if $\Omega$ is $\mathscr{C}^2$ \cite[Hopf's Lemma, Section 6.4.2]{evans1998partial}. 

	\begin{notation} 
	We denote $\langle \cdot,\cdot \rangle$ the $\CC^n$ ($n\geq1$) scalar product (antilinear w.r.t. the left argument), $\langle \cdot,\cdot\rangle_{L^2(U)}$ the $L^2$ scalar product on the set $U$, $\|\cdot\|_{L^2(U)}$ the $L^2$ norm on $U$ and $\|\cdot\|_{L^\infty(U)}$ the $L^\infty$ norm on $U$. We use $o$ and $\mathscr{O}$ for the standard Laudau symbols. 
	\end{notation}

\subsection{The Dirichlet-Pauli operator}	
This paper is devoted to the Dirichlet-Pauli operator $(\DP,\mathsf{Dom}(\DP))$ defined for all $h>0$ on 
	\[\mathsf{Dom}(\DP):= H^2(\Omega; \CC^2)\cap H^1_0(\Omega; \CC^2)\,,\] 
	and whose action is given by the second order differential operator
	\be\label{eq:defDP}
		\DP = [\sigma\cdot (\p -A)]^2 = 
		\begin{pmatrix}
			|\p-A|^2-hB&0\\
			0&|\p-A|^2+hB
		\end{pmatrix} = \begin{pmatrix} \mathscr{L}^-_{h} &0\\0& \mathscr{L}^+_{h}\end{pmatrix}\,.
	\ee
	Here $\p=-ih\nabla$, and  
	\[
	|\p-A|^2 := (\p-A)\cdot(\p-A) = -h^2\Delta-A\cdot \p-\p\cdot A+|A|^2\,,
	\]  and $\sigma=(\sigma_1, \sigma_2, \sigma_3)$ are the Pauli matrices:
		\[
	\sigma_1 = \left(
		\begin{array}{cc}
			0&1\\1&0
		\end{array}
	\right),
	\quad
	\sigma_2 = \left(
		\begin{array}{cc}
			0&-i\\i&0
		\end{array}
	\right),
	\quad
	\sigma_3 = \left(
		\begin{array}{cc}
			1&0\\0&-1
		\end{array}
	\right)\, ,
	\]
and $\sigma\cdot \mathbf{x} = \sigma_1\mathbf{x}_1 + \sigma_2\mathbf{x}_2+\sigma_3\mathbf{x}_3$ for $\mathbf{x} = (\mathbf{x}_1,\mathbf{x}_2,\mathbf{x}_3)$ 
and 
$\sigma\cdot \mathbf{x} = \sigma_1\mathbf{x}_1 + \sigma_2\mathbf{x}_2$ for $\mathbf{x} = (\mathbf{x}_1,\mathbf{x}_2)$.
	In terms of quadratic form, we have by partial integration, for all $\psi\in \mathsf{Dom}(\DP)$,
			\begin{equation}\label{def:quadform}
				\langle \psi,\DP \psi\rangle_{L^2(\Omega)}  =\|\sigma\cdot(\p-A) \psi\|^2_{L^2(\Omega)}=  \|(\p-A) \psi\|^2_{L^2(\Omega)} - \<\psi,\sigma_3 h B\psi\>_{L^2(\Omega)}\, .
			\end{equation}
	 Note that we have the following relation, for all $\mathbf{x}, \mathbf{y}\in\RR^3$,
	\begin{equation}\label{eq.prod}
	(\sigma\cdot\mathbf{x})(\sigma\cdot\mathbf{y})=\mathbf{x}\cdot\mathbf{y}1_{2}+i\sigma\cdot(\mathbf{x}\times\mathbf{y})\,,	
         \end{equation}
         where $1_2$ is the identity matrix of $\CC^2$. 
         The operator $\DP$ is selfadjoint and has compact resolvent. This paper is mainly devoted to the investigation of the lower eigenvalues of $\DP$.  
         \begin{notation}
		Let $(\lambda_k(h))_{k\in \NN^*}$ ($h>0$) denote the increasing sequence of eigenvalues of the operator $\DP$, each one being repeated according to its multiplicity. By the min-max theorem,
		\begin{equation}\label{eq:eigDP}
			\lambda_k(h) = \inf_{
				\begin{array}{c}
					V\subset \mathsf{Dom}(\DP)\,,\\
					\dim V = k\,,
				\end{array}
				}
				\sup_{
				\begin{array}{c}
					\psi\in V\setminus\{0\}\,,
				\end{array}
				}
				\frac{\norm{\sigma\cdot(\p-A)\psi}_{L^2(\Omega)}^2}{\norm{\psi}_{L^2(\Omega)}^2}\,.
		\end{equation}
		 Under the assumption that $B>0$ on $\overline{\Omega}$, the lowest eigenvalues of $\DP$ are the eigenvalues of $\mathscr{L}^-_{h}$.
		 More precisely, our main result states that for any fixed $k\in \NN^*$ and $h>0$ small enough, $\lambda_k(h) $ is the $k$-th eigenvalue of the Schr\"odinger operator  $\mathscr{L}^-_{h}$.
         \end{notation}
	
\subsection{Results and relations with the existing literature}	

\subsubsection{Main theorem}\label{sec.mt}

\begin{notation}\label{not.BH}
	Let us denote by $\mathscr{H}(\Omega)$ and $\mathscr{H}(\CC)$ the sets of holomorphic functions on $\Omega$ and $\CC$.
	We consider the following (anisotropic)  Segal-Bargmann space
	\[\mathscr{B}^2(\CC) = \{u\in\mathscr{H}(\mathbb{C}) :\Nb(u)<+\infty\}\,,\] 
	where
	\[
	\Nb(u)=\left(\int_{\mathbb{R}^2} \left|u \left(y_1+iy_2\right)\right|^2e^{-\mathsf{Hess}_{x_{\min}}\phi(y, y)} \dd y\right)^{1/2}\,.
	\]
	We also introduce a weighted Hardy space
 \[\mathscr{H}^2(\Omega)=\{u\in\mathscr{H}(\Omega) : \Nh (u)<+\infty\}\,,\] 
	where
	\[
	\Nh (u)=\left(\int_{\partial \Omega} \left|u \left(y_1+iy_2\right)\right|^2\partial_{\textbf{n}}\phi \dd y\right)^{1/2}\,.
	\]
	Here, $x_{\rm min}\in \Omega$  and $\mathsf{Hess}_{x_{\min}}\phi\in \mathbb{R}^{2\times 2}$ are defined in Theorem \ref{theo.main} below, $\n(s)$ is the outward pointing unit normal to $\Omega$, and $\pa_\n \phi(s)$ is the normal derivative of $\phi$ on $\partial \Omega$ at $s\in\partial\Omega$.
	We also define for $P\in \mathscr{H}^2(\Omega)$, $A\subset \mathscr{H}^2(\Omega)$,
	\[\begin{split}
	\disth(P,A) = \inf\left\{
		\Nh(P-Q)\,, \mbox{ for all }Q\in A
	\right\}\,,
\end{split}\]
	and
	for $P\in \mathscr{B}^2(\CC)$, $A\subset \mathscr{B}^2(\CC)$,
	\[\begin{split}
	\distb(P,A) = \inf\left\{
		\Nb(P-Q)\,, \mbox{ for all }Q\in A
	\right\}\,.
\end{split}\]
	\end{notation}

The main results of this paper are gathered in the following theorem.
\begin{theorem}\label{theo.main}
We define
	\[\phi_{\min} = \min_{x\in\overline{\Omega}} \phi\,.\] 
	Assume that $\Omega$ is $\mathscr{C}^2$, satisfies {\rm Assumption \ref{asum:setom}}, and
	\begin{enumerate}[\rm (a)]
		\item\label{eq.a1} $B_0 := \inf\{B(x),\,x\in\Omega\}>0$,
		\item\label{eq.a2}  the minimum of $\phi$ is attained at a unique point $x_{\min}$,
		\item\label{eq.a3}  the minimum is non-degenerate, i.e., the Hessian matrix $\mathsf{Hess}_{x_{\min}}\phi$ at $x_{\min}$ (or $z_{\min}$ if seen as a complex number) is positive definite.
	\end{enumerate}
Then, there exists $\theta_0\in(0,1]$ such that for all fixed $k\in\mathbb{N}^*$ ,
\begin{enumerate}[\rm (i)]
\item \label{theo.upb}
$\lambda_{k}(h)\leq  C_{\rm sup}(k) h^{-k+1}e^{2\phi_{\min}/h}(1+o_{h\to0}(1)),$ 
with
\[C_{\rm sup}(k)=2\left(\frac{\disth\left((z-z_{\min})^{k-1},\mathscr{H}^2_k(\Omega)\right)}{\distb\left(z^{k-1},\mathcal{P}_{k-2}\right)}\right)^2\,,\]
where $\mathcal{P}_{k-2} = {\rm span}\left(1,\dots , z^{k-2}\right)\subset \mathscr{B}^2(\CC)$, $\mathcal{P}_{-1}=\{0\}$ and
\begin{equation}\label{space.Hk}
\mathscr{H}^2_k(\Omega) = \{u\in \mathscr{H}^2(\Omega),\ u^{(n)}(z_{\min}) = 0, \mbox{ for }n\in\{0,\dots,k-1\}\}\,.
\end{equation}
\item \label{theo.lob} $\lambda_{k}(h)\geq  C_{\inf}(k) h^{-k+1}e^{2\phi_{\min}/h}(1+o_{h\to0}(1)),$
with
\[C_{\rm inf}(k)=C_{\rm sup}(k)\theta_0\,.\]
\end{enumerate}
A precise definition of $\theta_0$ is given in Remark \ref{rem.theta0}.

Assuming that $\Omega$ is the  disk of radius $1$ centered at $0$, and that $B$ is radial, we have
\[\begin{split}
&C_{\sup}(k)=C_{\inf}(k)=C^\mathrm{rad}(k)=
\frac{B(0)^k\Phi}{2^{k-2}(k-1)!}
\,,\ 
(\theta_0=1)\,,
\\&\Phi=\frac{1}{2\pi}\int_{\Omega}B(x)\dd x=\frac{1}{2\pi}\int_{\pa\Omega}\partial_{\mathbf{n}}\phi\dd s\,.
\end{split}\]
\end{theorem}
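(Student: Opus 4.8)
\textit{Plan: algebraic reduction.} The first step is the factorisation of $\mathscr{L}^{-}_{h}$. Writing $z=x_{1}+ix_{2}$, $\pa_{\bar z}=\tfrac12(\pa_{x_{1}}+i\pa_{x_{2}})$ and using the gauge $A=\na\phi^{\perp}$ of Definition \ref{defi.gauge}, one has the magnetic Cauchy--Riemann operator
\[
D:=\bigl(-ih\pa_{1}-A_{1}\bigr)+i\bigl(-ih\pa_{2}-A_{2}\bigr)=-2ih\,e^{-\phi/h}\,\pa_{\bar z}\!\bigl(e^{\phi/h}\,\cdot\,\bigr),
\]
and, from $[-ih\pa_{1}-A_{1},-ih\pa_{2}-A_{2}]=ihB$ with \eqref{eq.prod}, $\mathscr{L}^{-}_{h}=D^{*}D$, $\mathscr{L}^{+}_{h}=DD^{*}=|\p-A|^{2}+hB\ge hB_{0}$ (by \eqref{def:quadform} and (a)). Since the $k$-th eigenvalue of $\mathscr{L}^{-}_{h}$ will turn out to be $\mathscr{O}(h^{-k+1}e^{2\phi_{\min}/h})=o(h)$, it sits below $hB_{0}=\min\mathrm{spec}(\mathscr{L}^{+}_{h})$ for small $h$, whence the $k$ lowest eigenvalues of $\DP$ are exactly those of $\mathscr{L}^{-}_{h}$. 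Conjugating by $u\mapsto e^{\phi/h}u$ rewrites the quotient in \eqref{eq:eigDP} restricted to spinors $(u,0)$, with $g=e^{\phi/h}u\in H^{1}_{0}(\Omega)$, as $4h^{2}\int_{\Omega}|\pa_{\bar z}g|^{2}e^{-2\phi/h}\dd x\,\big/\int_{\Omega}|g|^{2}e^{-2\phi/h}\dd x$. Two effects then control it: near $x_{\min}$, where $e^{-2\phi/h}$ is maximal, the dilation $x=x_{\min}+\sqrt h\,y$ turns $e^{-2\phi/h}$ into $e^{-2\phi_{\min}/h}e^{-\mathsf{Hess}_{x_{\min}}\phi(y,y)}$, producing the Segal--Bargmann weight; near $\partial\Omega$, where $\phi=0$ and $\pa_{\n}\phi>0$ (Hopf), a boundary layer of width $\sim h$, governed by $\min\{\int_{0}^{+\infty}|\chi'|^{2}e^{2\beta t}\dd t:\chi(0)=0,\ \chi(+\infty)=1\}=2\beta$, produces the weighted Hardy norm.

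\textit{Upper bound (item (i)).} Fix $k$ and $\eps>0$. The plan is, for $1\le j\le k$, to choose a holomorphic $P_{j}$ on $\Omega$, smooth up to $\partial\Omega$, with two features: (a) the rescaled traces of the $P_{j}$ near $x_{\min}$ are the Gram--Schmidt orthogonalisation (in $\mathscr{B}^{2}(\CC)$) of $1,z-z_{\min},\dots,(z-z_{\min})^{k-1}$, up to lower-degree monomials carrying positive powers of $\sqrt h$ that vanish as $h\to0$; and (b) $\Nh\bigl((z-z_{\min})^{j-1}+q_{j}\bigr)\le\disth\bigl((z-z_{\min})^{j-1},\mathscr{H}^{2}_{j}(\Omega)\bigr)+\eps$ for some $q_{j}\in\mathscr{H}^{2}_{j}(\Omega)$ holomorphic near $\overline{\Omega}$ (density of such functions). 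With $\chi(s,t)=1-e^{-2(\pa_{\n}\phi)(s)t/h}$ in boundary coordinates, extended by $1$ away from $\partial\Omega$, put $u_{j}:=h^{-(j-1)/2}(1-\chi)e^{-\phi/h}P_{j}\in H^{2}(\Omega)\cap H^{1}_{0}(\Omega)$. As $P_{j}$ is holomorphic, $Du_{j}$ is supported in the layer, and Laplace's method near $x_{\min}$ together with the layer computation give, as $h\to0$,
\[
\langle u_{i},u_{j}\rangle=\delta_{ij}\,h\,e^{-2\phi_{\min}/h}\,\distb(z^{j-1},\mathcal{P}_{j-2})^{2}(1+o(1)),
\]
\[
\langle Du_{i},Du_{j}\rangle=2\,h^{\,2-\frac{i+j}{2}}\bigl\langle(z-z_{\min})^{i-1}+q_{i},\,(z-z_{\min})^{j-1}+q_{j}\bigr\rangle_{\mathscr{H}^{2}}(1+o(1)).
\]
Inserting these Gram matrices into \eqref{eq:eigDP} and noting that the generalised Rayleigh quotient is dominated by the $j=k$ diagonal entry, $2h^{-k+1}e^{2\phi_{\min}/h}\Nh((z-z_{\min})^{k-1}+q_{k})^{2}/\distb(z^{k-1},\mathcal{P}_{k-2})^{2}$, the remaining entries being of lower order in $1/h$, a Gershgorin-type bound gives
\[
\lambda_{k}(h)\le 2h^{-k+1}e^{2\phi_{\min}/h}\,\frac{\bigl(\disth((z-z_{\min})^{k-1},\mathscr{H}^{2}_{k}(\Omega))+\eps\bigr)^{2}}{\distb(z^{k-1},\mathcal{P}_{k-2})^{2}}\,(1+o(1)),
\]
and letting $\eps\to0$ yields (i).

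\textit{Lower bound (item (ii)).} Let $u_{1},\dots,u_{k}$ be orthonormal eigenfunctions of $\mathscr{L}^{-}_{h}$; by (i), $\lambda_{j}=\mathscr{O}(h^{-k+1}e^{2\phi_{\min}/h})$ for $j\le k$. With $g_{j}=e^{\phi/h}u_{j}$, the identity $\|\pa_{\bar z}g_{j}\|^{2}_{L^{2}(e^{-2\phi/h})}=\lambda_{j}/(4h^{2})$ and H\"ormander's $L^{2}$ estimate for $\bar\pa$ with the strictly subharmonic weight $2\phi/h$ (here $\Delta(2\phi/h)=2B/h\ge2B_{0}/h$) give that the $L^{2}(e^{-2\phi/h})$-Bergman projection $F_{j}$ of $g_{j}$ satisfies $\|g_{j}-F_{j}\|^{2}_{L^{2}(e^{-2\phi/h})}\le\tfrac{h}{2B_{0}}\|\pa_{\bar z}g_{j}\|^{2}_{L^{2}(e^{-2\phi/h})}=o(1)$; thus $u_{j}$ is, up to negligible error, $e^{-\phi/h}F_{j}$ with $F_{j}$ holomorphic, and Agmon estimates localise its mass to a $\sqrt h$-ball around $x_{\min}$. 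Rescaling and passing to limits along a subsequence of $h$, $\hat g_{j}(y):=(h\,e^{-2\phi_{\min}/h})^{1/2}g_{j}(x_{\min}+\sqrt h\,y)$ converges in $\mathscr{B}^{2}(\CC)$ to an orthonormal system $\hat g_{1}^{\infty},\dots,\hat g_{k}^{\infty}$ (holomorphy of the limit from $\pa_{\bar z}g_{j}\to0$, tightness from the localisation). This $k$-dimensional subspace of $\mathscr{B}^{2}(\CC)$ contains a unit vector $v^{\star}=\sum_{j}c_{j}\hat g_{j}^{\infty}$ vanishing to order $\ge k-1$ at $0$; the a priori bounds on $\lambda_{1},\dots,\lambda_{k}$ exclude vanishing order $>k-1$, so it vanishes exactly to order $k-1$, and $u^{\star}=\sum_{j}c_{j}u_{j}$ has $\|Du^{\star}\|^{2}/\|u^{\star}\|^{2}\le\lambda_{k}$ with $\|u^{\star}\|^{2}=\Nb(v^{\star})^{2}(1+o(1))$. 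The holomorphic function $F^{\star}$ attached to $u^{\star}$ has, near $x_{\min}$, vanishing Taylor coefficients up to order $k-2$ and $(k-1)$-st coefficient $\Gamma$ with $|\Gamma|^{2}=h^{-k}e^{2\phi_{\min}/h}\,|(v^{\star})^{(k-1)}(0)/(k-1)!|^{2}(1+o(1))$; since its germ lies in $\Gamma(z-z_{\min})^{k-1}+\mathscr{H}^{2}_{k}(\Omega)$ one gets $\Nh(F^{\star})^{2}\ge|\Gamma|^{2}\disth\bigl((z-z_{\min})^{k-1},\mathscr{H}^{2}_{k}(\Omega)\bigr)^{2}$, while the boundary-layer lower bound gives $\|Du^{\star}\|^{2}\ge2h\,\Nh(F^{\star})^{2}(1-o(1))$. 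Combining,
\[
\lambda_{k}(h)\ \ge\ 2h^{-k+1}e^{2\phi_{\min}/h}\,\disth\bigl((z-z_{\min})^{k-1},\mathscr{H}^{2}_{k}(\Omega)\bigr)^{2}\,\frac{|(v^{\star})^{(k-1)}(0)/(k-1)!|^{2}}{\Nb(v^{\star})^{2}}\,(1-o(1)),
\]
which is (ii), $\theta_{0}\in(0,1]$ being the constant of Remark \ref{rem.theta0}: a $k$-independent lower bound for the ratio $\distb(z^{k-1},\mathcal{P}_{k-2})^{2}\,|w^{(k-1)}(0)/(k-1)!|^{2}/\Nb(w)^{2}$ over the limiting extremisers $w$, i.e.\ a measure of how compatible the anisotropic $\mathscr{B}^{2}(\CC)$-structure at $x_{\min}$ is with the $\mathscr{H}^{2}(\Omega)$-structure on $\partial\Omega$. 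Finally, the two-sided bounds force $\lambda_{k+1}(h)/\lambda_{k}(h)\to+\infty$, hence the eigenvalues are simple for $h$ small.

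\textit{The radial case, and the main obstacle.} If $\Omega$ is the unit disk and $B$ is radial, then $\phi$ is radial, $x_{\min}=z_{\min}=0$, $\mathsf{Hess}_{0}\phi=\tfrac{B(0)}{2}I_{2}$ and $\pa_{\n}\phi\equiv\Phi$ on $\partial\Omega$, so the $z^{j}$ are orthogonal both in $\mathscr{B}^{2}(\CC)$ and in $\mathscr{H}^{2}(\Omega)$; hence $\distb(z^{k-1},\mathcal{P}_{k-2})^{2}=\Nb(z^{k-1})^{2}=2\pi\,2^{k-1}(k-1)!/B(0)^{k}$ and $\disth(z^{k-1},\mathscr{H}^{2}_{k}(\Omega))^{2}=\Nh(z^{k-1})^{2}=2\pi\Phi$, giving $C_{\rm sup}(k)=C_{\rm inf}(k)=B(0)^{k}\Phi/(2^{k-2}(k-1)!)$; moreover separation of variables in the angle reduces $\mathscr{L}^{-}_{h}$ to exactly solvable one-dimensional models whose extremisers $v^{\star}$ are pure monomials, so $\theta_{0}=1$. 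The crux of the whole argument is the last step of the lower bound — comparing the Segal--Bargmann data of a quasi-eigenfunction near $x_{\min}$ with its Hardy data on $\partial\Omega$. Away from the radial case the Gram--Schmidt structure of the anisotropic $\mathscr{B}^{2}(\CC)$ need not match the orthogonal structure of $\mathscr{H}^{2}(\Omega)$, so the extremisers are not pure modes and only their misalignment can be controlled, which is exactly what produces the (generally non-sharp) $\theta_{0}$; making this uniform in $k$ and $h$, and providing the Agmon estimates, the $L^{2}$--$\bar\pa$ inequality and the elliptic regularity needed to pass from $L^{2}$-closeness of $g_{j}$ to $F_{j}$ to the pointwise and boundary control above, is the technically heaviest part.
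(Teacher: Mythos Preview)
Your upper-bound argument is essentially the paper's (holomorphic test functions times an optimised boundary cut-off, Gram matrix computed by Laplace's method), and is fine in outline.

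The lower bound, however, has two genuine gaps, both at the point where $\theta_{0}$ should appear.

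First, the boundary-layer inequality $\|Du^{\star}\|^{2}\ge 2h\,\Nh(F^{\star})^{2}(1-o(1))$ is precisely where the paper \emph{loses} the factor $\theta_{0}$, not where it holds with constant $1$. In tubular coordinates the weight is $e^{2t\partial_{\n}\phi(s)/h}$ with $s$-dependent rate; to decouple the Fourier modes one must replace $\partial_{\n}\phi(s)$ (after conformal mapping to the disk) by its minimum $E$, obtaining a lower bound $2hE\|F^{\star}\|_{L^{2}(\partial\Omega_{\delta})}^{2}$. Converting this $L^{2}$ boundary norm back to the weighted Hardy norm $\Nh$ costs a division by the maximum of the same quantity, and the ratio is exactly the $\theta_{0}$ of Remark~\ref{rem.theta0}. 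Your description of $\theta_{0}$ as ``a measure of how compatible the anisotropic $\mathscr{B}^{2}$-structure at $x_{\min}$ is with the $\mathscr{H}^{2}$-structure on $\partial\Omega$'' is therefore not the paper's constant: $\theta_{0}$ is a purely boundary-geometric ratio $\min/\max$ of $|(G^{-1})'|\,\partial_{\n}\phi\circ G^{-1}$ on $\partial D(0,1)$, and has nothing to do with the Segal--Bargmann space.

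Second, and more seriously, your final step does not close. You produce a unit Bargmann vector $v^{\star}$ vanishing to order exactly $k-1$ at $0$ and then need
\[
\frac{|(v^{\star})^{(k-1)}(0)/(k-1)!|^{2}}{\Nb(v^{\star})^{2}}\ \ge\ \frac{\theta_{0}}{\distb(z^{k-1},\mathcal{P}_{k-2})^{2}}\,.
\]
But for unit vectors vanishing to order exactly $k-1$ this ratio has infimum $0$ (take $v^{\star}\propto z^{k-1}+Mz^{k}$ with $M\to\infty$); knowing only that the vanishing order is exactly $k-1$ gives no lower bound, and your compactness argument does not rule out such degeneration along the subsequence. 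The paper circumvents this entirely: it shows that $\mathrm{Tayl}_{k-1}\circ\widetilde\Pi_{h,\delta}:\mathscr{E}_{h}\to\mathcal{P}_{k-1}$ is a bijection, so one may \emph{choose} $v_{h}\in\mathscr{E}_{h}$ to realise any prescribed Taylor polynomial, and then take the supremum $\sup_{c}|c_{k-1}|/\Nb(\sum c_{n}z^{n})=1/\Nb(P_{k-1})=1/\distb(z^{k-1},\mathcal{P}_{k-2})$, which is attained at the orthogonal polynomial. That freedom to optimise over $\mathscr{E}_{h}$---rather than being handed a single limiting $v^{\star}$---is what makes the constant come out right; your limiting-extremiser route does not provide it.
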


\begin{remark}\label{rem.nondeg}
 Assume that $B=B_{0}>0$ and that $\Omega$ is strictly convex, then $\phi$ has a unique and non-degenerate minimum (see \cite{K85, K85b} and also \cite[Proposition 7.1 and below]{HP17}). Thus, our assumptions are satisfied in this case.
\end{remark}

\begin{remark}\label{rem.choihardy}
	The main properties of the space $\mathscr{H}^2(\Omega)$ can be found in \cite[Chapter $10$]{duren2000theory}. 
	Note that whenever $\partial\Omega$ is supposed to be Dini continuous (in particular $\mathscr{C}^{1,\alpha}$ boundaries, with $\alpha>0$, are allowed), the set $W^{1,\infty}(\Omega)\cap \mathscr{H}^2(\Omega)$ is dense in $\mathscr{H}^2(\Omega)$ (see Lemma \ref{lem.density}).
	This assumption is in particular needed in the proof of Theorem \ref{theo.main}\eqref{theo.upb} (see Remark \ref{rem.hardy_approx})\footnote{Note also that we do not use here the stronger notion of  Smirnov domain in which the set of polynomials in the complex variable is dense in $\mathscr{H}^2(\Omega)$ (see \cite[Theorem $10.6$]{duren2000theory}). Starlike domains and domains with analytic boundary are Smirnov domains.}. 
	The definition of Dini-continuous functions is recalled in the context of the boundary behavior of conformal maps in \cite[Section 3.3]{P92}. It is essentially an integrability property of the derivative of a parametrization of $\partial\Omega$. 
\end{remark}
\begin{remark}\label{rem.hilbertprojhardy}
The Cauchy formula \cite[Theorem $10.4$]{duren2000theory} and the Cauchy-Schwarz inequality ensure that
\[
	|u^{(n)}(z_{\rm min})|\leq \frac{n!}{2\pi \sqrt{\min_{\partial \Omega}\pa_\n\phi}}\Nh(u)\left(\int_{\partial \Omega}\frac{|\dd z|}{|z-z_{\min}|^{2(n+1)}}\right)^{1/2}\,,
\]
for $n\in \NN$ and $u\in \mathscr{H}^2(\Omega)$ (see  also the proof of Lemma \ref{lem:L2esti1}). This ensures that the space $\mathscr{H}^2_k(\Omega)$ defined in \eqref{space.Hk} is a closed vector subspace of $\mathscr{H}^2(\Omega)$ and that $\disth \left((z-z_{\rm min})^{k-1},\mathscr{H}^2_k(\Omega)\right)>0$ (see \cite[Corollary $5.4$]{MR2759829}) since $(z-z_{\rm min})^{k-1}\notin \mathscr{H}^2_k(\Omega)$.
\end{remark}
\begin{remark}\label{rem.sym}
In the case when $B$ is radial on the unit disk  $\Omega=D(0,1)$, we get, using Fourier series, that $(z^n)_{n\geq0}$ is an orthogonal basis for $\Nb$ and $\Nh$ which are up to  normalization factors, the Szeg\"o polynomials \cite[Theorem $10.8$]{duren2000theory}. In particular, $\mathscr{H}^2_k(\Omega)$ is $\Nh$-orthogonal to $z^{k-1}$ so that
\[
	\disth \left(z^{k-1},\mathscr{H}^2_k(\Omega)\right)^2 = \Nh(z^{k-1})^2 = \int_{\partial \Omega} \pa_\n\phi = 2\pi\Phi\,.
\]
In addition, $\mathcal{P}_{k-2}$ is $\Nb$-orthogonal to $z^{k-1}$ so that
\[
	\distb \left(z^{k-1},\mathcal{P}_{k-2}\right)^2 = \Nb(z^{k-1})^2 = 
	2\pi\frac{2^{k-1}(k-1)!}{B(0)^k}
	\,,
\]
and the radial case part of Theorem \ref{theo.main} follows. 
\end{remark}
\begin{remark}
	The proof of the upper bound can easily be extended to the case where $\Omega$ is not necessarily simply connected (see Remark \ref{rem.hardy_approx}).
\end{remark}
\begin{remark}
	Theorem \ref{theo.main} is concerned with the asymptotics of each eigenvalues $\lambda_{k}(h)$ of the operator $\DP$, ($k\in\NN^*$)  as $h\to 0$. In particular,  $\lambda_k(h)$ tends to $0$ exponentially. Of course, this does not mean that all the eigenvalues go to $0$ uniformly with respect to $k$. For $h>0$, consider for example
	\[
		\nu_1(h) = \inf_{v\in H^1_0(\Omega; \CC)\setminus\{0\}}\frac{\braket{u, \mathscr{L}^+_{h} u}}{\norm{v}_{L^2(\Omega)}^2}\,,
	\]
	the lowest eigenvalue of the operator $\mathscr{L}^+_{h}$. For fixed $h>0$, there exists $k(h)\in\NN^*$ such that $\nu_1(h) = \lambda_{k(h)}(h)$. By \eqref{eq:defDP}, we have $\nu_1(h)\geq 2B_0h$ and thus $\nu_1(h)$ does not converge to $0$ with exponential speed. Actually, Theorem \ref{theo.main} ensures that
	\[
		\lim_{h\to0} {\rm card}\{j\in\NN^*\,, \lambda_{j}(h)\leq \nu_1(h)\} = +\infty\,,\qquad \lim_{h\to0}k(h)=+\infty\,.
	\]
	This accumulation of eigenvalues near $0$ in the semiclassical is related to the fact that the corresponding eigenfunctions are close to be functions in the Segal–Bargmann space $\mathscr{B}^2(\CC)$ which is of infinite dimension.
\end{remark}

\begin{remark}\label{rem.theta0}
	The constant $\theta_0$ introduced in Theorem \ref{theo.main} does not depend on $k\in\NN^*$ and is equal to $1$ in the radial case.
	We conjecture that the upper bounds in Theorem \ref{theo.main} \eqref{theo.upb} are optimal, that is  $\theta_0=1$ in the general case. 

	More precisely, let $\Omega$ be a $\mathscr{C}^2$ set satisfying {\rm Assumption \ref{asum:setom}}. We introduce
\[
	\mathcal{M}_\Omega := \{G:\overline\Omega\to \overline{ D(0,1)}\mbox{ biholomorphic s.t. } c_1\leq |G'(\cdot)|\leq c_2\,, \mbox{ for some }c_1,c_2>0\}\,.
\]
	Remark that $\mathcal{M}_\Omega$ is non-empty by the Riemann mapping theorem. Then, the constant $\theta_0$  can be defined by
\[\theta_0 := \frac{\min_{\pa D(0,1)}|(G^{-1})'(y)|\pa_{\n}\phi(G^{-1}(y))}{\max_{\pa D(0,1)}|(G^{-1})'(y)|\pa_{\n}\phi(G^{-1}(y))}\in(0,1]\,,\]
for some $G\in\mathcal{M}_\Omega$ (see Lemma \ref{lem:lowbound_4}).\footnote{
 We can even choose 
\[\tilde\theta_0 := \sup_{G\in \mathcal{M}_\Omega}\frac{\min_{\pa D(0,1)}|(G^{-1})'(y)|\pa_{\n}\phi(G^{-1}(y))}{\max_{\pa D(0,1)}|(G^{-1})'(y)|\pa_{\n}\phi(G^{-1}(y))}\,.\]
}

Actually, we can even see from our analysis that there is a class of magnetic fields for which $\theta_{0}=1$. 
We introduce

\begin{equation}\begin{split}
\mathcal{B} := \big\{& \check B\in \mathscr{C}^\infty(\overline{D(0,1)}; \RR_{+}^*)\,,\\
&	\exists \check\phi\in H^1_0(D(0,1);\RR)\,, \Delta  \check \phi = \check B\mbox{ on }D(0,1)\,, \pa^2_{\n s} \check \phi = 0\,\mbox{ on  }\partial D(0,1)\}\,.
\end{split}\end{equation}
Here, $\pa_s$ denotes the tangential derivative. 
%
Then, for any $\check B\in\mathcal{B}$ and $G\in\mathcal{M}_\Omega$, we get $\theta_0 = 1$ and
\[\liminf_{h\to 0} e^{-2\phi_{\min}/h}h^{k-1}\lambda_{k}(h)\geq C_{\rm sup}(k)\,,\]
for the magnetic field $B = |G'(z)|^2\check B\circ G(z)$. This follows from the fact that the function
\[
	y\in \partial D(0,1)\mapsto|(G^{-1})'(y)|\pa_{\n}\phi(G^{-1}(y))
\]
is constant. Here, $\phi$ is defined in \eqref{eq:0modeFlux}.
\end{remark}

Using the Riemann mapping theorem, we can deduce the following lower bound for $\Omega$ with Dini-continuous boundary. Its proof can be found in Section \ref{sec.Riemann}.
\begin{corollary}\label{cor.main}
Assume that $\Omega$ is bounded, simply connected and that $\partial\Omega$ is Dini-continuous. Assume also \eqref{eq.a1}--\eqref{eq.a3} of Theorem \ref{theo.main}. Let $k\in\mathbb{N}^*$. Then, there exist $c_k,C_k>0$ and $h_{0}>0$ such that, for all $h\in(0,h_{0})$,
\[c_kh^{-k+1}e^{2\phi_{\min}/h}\leq\lambda_{k}(h)\leq C_kh^{-k+1}e^{2\phi_{\min}/h}\,.\]
\end{corollary}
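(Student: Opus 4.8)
The plan is to deduce the corollary from Theorem \ref{theo.main} by transporting the problem to the unit disk via a Riemann map and checking that the hypotheses (a)--(c) of the theorem are preserved, while only the constant $\theta_0$ (hence the ratio $C_{\inf}/C_{\sup}$) is possibly degraded. First I would fix, by the Riemann mapping theorem, a biholomorphism $G:\Omega\to D(0,1)$; since $\partial\Omega$ is Dini-continuous, classical results on the boundary behaviour of conformal maps (the Kellogg--Warschawski theorem, as recalled in \cite[Section 3.3]{P92}) give that $G$ extends to a homeomorphism $\overline\Omega\to\overline{D(0,1)}$ and that $G'$ extends continuously to $\overline\Omega$ with $0<c_1\le |G'|\le c_2$ on $\overline\Omega$; in particular $G\in\mathcal{M}_\Omega$, so $\mathcal{M}_\Omega\neq\emptyset$ and the quantity $\theta_0$ of Remark \ref{rem.theta0} is well defined and lies in $(0,1]$.

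Next I would observe that the magnetic field $B$ and its potential $\phi$ from \eqref{eq:0modeFlux} pull back nicely: writing $\check\phi:=\phi\circ G^{-1}$ on $D(0,1)$, conformal invariance of the Laplacian gives $\Delta\check\phi = |(G^{-1})'|^2\,(\Delta\phi)\circ G^{-1} = |(G^{-1})'|^2\,B\circ G^{-1}=:\check B$, which is smooth and strictly positive on $\overline{D(0,1)}$ because $0<c_1\le|G'|\le c_2$; moreover $\check\phi=0$ on $\partial D(0,1)$, so $\check\phi$ is exactly the potential associated with $\check B$ on the disk. The minimum of $\check\phi$ is attained at the unique point $G(x_{\min})$, and since $G$ is a biholomorphism with non-vanishing derivative the non-degeneracy of the Hessian is preserved (a direct chain-rule computation: at a critical point, $\mathsf{Hess}\,\check\phi = {}^tDG^{-1}\,(\mathsf{Hess}\,\phi)\,DG^{-1}$, which stays positive definite). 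Thus all three hypotheses (a)--(c) hold for $(\check B, D(0,1))$, and $\phi_{\min}$ is unchanged. However, the disk need not be radial, so I cannot invoke the last part of Theorem \ref{theo.main}; instead I apply parts \eqref{theo.upb} and \eqref{theo.lob} of Theorem \ref{theo.main} directly to $\DP$ on $\Omega$ (the operator itself, not its conformal image), which already yield
\[
C_{\sup}(k)\,\theta_0\, h^{-k+1}e^{2\phi_{\min}/h}(1+o(1))\le \lambda_k(h)\le C_{\sup}(k)\, h^{-k+1}e^{2\phi_{\min}/h}(1+o(1)).
\]
Here the role of the Riemann map is exactly to guarantee, via Lemma \ref{lem:lowbound_4}, that $\theta_0>0$: the numerator and denominator in the definition of $\theta_0$ are $\min$ and $\max$ over the compact set $\partial D(0,1)$ of the continuous, strictly positive function $y\mapsto |(G^{-1})'(y)|\,\partial_{\n}\phi(G^{-1}(y))$ — strict positivity on the boundary coming from Hopf's lemma applied to $\phi$ (valid since $\Omega$ is, say, $\mathscr{C}^2$, or more generally by the Dini-continuous-boundary regularity that makes $\partial_{\n}\phi$ well defined and positive) together with $|(G^{-1})'|\ge 1/c_2>0$.

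Finally I would set $C_k:=2\,C_{\sup}(k)$ and $c_k:=\tfrac12\,C_{\sup}(k)\,\theta_0$; since $C_{\sup}(k)>0$ (its numerator is positive by Remark \ref{rem.hilbertprojhardy} and its denominator is positive because $z^{k-1}\notin\mathcal{P}_{k-2}$ in the Segal--Bargmann space) and $\theta_0>0$, the factor $(1+o(1))$ can be absorbed for $h$ smaller than some $h_0>0$, giving the stated two-sided bound. The main obstacle is the boundary-regularity bookkeeping: one must make sure that Dini-continuity of $\partial\Omega$ is genuinely enough to (i) get $G'$ bounded above and below up to the boundary, (ii) make $\partial_{\n}\phi$ a well-defined, continuous, strictly positive function on $\partial\Omega$ so that Hopf-type positivity applies, and (iii) be compatible with the hypotheses actually used in Theorem \ref{theo.main} and Lemma \ref{lem:lowbound_4} (the theorem is stated for $\mathscr{C}^2$ domains, so one should check that the disk $D(0,1)$ — to which the sharp results are applied — is smooth while the conformal factor only needs to be $\mathscr{C}^\infty$ up to the boundary, which it is). Everything else is a routine conformal change of variables.
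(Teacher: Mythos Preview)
There is a genuine gap. You set up the Riemann map $G:\Omega\to D(0,1)$, carefully pull back $\phi$ and $B$ to $\check\phi,\check B$ on the disk, and verify (a)--(c) there --- but then you abandon all of this and write ``I apply parts \eqref{theo.upb} and \eqref{theo.lob} of Theorem \ref{theo.main} \emph{directly to $\DP$ on $\Omega$}''. That is exactly what you cannot do: Theorem \ref{theo.main} is stated and proved for $\mathscr{C}^2$ domains, and the $\mathscr{C}^2$ hypothesis is used throughout (tubular coordinates in the upper bound, the definition of $\Omega_\delta=F(D(0,1-\delta))$ and the elliptic estimates in the lower bound), not merely in the definition of $\theta_0$. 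The whole content of Corollary \ref{cor.main} is to weaken $\mathscr{C}^2$ to Dini; applying the theorem directly to $\Omega$ is circular.

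The paper's argument uses the Riemann map the way you began to, but then compares \emph{Rayleigh quotients} rather than operators. With $F=G^{-1}$ and $\check v=v\circ F$, $\check\phi=\phi\circ F$, one has
\[
\frac{\int_{\Omega} e^{-2\phi/h}|\partial_{\overline{z}}v|^2\dd x}{\int_{\Omega}e^{-2\phi/h}|v|^2\dd x}
=\frac{\int_{D(0,1)}e^{-2\check\phi/h}|\partial_{\overline{y}}\check v|^2\dd y}{\int_{D(0,1)}e^{-2\check\phi/h}|\check v|^2|F'|^2\dd y}\,,
\]
which is sandwiched between $c_2^{-2}$ and $c_1^{-2}$ times the corresponding quotient on $D(0,1)$ with weight $e^{-2\check\phi/h}$. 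By the min-max principle this sandwiches $\lambda_k(h)$ between fixed multiples of the $k$-th eigenvalue of the Pauli problem on the \emph{smooth} disk with magnetic field $\check B$, to which Theorem \ref{theo.main} now legitimately applies (your verification of (a)--(c) for $(\check B,D(0,1))$ is then the right ingredient). A minor point: your claim that $\check B$ is ``smooth on $\overline{D(0,1)}$'' overreaches --- Dini regularity gives $F'$ continuous up to the boundary, not $\mathscr{C}^\infty$ --- but positivity and the needed interior regularity of $\check\phi$ are unaffected.
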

\begin{remark}
	Note also that our proof ensures that the constants $C_k,c_k$ can be chosen  so that $C_k/c_k$ does not depend on $k\in \NN^*$.
\end{remark}

Our results can be used to describe the spectrum of the magnetic Laplacian with constant magnetic field (see Remark \ref{rem.nondeg}). 
\begin{corollary}\label{cor.ML}
Assume that $\Omega$ is bounded, strictly convex and that $\partial\Omega$ is Dini-continuous. Assume also that \eqref{eq.a1}--\eqref{eq.a3} of Theorem \ref{theo.main}  hold and that $B$ is constant.

Then, the $k$-th eigenvalue of $(-ih\nabla-A)^2$ with Dirichlet boundary conditions, denoted by $\mu_{k}(h)$, satisfies, for some $c,C>0$ and $h$ small enough,
\begin{equation}\label{eq.B=ct}
B h+ch^{-k+1}e^{2\phi_{\min}/h}\leq \mu_{k}(h)\leq B h+Ch^{-k+1}e^{2\phi_{\min}/h}\,.
\end{equation}
In particular, the first eigenvalues of the magnetic Laplacian are simple in the semiclassical limit. 
\end{corollary}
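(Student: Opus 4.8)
The plan is to reduce the statement to Corollary~\ref{cor.main} via the elementary observation that, for a constant magnetic field, the magnetic Laplacian is a scalar shift of $\mathscr{L}^-_{h}$. Indeed, by \eqref{eq:defDP} one has $\mathscr{L}^-_{h}=|\p-A|^2-hB$, so on $H^2(\Omega;\CC)\cap H^1_0(\Omega;\CC)$ the Dirichlet magnetic Laplacian $(-ih\nabla-A)^2=|\p-A|^2$ equals $\mathscr{L}^-_{h}+hB$. When $B$ is constant, $hB=Bh$ is a number, hence $(-ih\nabla-A)^2=\mathscr{L}^-_{h}+Bh$ as self-adjoint operators with the \emph{same} form domain $H^1_0(\Omega;\CC)$ and operator domain $H^2(\Omega;\CC)\cap H^1_0(\Omega;\CC)$. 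Consequently, if $\nu_k(h)$ denotes the $k$-th eigenvalue of $\mathscr{L}^-_{h}$ (counted with multiplicity), the min--max characterisation gives $\mu_k(h)=\nu_k(h)+Bh$ for every $k\in\NN^*$ and every $h>0$.

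Next I would check that Corollary~\ref{cor.main} is applicable. Since $B$ is a positive constant and $\Omega$ is strictly convex, Remark~\ref{rem.nondeg} shows that $\phi$ has a unique, non-degenerate minimum, so hypotheses \eqref{eq.a1}--\eqref{eq.a3} of Theorem~\ref{theo.main} are met; $\partial\Omega$ is Dini-continuous by assumption. Moreover, for $h$ small the $k$ lowest eigenvalues of $\DP$ are exactly those of $\mathscr{L}^-_{h}$ (as stated with Theorem~\ref{theo.main}; the point is that $\mathscr{L}^+_{h}\geq hB_{0}$ while, by Corollary~\ref{cor.main}, $\nu_k(h)$ is exponentially small), i.e.\ $\lambda_k(h)=\nu_k(h)$. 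Hence Corollary~\ref{cor.main} provides $c_k,C_k,h_0>0$ with $c_k h^{-k+1}e^{2\phi_{\min}/h}\le\nu_k(h)\le C_k h^{-k+1}e^{2\phi_{\min}/h}$ for $h\in(0,h_0)$. Substituting into $\mu_k(h)=\nu_k(h)+Bh$ yields \eqref{eq.B=ct} with $c=c_k$, $C=C_k$.

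Finally, for the simplicity claim I would compare consecutive eigenvalues. Fix $k\in\NN^*$. For each $j\in\{1,\dots,k\}$ and $h$ small, the lower bound for $\nu_{j+1}(h)$ and the upper bound for $\nu_j(h)$ give $\nu_{j+1}(h)/\nu_j(h)\ge (c_{j+1}/C_j)\,h^{-1}\to+\infty$ as $h\to0$, so that $\nu_1(h)<\nu_2(h)<\cdots<\nu_{k+1}(h)$ for $h$ small enough; adding the constant $Bh$ preserves these strict inequalities, so $\mu_1(h),\dots,\mu_k(h)$ are simple eigenvalues of the magnetic Laplacian once $h$ is small enough.

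There is no genuine obstacle here; the argument is pure bookkeeping on top of Corollary~\ref{cor.main}. The only points deserving care are that the form and operator domains of the Dirichlet magnetic Laplacian must be literally those of $\mathscr{L}^-_{h}$ (so the shift by the scalar $Bh$ is exact and min--max transfers verbatim), and that the regularity hypotheses line up: Dini continuity of $\partial\Omega$ suffices both to invoke Corollary~\ref{cor.main} and to get the identification $\lambda_k(h)=\nu_k(h)$ for small $h$ (the latter only uses $B>0$ together with the exponential upper bound on $\nu_k(h)$, not the $\mathscr{C}^2$ assumption of Theorem~\ref{theo.main}).
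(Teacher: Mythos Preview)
Your argument is correct and is precisely the intended one: the paper does not spell out a separate proof of Corollary~\ref{cor.ML}, because it is meant to follow immediately from Corollary~\ref{cor.main} via the scalar shift $|\p-A|^2=\mathscr{L}^-_{h}+Bh$ when $B$ is constant, together with the observation that the lowest Pauli eigenvalues come from $\mathscr{L}^-_{h}$. Your handling of the identification $\lambda_k(h)=\nu_k(h)$ and of the simplicity via the ratio $\nu_{j+1}(h)/\nu_j(h)\to\infty$ is exactly right.
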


\subsubsection{Relations with the literature}\label{sec.literature}
Let us compare our result with the existing literature.
\begin{enumerate}[\rm i.]
\item When $B=1$, our results improve the bound obtained by Erd\"os for $\lambda_{1}(h)$ \cite[Theorem 1.1 \& Proposition A.1]{E96} and also the bound by Helffer and Morame \cite[Propositions 4.1 and 4.4]{HelfferMorame}. Indeed, \eqref{eq.B=ct} gives us the optimal behavior of the remainder. When $B=1$ and $\Omega=D(0,1)$, the asymptotic expansion of the next eigenvalues is considered in \cite[Theorem 5.1, c)]{HP17}. Note that, in this case, $\phi=\frac{|x|^2-1}{4}$ and that Theorem \ref{theo.main} allows to recover \cite[Theorem 5.1, c)]{HP17} by considering radial magnetic fields.
\item In \cite{HP17} (simply connected case) and \cite{HP17b} (general case), Helffer and Sundqvist have proved, under assumption \eqref{eq.a1}, that
\[\lim_{h\to 0}h\ln\lambda_{1}(h)=2\phi_{\min}\,.\]
Moreover, under the assumptions \eqref{eq.a1}, \eqref{eq.a2} and \eqref{eq.a3} of Theorem \ref{theo.main}, their theorem \cite[Theorem 4.2]{HP17}, implies the following upper bound for the \emph{first eigenvalue}
\[\lambda_{1}(h)\leq 4\Phi\det(\mathsf{Hess}_{x_{\min}}\phi)^{\frac{1}{2}}(1+o(1))e^{2\phi_{\min}/h}
\,.\]
Note that Theorem \ref{theo.main}  \eqref{theo.upb} provides a better upper bound even for $k=1$.

They also establish the following lower bound by means of rough considerations:
\[\forall h>0\,,\quad\lambda_{1}(h)\geq h^2\lambda^\mathsf{Dir}_{1}(\Omega)e^{2\phi_{\min}/h}\,,\]
where $\lambda_1^{\mathrm{Dir}}(\Omega)$ is the first eigenvalue
of the corresponding magnetic Dirichlet Laplacian. This estimate is itself an improvement of \cite[Theorem 2.1]{EKP16}. 

Corollary \ref{cor.main} is an optimal improvement in terms of the order of magnitude of the pre-factor of the exponential. It also improves the existing results by considering the excited eigenvalues.
Describing the behavior of the prefactor is not a purely technical question. Indeed, it is directly related to the simplicity of the eigenvalues and even governs the asymptotic behavior of the spectral gaps. This simplicity was not known before, except in the case of constant magnetic field on a disk.
\item The problem of estimating the spectrum of the Dirichlet-Pauli operator is closely connected to the spectral analysis of the Witten Laplacian (see for instance \cite[Remark 1.6]{HP17} and the references therein). For example, in this context, the ground state energy is
\begin{equation}\label{eq.W}
\min_{\underset{v\neq 0}{v\in H^1_{0}(\Omega)}}\frac{\int_{\Omega}|h\nabla v|^2e^{-2\phi/h}\dd x}{\int_{\Omega}e^{-2\phi/h}|v|^2\dd x}\,,
\end{equation}
whereas, as known in the literature  in the present paper, we will focus on
\begin{equation}\label{eq.Wbis}
\min_{\underset{v\neq 0}{v\in H^1_{0}(\Omega)}}\frac{\int_{\Omega}|h(\partial_{x_{1}}+i\partial_{x_{2}})v|^2e^{-2\phi/h}\dd x}{\int_{\Omega}e^{-2\phi/h}|v|^2\dd x}\,,
\end{equation}
(see also Lemma \ref{lem.lambdaholo}).
Considering \emph{real-valued} functions $v$ in \eqref{eq.Wbis} reduces to \eqref{eq.W}. In this sense, \eqref{eq.Wbis} gives rise to a \enquote{less elliptic} minimization problem. 
\end{enumerate}

\subsection{About the intuition and strategy of the proof}
In this paragraph we discuss the main lines of our strategy. It is
intended to reveal the intuition behind some of our proofs. We will focus mostly on
the ground-state energy, which is given by \eqref{eq:eigDP} as
\begin{align}
	\label{eq:1}
	\lambda_1(h)=
	\min_{\psi\in H^1_0(\Omega; \CC^2)\setminus\{0\}}		
	\frac{\norm{\sigma\cdot(\p-A)\psi}_{L^2(\Omega)}^2}
	{\norm{\psi}_{L^2(\Omega)}^2}\,.
\end{align}
It is easy to guess from \eqref{eq:defDP} that the ground state energy
has to have the form $\psi=(u,0)^{\rm{T}}$. This is consistent with
the physical intuition that, for low energies, the spin of the
particle should be parallel to the magnetic field.

The variational problem above can be re-written by means of a
suitable transformation as
\begin{align}
	\label{eq:2}
	\lambda_1(h)=h^2 \min_{\underset{v\neq 0}{v\in H^1_{0}(\Omega)}}
	\frac{\int_{\Omega}|2 \partial_{\overline{z}}v|^2e^{-2\phi/h}\dd
		x} {\int_{\Omega} |v|^2 e^{-2\phi/h}\dd x}=: h^2 \min_{\underset{v\neq 0}{v\in H^1_{0}(\Omega)}}
	\frac{\num(v,\phi)}{\den(v,\phi)}  \,,
\end{align}
where, $\partial_{\overline{z}}=(\partial_1+i\partial_2)/2$ and $\phi$
is the unique solution to $\Delta \phi=B$ in $\Omega$ with
Dirichlet boundary conditions (see Definition \ref{defi.gauge}). This connection between the
spectral analysis of the Dirichlet-Pauli and
Cauchy-Riemann operators is known in the literature  (see
e.g. \cite{E96,HP17,BR18} and \cite{T92}), and we
describe it in Section \ref{sec.fund}.

In order to study the problem in \eqref{eq:2} it is helpful to consider
the following heuristics concerning  $\num(v,\phi)$.
\begin{observation}\label{obs1}
	A minimizer $v_h$ {\it wants to be } an analytic function in the
	interior of $\Omega$ but, due to the boundary conditions, has to have
	a different behaviour close to the boundary. So, if we set $\Omega_\delta:=\{x\in\Omega,
	\mathsf{dist}(x,\partial\Omega)\ge \delta\}$ for $\delta>0$, we expect that
	$v_h$ behaves almost as an analytic
	function on $U$ with  $\Omega_\delta \subset U\subset \Omega $. Moreover, this tendency
	is enhanced in the semiclassical limit when the presence of the
	magnetic field becomes stronger. Hence, we also expect that $\delta\to 0$
	as $h\to 0$ in some way.
\end{observation}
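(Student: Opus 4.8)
The plan is to substantiate Observation~\ref{obs1} in three steps, always working with the Cauchy--Riemann form \eqref{eq:2}. First, one records that $\num(v,\phi)=\int_{\Omega}|2\partial_{\overline{z}}v|^2e^{-2\phi/h}\dd x\geq 0$, with equality exactly when $v\in\mathscr{H}(\Omega)$. But $\Omega$ being simply connected, a holomorphic function on $\Omega$ that is continuous up to $\partial\Omega$ and vanishes there is identically zero (identity theorem, or the maximum principle for $|v|$). So \emph{no nonzero competitor in $H^1_0(\Omega)$ kills the numerator}: the minimisation \eqref{eq:2} is a genuine tension between holomorphy in the bulk and the Dirichlet condition, and a minimiser $v_h$ can be only \emph{approximately} holomorphic.

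Second, I would locate where $v_h$ is close to holomorphic. With $\den(v_h,\phi)=1$, the minimum value is $\num(v_h,\phi)=h^{-2}\lambda_1(h)$, which is exponentially small since $h\ln\lambda_1(h)\to2\phi_{\min}<0$. On the other hand $\phi$ is subharmonic with $\phi|_{\partial\Omega}=0$ and minimum $\phi_{\min}$ attained in $\Omega$, and by Hopf's lemma $\phi(x)\asymp-\mathsf{dist}(x,\partial\Omega)$ near $\partial\Omega$; hence $e^{-2\phi/h}\asymp1$ on an $\mathscr{O}(h)$-neighbourhood of $\partial\Omega$, while it reaches size $e^{2|\phi_{\min}|/h}$ at $x_{\min}$. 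The smallness of $\int_{\Omega}|\partial_{\overline{z}}v_h|^2e^{-2\phi/h}\dd x$ thus forces $\partial_{\overline{z}}v_h$ to be extremely small on $\Omega_\delta$, where the weight is huge, while leaving it essentially unconstrained in the thin collar $\Omega\setminus\Omega_\delta$. To turn ``$\partial_{\overline{z}}v_h$ small on $\Omega_\delta$'' into ``$v_h$ close to a holomorphic function on $U\supset\Omega_\delta$'', I would solve the inhomogeneous equation $\partial_{\overline{z}}w=\partial_{\overline{z}}v_h$ on $\Omega_{\delta/2}$ via the Cauchy transform, with $\|w\|_{L^2(\Omega_{\delta/2})}\lesssim\|\partial_{\overline{z}}v_h\|_{L^2(\Omega_{\delta/2})}$; then $v_h-w\in\mathscr{H}(\Omega_{\delta/2})$ is exponentially close to $v_h$ there. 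That is the precise content of ``$v_h$ behaves almost as an analytic function on $U$''.

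Third, the collar width $\delta=\delta(h)$ is fixed by a balance, visible already on trial states: take $v=\chi_\delta f$ with $f\in\mathscr{H}(\Omega)$ and $\chi_\delta=1$ on $\Omega_{2\delta}$, $\chi_\delta=0$ off $\Omega_\delta$, $|\nabla\chi_\delta|\lesssim\delta^{-1}$. Since $\partial_{\overline{z}}f=0$, one gets $\num(\chi_\delta f,\phi)\lesssim\delta^{-2}\big(\sup_{\Omega_\delta\setminus\Omega_{2\delta}}e^{-2\phi/h}\big)\|f\|^2\lesssim\delta^{-2}e^{C\delta/h}\|f\|^2$, whereas $\den(\chi_\delta f,\phi)\gtrsim h\,e^{2|\phi_{\min}|/h}|f(x_{\min})|^2$ by Laplace's method at $x_{\min}$. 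The parasitic factor $e^{C\delta/h}$ is controlled precisely when $\delta/h$ stays bounded, i.e.\ $\delta=\mathscr{O}(h)$, which is exactly the mechanism stated in the Observation --- a larger $1/h$ forces a thinner collar. With $\delta\sim h$ this already yields $\lambda_1(h)\lesssim h^{-1}e^{2\phi_{\min}/h}$, one power of $h$ away from the sharp answer of Theorem~\ref{theo.main}.

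The step I expect to be the main obstacle is \emph{making the boundary-layer analysis quantitatively sharp}. The crude cutoff above loses a power of $h$, and there is no reason a single holomorphic $f$ is the optimal competitor; extracting the exact prefactor $h^{-k+1}$ --- and the matching lower bound --- forces one to build trial states and carry out Agmon-type localisation estimates adapted to the conformal geometry near $x_{\min}$ and near $\partial\Omega$, and to compare $v_h$ not with one holomorphic function but with the whole of $\mathscr{H}(\Omega)$: the bulk contribution weighed in the Segal--Bargmann norm $\Nb$ and the collar in the Hardy norm $\Nh$. This passage to the weighted holomorphic framework is where the genuine work lies; Observation~\ref{obs1} is only its skeleton.
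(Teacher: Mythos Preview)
Your heuristic justification is sound and captures the essential tension (holomorphy in the bulk vs.\ Dirichlet on $\partial\Omega$) correctly. However, two points differ from the paper's route in ways that matter downstream.

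First, you propose to approximate $v_h$ by a holomorphic function via the Cauchy transform, solving $\partial_{\overline z}w=\partial_{\overline z}v_h$ on $\Omega_{\delta/2}$. The paper instead uses the \emph{weighted} orthogonal projection $\widetilde\Pi_{h,\delta}$ onto $\ker\partial_{\overline z}$ in $L^2(e^{-2\phi/h}\dd x)$, combined with elliptic estimates for the magnetic Cauchy--Riemann operator (Theorem~\ref{prop.elliptic}). The advantage of the paper's approach is that it directly yields control of $(\mathrm{Id}-\widetilde\Pi_{h,\delta})v_h$ in the weighted $L^2$ norm, in $H^1$, and crucially on the \emph{boundary trace} $L^2(\partial\Omega_\delta)$ (Proposition~\ref{prop.approxH1norm}); these are exactly the norms needed to compare with the Hardy structure. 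Your unweighted Cauchy transform bound $\|w\|_{L^2}\lesssim\|\partial_{\overline z}v_h\|_{L^2}$ does not immediately control the holomorphic approximation in the weighted norm or on $\partial\Omega_\delta$, and since $v_h$ itself is exponentially small in the unweighted norm (cf.\ Lemma~\ref{lem.vhL2}), the relative smallness of $w$ needs the weighted setting to be meaningful.

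Second, your scale $\delta=\mathscr{O}(h)$ comes from the naive cutoff with $|\nabla\chi_\delta|\lesssim\delta^{-1}$, and you correctly note this loses a power of $h$. The paper avoids this by taking the \emph{optimal} cutoff profile of Lemma~\ref{lem.opt}, whose boundary-layer energy is $\sim h^{-1}$ rather than $\delta^{-2}e^{C\delta/h}$, provided $\delta/h\to\infty$. Combined with the need $\delta^2/h\to0$ (to Taylor-expand $\phi$ near the boundary), this gives the regime \eqref{eq:5}, which is the opposite of $\delta=\mathscr{O}(h)$. Concretely the paper takes $\varepsilon=h|\log h|$ for the upper bound and $\delta=h^{3/4}$ for the lower bound. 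So your identification of the scale, while correct for the naive cutoff, points in the wrong direction for the sharp analysis; the Observation's ``$\delta\to0$'' is realised with $\delta\gg h$, not $\delta\lesssim h$.
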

We comment below how we make Observation \ref{obs1} more precise,
for the moment let us just mention that throughout this discussion we work
with $\delta$ such that
\begin{align}
	\label{eq:5}
	\delta^2/h\to 0\quad \mbox{and} \quad\delta/h\to \infty \quad\mbox{as}\quad h\to 0\,.
\end{align}
As a  consequence of  Observation \ref{obs1} we expect that 
\begin{align}
	\label{eq:3}
	\num(v_h,\phi)\sim \int_{T_\delta} |2 \partial_{\overline{z}}v_h|^2e^{-2\phi/h}\dd
	x\,,
\end{align}
where $T_\delta:=\Omega\setminus \Omega_\delta$.

An essential ingredient in our method  is the analysis of 
the minimization problem associated with the RHS of \eqref{eq:3}.
The main ideas go as
follows: Assume first that $\Omega$
is the disk $D(0,1)$.  By writing the integrand $
|\partial_{\overline{z}}v_h|^2e^{-2\phi/h}$ in tubular coordinates
(see Item i. from the proof of Lemma \ref{lem:estiannularUpper} )  and Taylor
expanding $\phi$ around any point at the boundary $\partial\Omega$ we
get,
for $\delta$ satisfying \eqref{eq:5},
\begin{align}
	\label{eq:6}
	\int_{T_\delta} |2 \partial_{\overline{z}}v|^2e^{-2\phi/h}\dd x
	&=(1+o(h))   \int_0^{2\pi} \int_0^\delta e^{2t \pa_\n \phi
		/h}|(\partial_\tau-i\partial_s)v|^2 \dd s \dd \tau\\
	&=:(1+o(h))  J_h(v)
\end{align}
(see also the proof of Lemma \ref{lem:lowbound_annular_energy}), where
$\pa_\n \phi \equiv \pa_\n \phi(s)$ is the normal derivative at the
boundary (see Notation \ref{not.BH}).

Observe that if $\pa_\n \phi$ is a
constant along the boundary, then it equals the flux $\Phi$. In this
case,  as explained in Item iv. of the  proof of
Lemma \ref{lem:lowbound_annular_energy}, the problem of finding
a non-trivial solution of 
\begin{align}
	\label{eq:9}
	\inf_{v\in H^1(T_\delta)} J_h(v)\,\quad \mbox{with}\quad v\!\upharpoonright_{\partial
		\Omega_\delta}=v_\delta\,, v\!\upharpoonright_{\partial
		\Omega}=0\,,
\end{align}
can be reduced to a sum (labeled in the Fourier index) of one-dimensional problems that we solve explicitly in Lemma
\ref{lem.opt}.

For the particular case of $v$ having  only
the non-negative Fourier modes on $\partial \Omega_\delta$ (i.e.,
$v_\delta=\sum_{m\ge 0} \hat{v}_{\delta,m}
e^{ims}$) we find that (see Lemma \ref{lem:lowbound_annular_energy}) 
\begin{align}
	\label{eq:8}
	J_h(v)\ge \frac{\Phi/h}{1-e^{-2\delta\Phi/h}} \norm{v}_{L^2(\partial
		D(0, 1-\delta))}^2=(1+o(h)) 2\Phi/h \norm{v}_{L^2(\partial
		D(0, 1-\delta))}^2\, 
\end{align}
where the last equality is a trivial consequence of
\eqref{eq:5}. Moreover, by Lemma \ref{lem.opt}, the latter inequality
is saturated when $v_\delta= \hat{v}_{\delta,0}$. Concerning, the assumption on $v$, recall that
analytic functions on the disk have only Fourier modes for $m\ge 0$.

Notice that if $B$ is rotationally symmetric  $\pa_\n
\phi$ is constant. If $\pa_\n
\phi$ is not a constant we can give  a suitable estimate using that
$\min_{\partial \Omega} \pa_\n
\phi>0$. We extend the previous analysis to more general geometries by
using the Riemann mapping theorem.

There is another important point to take into account, this
time concerning $\den(v,\phi)$.
\begin{observation}\label{obs2}
	Recall that $\phi\le 0$ has an absolute, non-degenerate, minimum at $x_{\rm
		min}$. Hence, the weighted norm of $v_h$, $\den(v,\phi)$, should have a tendency to
	concentrate  around $x_{\rm min}$. This is made precise in Lemma
	\ref{lem.normL2} below.  Moreover, observe that using Laplace's method, one
	formally gets that, as $h\to 0$,
	\begin{align}
		\label{eq:4}
		\den(v,\phi)\sim  h \pi |v(x_{\rm min})|^2 e^{-2\phi_{\min}/h} (\det \mathsf{Hess}_{x_{\min}}\phi)^{-\frac{1}{2}}\,.
	\end{align}
\end{observation}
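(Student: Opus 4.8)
\textbf{Plan of proof for Observation \ref{obs2} and the Laplace asymptotics \eqref{eq:4}.}
The statement to justify is the heuristic \eqref{eq:4}, namely that for a fixed (say continuous, nonzero at $x_{\min}$) test function $v$ one has
\[
	\den(v,\phi)=\int_\Omega |v|^2 e^{-2\phi/h}\,\dd x \sim h\pi |v(x_{\min})|^2 e^{-2\phi_{\min}/h}\bigl(\det\mathsf{Hess}_{x_{\min}}\phi\bigr)^{-1/2}\,,\qquad h\to 0\,,
\]
together with the qualitative concentration statement that will appear as Lemma \ref{lem.normL2}. The natural route is the standard Laplace method adapted to a weight with an interior minimum. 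First I would factor out the leading exponential: write $e^{-2\phi/h}=e^{-2\phi_{\min}/h}e^{-2(\phi-\phi_{\min})/h}$, so that $\psi:=\phi-\phi_{\min}\ge 0$ vanishes only at $x_{\min}$, and the claim reduces to
\[
	\int_\Omega |v|^2 e^{-2\psi/h}\,\dd x \sim h\pi |v(x_{\min})|^2 \bigl(\det\mathsf{Hess}_{x_{\min}}\phi\bigr)^{-1/2}\,.
\]

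Second, I would localize. By assumption \eqref{eq.a1}--\eqref{eq.a3} of Theorem \ref{theo.main}, $x_{\min}$ is the unique, non-degenerate minimum of $\phi$ (hence of $\psi$); since $\psi$ is continuous on the compact $\overline\Omega$ and positive away from $x_{\min}$, for any $r>0$ there is $c(r)>0$ with $\psi\ge c(r)$ on $\Omega\setminus D(x_{\min},r)$. Therefore the contribution of $\Omega\setminus D(x_{\min},r)$ to the integral is $\mathscr{O}(e^{-2c(r)/h})$, which is $o(h)$ and in particular exponentially small compared to the expected main term of size $h$. This already gives the concentration statement of Lemma \ref{lem.normL2}: the mass of $|v|^2 e^{-2\phi/h}$ lives, up to exponentially small error, in an arbitrarily small ball around $x_{\min}$. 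On the remaining ball $D(x_{\min},r)$ I Taylor-expand: $\psi(x)=\tfrac12\mathsf{Hess}_{x_{\min}}\phi\,(x-x_{\min},x-x_{\min})+\mathscr{O}(|x-x_{\min}|^3)$ and $|v(x)|^2=|v(x_{\min})|^2+o(1)$ as $r\to 0$ (continuity of $v$), so after the change of variables $x-x_{\min}=\sqrt{h}\,y$ the integral becomes $h\,|v(x_{\min})|^2\int_{|y|\le r/\sqrt h} e^{-\mathsf{Hess}_{x_{\min}}\phi(y,y)}\dd y\,(1+o(1))$; letting $h\to 0$ first and then $r\to 0$, and using the Gaussian integral $\int_{\RR^2}e^{-\mathsf{Hess}_{x_{\min}}\phi(y,y)}\dd y=\pi\,(\det\mathsf{Hess}_{x_{\min}}\phi)^{-1/2}$, yields the claimed equivalent. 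Control of the cubic remainder is routine: $e^{\mathscr{O}(|x-x_{\min}|^3/h)}=e^{\mathscr{O}(\sqrt h |y|^3)}$, which tends to $1$ uniformly on $|y|\le \eta h^{-1/6}$ and is dominated there, so dominated convergence applies after shrinking $r$.

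Third, a remark on the level of generality actually needed downstream: in the applications (e.g. the proof of the upper bound in Theorem \ref{theo.main}\eqref{theo.upb}), $v=v_h$ will itself depend on $h$, so the clean asymptotic \eqref{eq:4} is used only as the $h$-independent model; what is really invoked is the two-sided estimate
\[
	(1-\epsilon)\,h\pi\,(\det\mathsf{Hess}_{x_{\min}}\phi)^{-1/2} e^{-2\phi_{\min}/h}\,|v(x_{\min})|^2 \;\le\; \den(v,\phi)
\]
for $h$ small, valid for any $v$ continuous near $x_{\min}$, obtained by restricting the integral to a fixed small ball and Taylor expanding from below, together with the matching upper bound when $v$ is, say, bounded. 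These one-sided bounds follow from exactly the same computation, and they are robust enough to survive an $h$-dependent $v_h$ provided $v_h(x_{\min})$ and the local modulus of continuity of $v_h$ near $x_{\min}$ are controlled — which is where the Segal--Bargmann approximation of $v_h$ enters. The only mild obstacle is bookkeeping the error terms so that the cubic Taylor remainder and the oscillation of $|v|^2$ over a ball of radius $\sim\sqrt h$ are both $o(1)$ relative to the Gaussian main term; this is handled by the two-scale choice of radius ($r$ fixed small, then the Gaussian rescaling) described above, and presents no real difficulty.
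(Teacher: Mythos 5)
Your proposal is correct and follows essentially the same scheme the paper uses: factor out $e^{-2\phi_{\min}/h}$, discard the exterior of a small ball via the unique non-degenerate minimum (the paper's \eqref{eq.tayext}), Taylor-expand $\phi$ on the ball (\eqref{eq.tayint}), rescale by $h^{1/2}$ and evaluate the Gaussian integral $\pi(\det\mathsf{Hess}_{x_{\min}}\phi)^{-1/2}$ — which is exactly how the paper makes this heuristic rigorous in Lemma \ref{lem.normL2} and in the computation \eqref{eq.norml2_1} of Lemma \ref{lem:L2esti1}. Your closing remark that the clean asymptotic only serves as a model because $v=v_h$ is $h$-dependent, with the rescaled Segal--Bargmann norm carrying the actual argument, matches the paper's own caveat following \eqref{eq:4}.
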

Observations \ref{obs1} and \ref{obs2} reveal the importance of the behaviour of a
minimizer around the boundary and close to $x_{\rm{min}}$,
respectively. In addition, this behaviour is naturally captured
through the norms $\Nh$ and $\Nb$ given in Definition \ref{not.BH},
which, in turn, provide a natural Hilbert space structure to select
linear independent test functions which are used to estimate the
excited energies.

In order to show our result we give  upper and lower  bound to the
variational problem \eqref{eq:2}. This is done in Sections \ref{sec.3}
and \ref{sec.4}, respectively. Concerning the upper bound: In view of
the previous discussion it is natural to choose a trial function (at
least for the disk, see Remark \ref{rem.naive})
$v=\omega \chi$ where $\omega$ is an analytic function in $\Omega$ and
$\chi$ is such that $\chi\!\upharpoonright_{\Omega_\delta}=1$ and
decays smoothly to zero towards $\partial\Omega$. We pick
$\chi\!\upharpoonright_{T_\delta}$ as an optimizer of the problem
\eqref{eq:9}. For $\lambda_k(h)$, we choose $\omega$ to be a
polynomial of degree $(k-1)$. In particular, for the ground-state
energy, $\omega$ is constant and in view of \eqref{eq:8} and
\eqref{eq:4} we readily see how the claimed upper bound (at least for
the disk with radial magnetic field) is obtained.

As for the lower bound, as a preliminary step, we discuss  in Section \ref{sec.magnCR} 
some ellipticity properties related to the magnetic Cauchy-Riemann
operators. Our main result there is Theorem
\ref{prop.elliptic}. 
It provides elliptic estimates for the magnetic Cauchy-Riemann
operators on the orthogonal of the kernel which consists, up to an
exponential weight, in holomorphic functions. The findings of Section
\ref{sec.magnCR} are crucial to prove  Proposition
\ref{prop.approxH1norm}, which gives estimates on the behaviour
described in Observation
\ref{obs1}. Indeed, Proposition
\ref{prop.approxH1norm}, together with the upper bound, roughly
states that the non-analytic part of $v_h$ on any open set contained
in $\Omega$ is, in the semiclassical limit, exponentially small in a
sufficiently strong norm.  At least for the disk with
radial magnetic field, we can argue on how to get the lower bound
if we assume  $v_h$ to be  analytic on an open set $U$ with
$ \overline{D(0, 1-\delta)}\subset U \subset D(0, 1)$. Notice that \eqref{eq:8} holds. Moreover, by Cauchy's
Theorem we have $2\pi |v_h(x_{\rm min})|^2= 2\pi |v_h(0)|^2\le
(1+o(h))\|v_h\|^2_{\partial D(0, 1-\delta)}$. In this way we see that
the lower bound appears by
combining \eqref{eq:8} and \eqref{eq:4}.

Let us finally remark that actually, since the function $v$ in
\eqref{eq:4} depends on $h$, Laplace's method cannot be performed so
easily. Instead, after the change of scale
$y = \frac{x-x_{\rm min}}{h^{1/2}}$, one has formally the Bargmann
norm appearing:
\begin{align}
	\label{eq:7}
	\den(v,\phi)\sim h  e^{-2\phi_{\min}/h} \int |v(x_{\rm min}+h^{1/2}y)|^2e^{-\mathsf{Hess}_{x_{\min}}\phi (y,y)} \dd y\,.
\end{align}
Ultimately, in the case of the disk with radial magnetic field, Problem \ref{eq:2} reduces formally to
\begin{align}
	\label{eq:8}
	\lambda_1(h) \gtrsim e^{2\phi_{\min}/h}  \inf_{\underset{v\neq 0}{v\in \mathscr{H}(\Omega)}} 2\left(\frac{ \Nh(v)}{\Nb(v(x_{\rm min}+h^{1/2}\cdot))}\right)^2\,,
\end{align}
which can be computed easily due to the orthogonality of the polynomials $(z^{n})_{n\geq0}$ in the Hilbert spaces $\mathscr{H}^2(\Omega)$ and $\mathscr{B}^2(\CC)$ (see Remark \ref{rem.sym}).
Of course, special attention has to be paid on the domains of integration and  the sets where the holomorphic tests functions live.
In the non-radial case however, we strongly use the multi-scale structure of \eqref{eq:8} to get the result of Theorem \ref{theo.main} (see Section \ref{sec.final}). Note that the constant $\theta_0$ of Theorem \ref{theo.main} which appears in the computation of \eqref{eq:8} somehow measures a symmetry breaking rate (see Remark \ref{rem.theta0} and Lemma \ref{lem:lowbound_4}).

	\section{Change of gauge}\label{sec.fund}
		The following result allows to remove the magnetic field up to sandwiching the Dirac operator with a suitable matrix.
		\begin{proposition}\label{prop:zeromode}
			We have 
			\be\label{eq:facto}
				e^{\sigma_3\phi/h}\sigma\cdot \p e^{\sigma_3\phi/h} = \sigma\cdot(\p-A)\,,
			\ee
			 as operators acting on $H^1(\Omega, \CC^2)$ functions.
		\end{proposition}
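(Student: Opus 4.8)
The plan is to prove \eqref{eq:facto} by a direct computation, using only two elementary facts: that $\sigma_3$ anticommutes with $\sigma_1$ and $\sigma_2$ (hence $e^{\sigma_3\phi/h}\sigma_j=\sigma_je^{-\sigma_3\phi/h}$ for $j\in\{1,2\}$, since $\sigma_je^{\sigma_3\phi/h}\sigma_j^{-1}=e^{\sigma_j\sigma_3\sigma_j\phi/h}=e^{-\sigma_3\phi/h}$), and a Leibniz rule for $\p=-ih\nabla$. First I would record the algebraic identities $\sigma_1\sigma_3=-i\sigma_2$ and $\sigma_2\sigma_3=i\sigma_1$, and note that since $\phi\in\mathscr{C}^\infty(\overline\Omega)$ the matrix-valued multiplier $e^{\sigma_3\phi/h}$ is Lipschitz on $\overline\Omega$ with bounded first derivatives, so it maps $H^1(\Omega,\CC^2)$ into itself and the composition in \eqref{eq:facto} is meaningful on that space.

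The core step is the scalar conjugation identity, for each momentum component and $j\in\{1,2\}$,
\[
e^{-\sigma_3\phi/h}\,\p_j\,e^{\sigma_3\phi/h}=\p_j-i\sigma_3\,\partial_j\phi\,,
\]
which follows from $\p_j\big(e^{\sigma_3\phi/h}f\big)=e^{\sigma_3\phi/h}\p_j f-i\sigma_3(\partial_j\phi)\,e^{\sigma_3\phi/h}f$ (valid for $f\in H^1(\Omega,\CC^2)$, noting that $e^{\sigma_3\phi/h}$ and $\sigma_3$ commute). Combining this with $e^{\sigma_3\phi/h}\sigma_j=\sigma_j e^{-\sigma_3\phi/h}$ yields
\[
e^{\sigma_3\phi/h}(\sigma\cdot\p)\,e^{\sigma_3\phi/h}=\sum_{j=1}^{2}\sigma_j\,e^{-\sigma_3\phi/h}\,\p_j\,e^{\sigma_3\phi/h}=\sigma\cdot\p-i\sum_{j=1}^{2}\sigma_j\sigma_3\,\partial_j\phi\,.
\]
Using $\sigma_1\sigma_3=-i\sigma_2$ and $\sigma_2\sigma_3=i\sigma_1$, the last sum becomes $-\sigma_2\,\partial_1\phi+\sigma_1\,\partial_2\phi=-(\sigma_1 A_1+\sigma_2 A_2)=-\sigma\cdot A$, because $A=(-\partial_2\phi,\partial_1\phi)^T$ by Definition \ref{defi.gauge}. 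Hence $e^{\sigma_3\phi/h}(\sigma\cdot\p)e^{\sigma_3\phi/h}=\sigma\cdot\p-\sigma\cdot A=\sigma\cdot(\p-A)$, which is exactly \eqref{eq:facto}.

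There is no genuine obstacle here: the statement is an identity of first-order differential operators and its proof is a short computation. The only points deserving care are the bookkeeping of signs in the Pauli products and the precise functional-analytic sense of the identity; for the latter one checks it on $\mathscr{C}^\infty$ spinors (or in $\mathcal{D}'(\Omega)$) and concludes on $H^1(\Omega,\CC^2)$ by the mapping property noted above. Equivalently, one may verify \eqref{eq:facto} entrywise: writing $\sigma\cdot\p$ and $\sigma\cdot(\p-A)$ as off-diagonal $2\times2$ matrices with entries $-2ih\partial_z$ and $-2ih\partial_{\overline z}$, and using $e^{\pm\phi/h}\partial_z\big(e^{\mp\phi/h}\,\cdot\,\big)=\partial_z\mp h^{-1}\partial_z\phi$ together with $A_1-iA_2=-2i\partial_z\phi$ and $A_1+iA_2=2i\partial_{\overline z}\phi$, the four entries match on the nose. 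This entrywise form is also the one that makes the link with the operator $F_h(\cdot,\phi)$ of \eqref{eq:2} transparent.
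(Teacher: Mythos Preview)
Your proof is correct and follows essentially the same route as the paper: the paper isolates the anticommutation fact as a separate lemma (if $AB=-BA$ then $Af(B)=f(-B)A$, applied with $A=\sigma_j$, $B=\sigma_3$, $f=\exp$) and then carries out exactly your Leibniz-rule computation in a second lemma, arriving at $\sigma\cdot(\p-h\nabla\phi^\perp)$ via the identity $-i\sigma\sigma_3=(-\sigma_2,\sigma_1)$. Your entrywise verification at the end is a nice alternative bookkeeping but not a genuinely different argument.
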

		%
		%
The proof follows from the next two lemmas and Definition \ref{defi.gauge} (see also \cite[Theorem 7.3]{T92}).
		\begin{lemma}\label{lem.ant-com}
		Let $f : \CC\to\CC$ be an entire function and $A,B$ be two square matrices such that $AB=-BA$. Then,
		\[Af(B)=f(-B)A\,.\]
		\end{lemma}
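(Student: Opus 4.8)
The statement is purely algebraic: if $A,B$ are square matrices with $AB=-BA$ and $f$ is entire, then $Af(B)=f(-B)A$. The plan is to first establish the monomial case $AB^n=(-B)^nA$ for all $n\in\NN$ by induction on $n$, then pass to power series. The base case $n=0$ is $A=A$, and $n=1$ is the hypothesis. For the inductive step, assuming $AB^n=(-B)^nA$, I would write
\[
AB^{n+1}=(AB^n)B=(-B)^nAB=(-B)^n(-BA)=(-B)^{n+1}A\,,
\]
using the induction hypothesis in the second equality and the anticommutation relation $AB=-BA$ in the third. This settles all monomials.

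For the general entire function, write $f(z)=\sum_{n\geq0}a_nz^n$ with infinite radius of convergence, so that $f(B)=\sum_{n\geq0}a_nB^n$ and $f(-B)=\sum_{n\geq0}a_n(-B)^n$ converge (absolutely) in any matrix norm. Then, using linearity and continuity of left-multiplication by the fixed matrix $A$,
\[
Af(B)=A\sum_{n\geq0}a_nB^n=\sum_{n\geq0}a_n\,AB^n=\sum_{n\geq0}a_n\,(-B)^nA=\Bigl(\sum_{n\geq0}a_n(-B)^n\Bigr)A=f(-B)A\,.
\]
Here the interchange of $A$ with the infinite sum is justified because the partial sums converge in norm and $X\mapsto AX$ is a bounded (indeed Lipschitz with constant $\|A\|$) linear map on the finite-dimensional matrix algebra; the same remark applies when factoring $A$ out on the right.

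There is essentially no serious obstacle here: the only point requiring a word of care is the convergence/continuity argument allowing the exchange of $A$ with the infinite series, which is immediate in finite dimension since all norms are equivalent and matrix multiplication is continuous. (In the application of Proposition \ref{prop:zeromode} one takes $f(z)=e^{z}$, $A=\sigma_1$ or $\sigma_2$, and $B=\sigma_3\phi/h$, noting $\sigma_1\sigma_3=-\sigma_3\sigma_1$ and $\sigma_2\sigma_3=-\sigma_3\sigma_2$, together with the fact that $\phi$ and hence $B$ here is scalar-valued so commutes with everything.) Thus the proof reduces to the two displayed computations above.
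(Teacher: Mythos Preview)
Your argument is correct and is the standard one: induction gives $AB^n=(-B)^nA$, and the power-series expansion of $f$ together with continuity of multiplication yields $Af(B)=f(-B)A$. The paper in fact states this lemma without proof, treating it as elementary, so your write-up supplies exactly the omitted details.
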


		\begin{lemma}[Change of gauge for the Dirac operator]\label{lem.gauge}
		Let $\Phi:\Omega\rightarrow \RR$ be a regular function. We have
			\[
				e^{\sigma_3\Phi}\(\sigma\cdot \p\)e^{\sigma_3\Phi}=\sigma\cdot(\p-h\nabla\Phi^\perp)\,
			\]
		 as operators acting on $H^1(\Omega, \CC^2)$ functions  and where $\nabla\Phi^\perp$ is defined in Definition \ref{defi.gauge}. 
		\end{lemma}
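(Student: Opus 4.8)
The plan is to verify the identity pointwise in $x\in\Omega$ after applying both sides to a smooth spinor $\psi$, and then extend to $\psi\in H^1(\Omega;\CC^2)$ by density, both sides being bounded operators $H^1(\Omega;\CC^2)\to L^2(\Omega;\CC^2)$ since $\Phi$ is regular and $e^{\sigma_3\Phi}=\mathrm{diag}(e^{\Phi},e^{-\Phi})$ is a bounded multiplication operator. First I would expand $(\sigma\cdot\p)(e^{\sigma_3\Phi}\psi)$ using $\sigma\cdot\p=-ih(\sigma_1\partial_1+\sigma_2\partial_2)$ and the Leibniz rule. The key move is to commute the Pauli matrices $\sigma_j$, $j=1,2$, past $e^{\sigma_3\Phi}$: since $\Phi(x)$ is a scalar and $\sigma_j\sigma_3=-\sigma_3\sigma_j$, the matrices $A=\sigma_j$ and $B=\sigma_3\Phi(x)$ satisfy $AB=-BA$, so Lemma \ref{lem.ant-com} with $f=\exp$ gives $\sigma_j\,e^{\sigma_3\Phi}=e^{-\sigma_3\Phi}\,\sigma_j$. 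This yields
\[
(\sigma\cdot\p)\bigl(e^{\sigma_3\Phi}\psi\bigr)= e^{-\sigma_3\Phi}(\sigma\cdot\p)\psi-ih\sum_{j=1}^{2}(\partial_j\Phi)\,\sigma_j\sigma_3\,e^{\sigma_3\Phi}\psi\,.
\]

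Next I would multiply on the left by $e^{\sigma_3\Phi}$. The first term then becomes $(\sigma\cdot\p)\psi$. For the second term, note that the products $\sigma_1\sigma_3$ and $\sigma_2\sigma_3$ also anti-commute with $\sigma_3$, so Lemma \ref{lem.ant-com} again gives $e^{\sigma_3\Phi}(\sigma_j\sigma_3)e^{\sigma_3\Phi}=\sigma_j\sigma_3$, leaving the correction $-ih\sum_{j}(\partial_j\Phi)\sigma_j\sigma_3\,\psi$. A one-line matrix computation gives $\sigma_1\sigma_3=-i\sigma_2$ and $\sigma_2\sigma_3=i\sigma_1$, hence this correction equals $h(\sigma_1\partial_2\Phi-\sigma_2\partial_1\Phi)\psi=-h\,\sigma\cdot(-\partial_2\Phi,\partial_1\Phi)\psi=-h\,\sigma\cdot\nabla\Phi^\perp\,\psi$, with $\nabla\Phi^\perp$ as in Definition \ref{defi.gauge}. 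Altogether
\[
e^{\sigma_3\Phi}(\sigma\cdot\p)e^{\sigma_3\Phi}\psi=\sigma\cdot(\p-h\nabla\Phi^\perp)\psi\,,
\]
which is the claim; Proposition \ref{prop:zeromode} then follows at once by taking $\Phi=\phi/h$ and recalling from Definition \ref{defi.gauge} that $A=\nabla\phi^\perp=h\nabla(\phi/h)^\perp$.

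The computation is elementary, so I do not expect a genuine obstacle. The only points deserving attention are keeping track of the commutator term produced by the Leibniz rule and checking that every Pauli-matrix product that has to be moved through $e^{\sigma_3\Phi}$ truly anti-commutes with $\sigma_3$, so that Lemma \ref{lem.ant-com} is applicable and the accompanying sign flips are correct; the explicit identities $\sigma_1\sigma_3=-i\sigma_2$ and $\sigma_2\sigma_3=i\sigma_1$ then close the argument.
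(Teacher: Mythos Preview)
Your proof is correct and is essentially the same argument as the paper's: both use Lemma \ref{lem.ant-com} to push $\sigma_j$ through $e^{\sigma_3\Phi}$, apply the Leibniz rule, and then reduce the commutator term via the identities $\sigma_1\sigma_3=-i\sigma_2$, $\sigma_2\sigma_3=i\sigma_1$ (which the paper packages as $-i\sigma\sigma_3=\sigma^\perp$). The only cosmetic difference is order: the paper commutes the \emph{left} exponential past $\sigma$ first, so the two exponentials cancel directly after the Leibniz rule and Lemma \ref{lem.ant-com} is invoked only once, whereas you expand on the right first and then invoke the lemma a second time for $\sigma_j\sigma_3$; this second invocation could also be avoided by noting that $\sigma_3$ commutes with $e^{\sigma_3\Phi}$.
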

		\begin{proof}
			By Lemma \ref{lem.ant-com}, we have for $k=1,2$ that
				\[
					e^{\sigma_3\Phi}\sigma_k =\sigma_k e^{-\sigma_3\Phi}\,.
				\]
			Thus, by the Leibniz rule,
			\[e^{\sigma_3\Phi}\(\sigma\cdot \p\)e^{\sigma_3\Phi}=(\sigma e^{-\sigma_{3}\Phi}\cdot \p)e^{\sigma_3\Phi}=\sigma\cdot\left(\p-ih\sigma_{3}\nabla\Phi\right)\,.\]
			It remains to notice that $-i\sigma \sigma_{3}=\sigma^\perp:=(-\sigma_{2}, \sigma_{1})$ so that
			\[e^{\sigma_3\Phi}\(\sigma\cdot \p\)e^{\sigma_3\Phi}=\sigma\cdot \p+h\sigma^\perp\cdot\nabla\Phi=\sigma\cdot \p-h\sigma\cdot\nabla\Phi^\perp\,.\]

		\end{proof}
	We let
	\[\partial_{z} := \frac{\partial_x-i\partial_y}{2}\,,\qquad\partial_{\overline{z}} := \frac{\partial_x+i\partial_y}{2}\,.\] 
	We obtain then the following result.
	\begin{lemma}\label{lem.lambdaholo}
		Let $k\in \NN^*$ be such that $\lambda_k(h)< 2B_0 h$. Then, we have
	\begin{equation}\label{eq.lambda1'}
		\lambda_k(h) =  \inf_{
					\begin{array}{c}
						V\subset H^1_0(\Omega;\CC)\,,\\
						\dim V = k\,,
					\end{array}
					}
					\sup_{
					\begin{array}{c}
						v\in V\setminus\{0\}\,,
					\end{array}
					}
					\frac{4\int_{\Omega}e^{-2\phi/h}|h\partial_{\overline{z}}v|^2\dd x}{\int_{\Omega}|v|^2e^{-2\phi/h}\dd x}\,.
	\end{equation}
		We recall that $\lambda_k(h)$ is defined in \eqref{eq:eigDP}.
	\end{lemma}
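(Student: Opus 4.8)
The plan is to derive \eqref{eq.lambda1'} from the factorization in Proposition \ref{prop:zeromode} together with the block structure of $\DP$ recorded in \eqref{eq:defDP}. First I would observe that, under the hypothesis $\lambda_k(h)<2B_0 h$, the $k$-th eigenvalue of $\DP$ cannot come from the lower-right block $\mathscr{L}^+_h$: indeed \eqref{eq:defDP} gives $\mathscr{L}^+_h = |\p-A|^2 + hB \geq hB \geq hB_0$ on $H^1_0(\Omega;\CC)$, and in fact one sees more sharply from \eqref{def:quadform} applied to $(0,w)^{\rm T}$ that $\langle w,\mathscr{L}^+_h w\rangle \geq 2hB_0\|w\|^2$, so every eigenvalue of $\mathscr{L}^+_h$ is at least $2B_0 h$. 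Since $\DP = \mathscr{L}^-_h \oplus \mathscr{L}^+_h$, the eigenvalues of $\DP$ below $2B_0 h$ are exactly the eigenvalues of $\mathscr{L}^-_h$ below that threshold, and in particular $\lambda_k(h)$ is the $k$-th eigenvalue of $\mathscr{L}^-_h$, so by the min-max theorem it equals the infimum over $k$-dimensional subspaces $V\subset H^1_0(\Omega;\CC)$ of $\sup_{u\in V\setminus\{0\}} \|(\p-A)u\|^2_{L^2(\Omega)}/\|u\|^2_{L^2(\Omega)}$, where we use that the scalar quadratic form of $\mathscr{L}^-_h$ on the first component is $\|(\p-A)u\|^2 - h\langle u, Bu\rangle = \|(\sigma\cdot(\p-A))(u,0)^{\rm T}\|^2$.

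Next I would perform the change of unknown dictated by Proposition \ref{prop:zeromode}. Writing $\psi = (u,0)^{\rm T}$, the identity \eqref{eq:facto} gives $\sigma\cdot(\p-A)\psi = e^{\sigma_3\phi/h}\,\sigma\cdot\p\,(e^{\sigma_3\phi/h}\psi)$. On the first component, $e^{\sigma_3\phi/h}$ acts by multiplication by $e^{\phi/h}$, so setting $v := e^{-\phi/h} u$ (equivalently $u = e^{\phi/h} v$), we have $e^{\sigma_3\phi/h}\psi = (e^{\phi/h}\cdot e^{-\phi/h}u, 0)^{\rm T}$... more carefully: the substitution that trivializes the computation is to write $u = e^{\phi/h} v$, so that $e^{\sigma_3\phi/h}\psi$ on the first slot becomes $e^{\phi/h}u = e^{2\phi/h}v$ — I will instead choose the bookkeeping so that $\psi = e^{\sigma_3\phi/h}(v,0)^{\rm T}$, i.e. $u = e^{\phi/h}v$. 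Then $\sigma\cdot(\p-A)\psi = e^{\sigma_3\phi/h}\sigma\cdot\p\,(v,0)^{\rm T}$. Since $\sigma\cdot\p = -ih\,\sigma\cdot\nabla$ and $(\sigma_1+i\sigma_2)/2$ maps $(v,0)^{\rm T}$ appropriately, a direct computation gives $\sigma\cdot\p\,(v,0)^{\rm T} = (0,\,2h\partial_{\overline z} v)^{\rm T}$ up to the obvious constant (here one uses $\sigma_1\partial_x+\sigma_2\partial_y$ applied to $(v,0)^{\rm T}$ equals $(0,(\partial_x+i\partial_y)v)^{\rm T}$, and $\p=-ih\nabla$ contributes the factor $-ih$, giving $(0,-ih(\partial_x+i\partial_y)v)^{\rm T} = (0, -2ih\,\partial_{\overline z}v)^{\rm T}$). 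Applying $e^{\sigma_3\phi/h}$, which multiplies the second component by $e^{-\phi/h}$, we get $\|\sigma\cdot(\p-A)\psi\|^2_{L^2(\Omega)} = 4h^2\int_\Omega e^{-2\phi/h}|\partial_{\overline z}v|^2\,\dd x$. Similarly $\|u\|^2_{L^2(\Omega)} = \|e^{\phi/h}v\|^2 = \int_\Omega e^{2\phi/h}|v|^2\,\dd x$ — and here I must fix the sign convention consistently with the statement, which has $e^{-2\phi/h}$ in the denominator; choosing instead $u = e^{-\phi/h}v$ makes the denominator $\int_\Omega e^{-2\phi/h}|v|^2\,\dd x$ and the numerator $4h^2\int_\Omega e^{-2\phi/h}|\partial_{\overline z}v|^2\,\dd x$, matching \eqref{eq.lambda1'} exactly.

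Finally I would check that the map $u\mapsto v = e^{\phi/h}u$ (with the sign chosen as in the previous paragraph) is a bijection of $H^1_0(\Omega;\CC)$ onto itself: since $\phi\in\mathscr{C}^\infty(\overline\Omega)$, multiplication by $e^{\pm\phi/h}$ preserves $H^1(\Omega)$ and preserves vanishing of the trace on $\partial\Omega$, so it is a topological isomorphism of $H^1_0(\Omega;\CC)$; consequently it induces a bijection between $k$-dimensional subspaces, and the Rayleigh quotients transform as computed above. Substituting into the min-max characterization of the $k$-th eigenvalue of $\mathscr{L}^-_h$ yields precisely \eqref{eq.lambda1'}. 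The only genuinely delicate point is the first step — justifying that $\lambda_k(h)$, defined through the full two-component operator, coincides with the $k$-th eigenvalue of the scalar operator $\mathscr{L}^-_h$ once one knows $\lambda_k(h)<2B_0h$; this requires the spectral decomposition of the block-diagonal operator and the lower bound $\mathscr{L}^+_h\geq 2B_0h$, both of which are immediate from \eqref{eq:defDP} and \eqref{def:quadform}. The remaining computations are routine applications of the Leibniz rule and the explicit form of the Pauli matrices.
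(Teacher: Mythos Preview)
Your proof is correct and follows essentially the same approach as the paper: reduce to the scalar block $\mathscr{L}^-_h$ via the bound $\mathscr{L}^+_h\geq 2B_0h$, then apply the substitution $u=e^{-\phi/h}v$ together with Proposition~\ref{prop:zeromode} to transform the Rayleigh quotient. Your exposition wanders a bit on the sign convention before settling on $u=e^{-\phi/h}v$, and the sharp bound $\langle w,\mathscr{L}^+_h w\rangle\geq 2hB_0\|w\|^2$ implicitly uses the inequality $\|(\p-A)w\|^2\geq h\int_\Omega B|w|^2$ (Remark~\ref{rem.boundmagn}) rather than following directly from \eqref{def:quadform}, but these are cosmetic points.
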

	\begin{proof}
	By \eqref{eq:defDP} and \eqref{eq:eigDP}, we get since $\mathscr{L}^+_{h}\geq 2B_0h$,
	\[
		\lambda_k(h) = \inf_{
				\begin{array}{c}
					V\subset H^1_0(\Omega;\CC)\,,\\
					\dim V = k\,,
				\end{array}
				}
				\sup_{
				\begin{array}{c}
					u\in V\setminus\{0\}\,,
				\end{array}
				}
				\frac{\norm{\sigma\cdot(\p-A)\left(\begin{array}{c}u\\0\end{array}\right)}_{L^2(\Omega)}^2}{\norm{u}_{L^2(\Omega)}^2}\,.
	\]
	Let $u\in H^1_0(\Omega;\CC)$ and $h>0$. Letting $u=e^{-\phi/h}v$, we have, by Proposition \ref{prop:zeromode},
\[
	\norm{\sigma\cdot(\p-A)\left(\begin{array}{c}u\\0\end{array}\right)}_{L^2(\Omega)}^2 = \norm{e^{\sigma_3\phi/h}\sigma\cdot \p \begin{pmatrix}v\\0\end{pmatrix}}_{L^2(\Omega)}^2 = 4\int_{\Omega}e^{-2\phi/h}|h\partial_{\overline{z}}v|^2\dd x\,,
\]
and
\[
	\norm{u}_{L^2(\Omega)}^2 = \int_{\Omega}|v|^2e^{-2\phi/h}\dd x\,.
\]
	\end{proof}

\section{Upper bounds}\label{sec.3}
This section is devoted to the proof of the following upper bounds.

\begin{proposition}\label{prop:upper-bound}
Assume that $\Omega$ is $\mathscr{C}^2$ and satisfies {\rm Assumption \ref{asum:setom}}. For all $k\in\mathbb{N}^*$, we have
\begin{equation}\label{eq.ubin1}
\lambda_{k}(h)\leq  C_{\rm sup}(k) h^{-k+1}e^{2\phi_{\min}/h}(1+o(1))\,,
\end{equation}
where $\lambda_{k}(h)$ and $C_{\rm sup}(k)$ are defined in \eqref{eq:eigDP} and in {\rm Theorem \ref{theo.main}} respectively.
\end{proposition}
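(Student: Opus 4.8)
The plan is to construct, for each fixed $k$, a $k$-dimensional trial space inside $H^1_0(\Omega;\CC)$ and apply the variational characterization \eqref{eq.lambda1'} of $\lambda_k(h)$ (valid once $h$ is small, since the right-hand sides go to $0$ while $2B_0h$ does not). The building blocks are products $v = \omega\,\chi_{\delta}$, where $\omega\in\mathscr{H}^2(\Omega)$ is holomorphic (in fact we only need finitely many of them, so after the density statement of Remark \ref{rem.choihardy} we may take $\omega\in W^{1,\infty}(\Omega)\cap\mathscr{H}^2(\Omega)$), and $\chi_\delta$ is a cut-off equal to $1$ on $\Omega_\delta=\{x:\mathsf{dist}(x,\partial\Omega)\ge\delta\}$ and decaying to $0$ at $\partial\Omega$, with the profile on the collar $T_\delta=\Omega\setminus\Omega_\delta$ chosen to be (close to) the optimizer of the one-dimensional annular problem \eqref{eq:9}; here $\delta=\delta(h)$ is tuned so that $\delta^2/h\to0$ and $\delta/h\to\infty$ as in \eqref{eq:5}. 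For $\lambda_k(h)$ one takes $\omega$ ranging over $1,(z-z_{\min}),\dots,(z-z_{\min})^{k-1}$, or rather suitable holomorphic representatives realizing the Hardy-distance $\disth((z-z_{\min})^{k-1},\mathscr{H}^2_k(\Omega))$, so that the Gram matrix of the trial family is, to leading order, diagonal after rescaling near $x_{\min}$.

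The estimate then splits into a numerator and a denominator computation. For the numerator $\num(\omega\chi_\delta,\phi)=4\int_\Omega e^{-2\phi/h}|h\partial_{\bar z}(\omega\chi_\delta)|^2$: since $\omega$ is holomorphic, $\partial_{\bar z}(\omega\chi_\delta)=\omega\,\partial_{\bar z}\chi_\delta$ is supported in the collar $T_\delta$, so the integral localizes there; writing it in tubular coordinates and Taylor-expanding $\phi$ around the boundary point gives, as in \eqref{eq:6}, $\num(\omega\chi_\delta,\phi)=(1+o(1))\,h^2 J_h(\omega\chi_\delta)$, and choosing $\chi_\delta$ as the annular optimizer of Lemma \ref{lem.opt} produces, modulo the boundary weight $\pa_\n\phi$, a factor of order $h^{-1}\Nh(\omega)^2$ (with the constant $\Phi$ in the radial case, and with $\min_{\partial\Omega}\pa_\n\phi$ controlling it in general). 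For the denominator $\den(\omega\chi_\delta,\phi)=\int_\Omega|\omega\chi_\delta|^2 e^{-2\phi/h}$: by Observation \ref{obs2} (Lemma \ref{lem.normL2}) the mass concentrates near $x_{\min}$, so after the change of scale $y=(x-x_{\min})/\sqrt h$ and Taylor-expanding $2\phi/h$ to second order one gets $\den\sim h\,e^{-2\phi_{\min}/h}\Nb(\omega(x_{\min}+h^{1/2}\cdot))^2$, which for $\omega$ of leading behaviour $(z-z_{\min})^{k-1}$ contributes $h^{k-1}$ times $\Nb(z^{k-1})^2$-type quantities. Dividing, the explicit powers of $h$ combine to $h^{-k+1}e^{2\phi_{\min}/h}$ and the remaining ratio of norms is exactly $C_{\rm sup}(k)=2(\disth(\cdots)/\distb(\cdots))^2$; one finishes by feeding the $k$ trial functions into \eqref{eq.lambda1'} and checking that the off-diagonal Gram entries are $o(1)$ relative to the diagonal (this follows from the orthogonality built into the Hardy/Bargmann structure and from the leading-order decoupling of the scales), so that the sup over the trial space is controlled by the largest diagonal Rayleigh quotient.

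The main obstacle I expect is making the annular reduction \eqref{eq:6}--\eqref{eq:8} rigorous and uniform: one must control the error from Taylor-expanding $\phi$ in the collar (this is where $\delta^2/h\to0$ is used), handle the non-constancy of $\pa_\n\phi$ along $\partial\Omega$ (a frozen-coefficient / partition-of-unity argument on the boundary, exploiting $\min_{\partial\Omega}\pa_\n\phi>0$), and — crucially for the prefactor to come out sharp rather than merely of the right order — match the $h$-dependent optimal profile $\chi_\delta$ of the 1D problem with the rescaled Laplace computation at $x_{\min}$ without losing constants. A secondary technical point is the passage from general $\omega\in\mathscr{H}^2(\Omega)$ to bounded holomorphic representatives so that $\omega\chi_\delta\in H^1_0$; this is precisely where the Dini-continuity/density hypothesis of Remark \ref{rem.choihardy} enters (cf. Remark \ref{rem.hardy_approx}), and for the bare statement of Proposition \ref{prop:upper-bound} — which assumes only $\mathscr{C}^2$ — one uses $\mathscr{C}^2\Rightarrow\mathscr{C}^{1,\alpha}$ hence Dini. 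Once these collar and concentration estimates are in place, the computation of $C_{\rm sup}(k)$ is, as indicated in Section \ref{sec.mt}, a direct manipulation of the Hardy and Bargmann distances.
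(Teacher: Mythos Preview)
Your plan is correct and matches the paper's proof in all essential respects: trial functions of the form (holomorphic)$\times$(collar cutoff), the cutoff chosen as the one-dimensional optimizer of Lemma~\ref{lem.opt} with $\varepsilon=h|\log h|$, the numerator reducing to the weighted Hardy norm on $\partial\Omega$, the denominator to a rescaled Bargmann norm at $x_{\min}$, density in $\mathscr{H}^2(\Omega)$ to pass to $W^{1,\infty}$ representatives, and a final min-max step.

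Two implementation choices the paper makes that your plan leaves slightly open and that you should adopt when filling in details: first, the holomorphic part is taken to be $w_{n,h}(z)=h^{-1/2}P_n\bigl((z-z_{\min})/h^{1/2}\bigr)-h^{-(n+1)/2}Q_{n,m}(z)$, where $(P_n)$ are the $\Nb$-orthogonal \emph{monic} polynomials of Notation~\ref{not.Pn} --- this is precisely what makes the denominator Gram matrix diagonal (your ``off-diagonal $o(1)$'' step), while at the boundary the top $h$-order of $w_{n,h}$ is still $h^{-(n+1)/2}[(z-z_{\min})^n-Q_{n,m}]$, so the numerator sees the Hardy distance; second, there is no partition of unity in $s$: the cutoff profile $\rho(s,t)$ is itself allowed to depend on $s$ through $\partial_{\n}\phi(s)$, and the resulting tangential term $|\partial_s\rho|^2$ is bounded using part~(c) of Lemma~\ref{lem.opt}.
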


\subsection{Choice of test functions}\label{sec:upbdheuristic}
Let $k\in\NN^*$ and $m\in\NN$. 
By Formula \eqref{eq.lambda1'}, we look for a $k$-dimensional subspace $V_{h}$ of  $H^1_0(\Omega; \CC)$ such that 
\[
\sup_{v\in V_h\setminus\{0\}}\ \frac{4h^2\int_{\Omega}|\partial_{\overline{z}}v|^2e^{-2\phi/h}\dd x}{\int_{\Omega}|v|^2e^{-2(\phi-\phi_{\min})/h}\dd x}\,\leq C_{\rm sup}(k) h^{-k+1}(1+o(1))\,.
\]
Using the min-max principle, this would give \eqref{eq.ubin1}.
Formula \eqref{eq.lambda1'} suggests to take functions of the form
\[
	v(x)=\chi(x)w(x)\,,
\]
where
\begin{enumerate}[\rm i.]
\item $w$ is holomorphic on a neighborhood on $\Omega$,
\item the function $\chi: \overline{\Omega}\to [0,1]$ is a Lipschitzian function satisfying the Dirichlet boundary condition and being $1$ away from a fixed neighborhood of the boundary.
\end{enumerate}
In particular, there exists $\ell_0\in(0,\mathsf{d}(x_{\min},\pa\Omega))$ such that 
\begin{equation}\label{eq:defl0}
	\chi(x) = 1, \quad \mbox{ for all }x\in \Omega \mbox{ such that } \mathsf{d}(x,\pa\Omega)>\ell_0\,,
\end{equation}
where $\mathsf{d}$ is the usual Euclidean distance. 
\begin{remark}\label{rem.naive}
The most naive test functions set could be the following
\[
	V_h = {\rm span}(\chi_h(z),\dots, \chi_h(x)(z-z_{\rm min})^{k-1})\,,
\]
where $(\chi_h)_{h\in(0,1]}$ satisfy \eqref{eq:defl0}.
With this choice, one would get
\[
\sup_{v\in V_h\setminus\{0\}}\ \frac{4h^2\int_{\Omega}|\partial_{\overline{z}}v|^2e^{-2\phi/h}\dd x}{\int_{\Omega}|v|^2e^{-2(\phi-\phi_{\min})/h}\dd x}\,\leq \widetilde{C_{\rm sup}}(k) h^{-k+1}(1+o(1))\,,
\]
where
\[
	\widetilde{C_{\rm sup}}(k) = 2\left(\frac{\Nh \left((z-z_{\rm min})^{k-1}\right)}{\distb \left(z^{k-1},\mathcal{P}_{k-2}\right)}\right)^2\geq C_{\rm sup}(k) \,.
\]
Note however that in the radial case $\widetilde{C_{\rm sup}}(k) = C_{\rm sup}(k)$. 
We will rather use functions compatible with the Hardy space structure to get the bound of Proposition \ref{prop:upper-bound}, as explained below.
\end{remark}
\begin{notation}\label{not.Pn}
Let us denote by $(P_n)_{n\in\NN}$ the $\Nb$-orthogonal family  such that $P_n(Z) = Z^n + \sum_{j=0}^{n-1}b_{n,j}Z^j$ obtained after a Gram-Schmidt process on $(1,Z,\dots, Z^n,\dots)$. Since $P_n$ is $\Nb$-orthogonal to $\mathcal{P}_{n-1}$,  we have
\begin{equation}\label{rem:testfuncupbound}
	\begin{split}
	\distb\left(Z^{n},\mathcal{P}_{n-1}\right)
	&=\distb\left(P_n,\mathcal{P}_{n-1}\right)  
	= \inf\{\Nb(P_n-Q)\,, Q\in\mathcal{P}_{n-1}\}
	\\&
	=\inf\{\sqrt{\Nb(P_n)^2+\Nb(Q)^2}\,, Q\in\mathcal{P}_{n-1}\}
	= \Nb(P_{n})\,,\mbox{ for }n\in\NN\,.
	\end{split}
\end{equation}
%
%
Let $Q_{n}\in \mathscr{H}^{2}_k(\Omega)$ be the unique function such that
\[\disth \left((z-z_{\rm min})^{n},\mathscr{H}^2_k(\Omega)\right) = \Nh((z-z_{\rm min})^n-Q_n(z))\,,\]
for $n\in \{0,\dots,k-1\}$, (see Remark \ref{rem.hilbertprojhardy}). We recall that $\Nb$, $\Nh$, $\mathcal{P}_{n-1}$, and $\mathscr{H}^2_k(\Omega)$ are defined in Section \ref{sec.mt}.
\end{notation}
\begin{lemma}\label{lem:hardy_approx}
	For all $n\in \{0,\dots,k-1\}$, there exists a sequence $(Q_{n,m})_{m\in \NN}\subset \mathscr{H}^2_k(\Omega)\cap W^{1,\infty}(\Omega)$ that converges to $Q_n$ in $\mathscr{H}^2(\Omega)$.
\end{lemma}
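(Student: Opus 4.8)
The plan is to invoke the density statement recalled in Remark \ref{rem.choihardy}, namely that $W^{1,\infty}(\Omega)\cap \mathscr{H}^2(\Omega)$ is dense in $\mathscr{H}^2(\Omega)$ whenever $\partial\Omega$ is Dini-continuous (and hence, in particular, whenever $\partial\Omega$ is $\mathscr{C}^2$ as in the standing hypotheses of Section \ref{sec.3}); this is proved in Lemma \ref{lem.density}. First I would pick, for each fixed $n\in\{0,\dots,k-1\}$, a sequence $(R_{n,m})_{m\in\NN}\subset W^{1,\infty}(\Omega)\cap\mathscr{H}^2(\Omega)$ with $\Nh(R_{n,m}-Q_n)\to 0$ as $m\to\infty$. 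The difficulty is that these $R_{n,m}$ need not lie in the closed subspace $\mathscr{H}^2_k(\Omega)$, i.e. they need not satisfy the vanishing conditions $R_{n,m}^{(j)}(z_{\min})=0$ for $j\in\{0,\dots,k-1\}$, even though $Q_n$ does. So the main step is to correct each $R_{n,m}$ by subtracting a suitable holomorphic polynomial of degree $\le k-1$ to enforce these conditions, while controlling the $\Nh$-norm of the correction.

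Concretely, let $\pi_k:\mathscr{H}^2(\Omega)\to\mathscr{H}^2(\Omega)$ be the map sending $u$ to its degree-$(k-1)$ ``Taylor corrector'' at $z_{\min}$, i.e. $\pi_k(u)(z)=\sum_{j=0}^{k-1}\frac{u^{(j)}(z_{\min})}{j!}(z-z_{\min})^j$, and set $Q_{n,m}:=R_{n,m}-\pi_k(R_{n,m})$. By construction $Q_{n,m}$ has vanishing derivatives up to order $k-1$ at $z_{\min}$, so $Q_{n,m}\in\mathscr{H}^2_k(\Omega)$; and since polynomials are bounded on the bounded set $\Omega$ with bounded derivatives, $Q_{n,m}\in W^{1,\infty}(\Omega)$ as well (it is the difference of a $W^{1,\infty}$ function and a polynomial). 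It remains to check the convergence $\Nh(Q_{n,m}-Q_n)\to 0$. Writing $Q_{n,m}-Q_n=(R_{n,m}-Q_n)-\pi_k(R_{n,m})$ and using $\pi_k(Q_n)=0$ (because $Q_n\in\mathscr{H}^2_k(\Omega)$), we have $\pi_k(R_{n,m})=\pi_k(R_{n,m}-Q_n)$, so
\[
\Nh(Q_{n,m}-Q_n)\le \Nh(R_{n,m}-Q_n)+\Nh\big(\pi_k(R_{n,m}-Q_n)\big)\,.
\]
The first term tends to $0$ by choice of $R_{n,m}$. For the second term, the key point is that $u\mapsto \pi_k(u)$ is a \emph{bounded} operator on $\mathscr{H}^2(\Omega)$: each coefficient $u^{(j)}(z_{\min})$ is controlled by $\Nh(u)$ via the Cauchy-formula estimate of Remark \ref{rem.hilbertprojhardy} (valid since $z_{\min}\in\Omega$ is an interior point and $\min_{\partial\Omega}\partial_{\n}\phi>0$), and the finitely many monomials $(z-z_{\min})^j$, $j=0,\dots,k-1$, have finite $\Nh$-norm; hence $\Nh(\pi_k(v))\le C_k\,\Nh(v)$ for all $v\in\mathscr{H}^2(\Omega)$, with $C_k$ independent of $v$. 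Applying this with $v=R_{n,m}-Q_n$ shows the second term tends to $0$ as well, which finishes the proof.

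The main obstacle is thus purely the bookkeeping of the correction term: one must verify that projecting onto the vanishing conditions at $z_{\min}$ is continuous on $\mathscr{H}^2(\Omega)$ and preserves both membership in $\mathscr{H}^2_k(\Omega)$ and the $W^{1,\infty}$ regularity. All of this is immediate once the Cauchy-type estimate of Remark \ref{rem.hilbertprojhardy} is in hand, so no serious analytic work is needed beyond citing Lemma \ref{lem.density}. I expect the write-up to be a few lines per displayed inequality, with the only subtlety being to state $\pi_k$ carefully and to note that $\pi_k$ annihilates $\mathscr{H}^2_k(\Omega)$ by definition of that space in \eqref{space.Hk}.
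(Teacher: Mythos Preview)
Your argument is correct, and it takes a genuinely different route from the paper's. The paper factors $Q_n(z)=(z-z_{\min})^{k}\,\widetilde Q_n(z)$ (there is an apparent typo in the exponent), approximates $\widetilde Q_n$ in $\mathscr{H}^2(\Omega)$ by $\widetilde Q_{n,m}\in W^{1,\infty}(\Omega)\cap\mathscr{H}^2(\Omega)$ via Lemma~\ref{lem.density}, and then sets $Q_{n,m}=(z-z_{\min})^{k}\widetilde Q_{n,m}$; the vanishing at $z_{\min}$ is thus built in by the factor $(z-z_{\min})^{k}$, and convergence follows from boundedness of multiplication by $(z-z_{\min})^{k}$ on $L^2(\partial\Omega,\partial_{\n}\phi\,\dd s)$. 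Your approach is additive rather than multiplicative: approximate first, then subtract the degree-$(k-1)$ Taylor polynomial at $z_{\min}$; the vanishing is enforced by the correction, and convergence follows from boundedness of the Taylor map $\pi_k$ on $\mathscr{H}^2(\Omega)$, which is exactly the Cauchy estimate of Remark~\ref{rem.hilbertprojhardy}. Both proofs are short; the paper's avoids introducing $\pi_k$ and its operator bound, while yours avoids checking that the quotient $\widetilde Q_n$ lies in $\mathscr{H}^2(\Omega)$ and makes the role of $\mathscr{H}^2_k(\Omega)$ as the kernel of a bounded projection more explicit.
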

\begin{proof}
	We can write $Q_n(z) = (z-z_{\rm min})^{k-1}\widetilde Q_n(z)$. Here, $\widetilde Q_n$ is an holomorphic function on $\Omega$. Since $z\mapsto (z-z_{\rm min})^{1-k}\in L^{\infty}(\partial \Omega)$, we get $\widetilde Q_n\in\mathscr{H}^2(\Omega)$. By Lemma \ref{lem.density}, there exists a sequence $(\widetilde Q_{n,m})_{m\in \NN}\subset\mathscr{H}^2(\Omega)\cap W^{1,\infty}(\Omega)$ converging to $\widetilde Q_n$ in $\mathscr{H}^2(\Omega)$. We have
	\[
		\Nh((z-z_{\rm min})^{k-1}(\widetilde Q_{n,m}-\widetilde Q_{n}))\leq \norm{(z-z_{\rm min})^{k-1}}_{L^\infty(\partial \Omega)}\Nh(\widetilde Q_{n,m}-\widetilde Q_{n})\,,
	\]
	so that the sequence $(Q_{n,m})_{m\in \NN} = ((z-z_{\rm min})^{k-1}\widetilde Q_{n,m})_{m\in \NN}\subset \mathscr{H}^2_k(\Omega)$ converges to $Q_n$ in $\mathscr{H}^2(\Omega)$. Since $z\mapsto (z-z_{\rm min})^{k-1}\in L^{\infty}(\partial \Omega)$, $Q_{n,m}\in \mathscr{H}^2_k(\Omega)$.
\end{proof}
Let us now define the $k$-dimensional vector space $V_{h,k,\rm sup}$ by
\begin{equation}\label{eq.spaceupbd}
	V_{h,k,\rm sup} = {\rm span}(w_{0,h},\dots,w_{k-1,h})\,,
\end{equation}
\[
	w_{n,h}(z) = h^{-\frac{1}{2}}P_{n}\left(\frac{z-z_{\rm min}}{h^{1/2}}\right) - h^{-\frac{1+n}{2}}Q_{n,m}(z),\mbox{ for }n\in \{0,\dots,k-1\}\,.
\]

At the end of the proof, $m$ will be sent to $+\infty$. Note that we will not need the uniformity of the semiclassical estimates with respect to $m$. That is why the parameter $m$ does not appear in our notations. Note that $w_{n,h}$, being a non trivial holomorphic function, does not vanish identically at the boundary. To fulfill the Dirichlet condition, we have to add a cutoff function (see below).

\begin{remark}\label{rem.hardy_approx}
	Consider
	\[
	\widetilde\omega_{n,h}(z) = h^{-\frac{1}{2}}P_{n}\left(\frac{z-z_{\rm min}}{h^{1/2}}\right) - h^{-\frac{1+n}{2}}Q_{n}(z)\,.
	\]
	Since $Q_n$ belongs to $\mathscr{H}^2(\Omega)\not\subset H^1(\Omega;\CC)$, the functions $\widetilde w_{n,h}:x\mapsto\widetilde \omega_{n,h}(x_1+ix_2)$ and $\chi \widetilde w_{n,h}$ do not belong necessarily to $H^1(\Omega;\CC)$ and $H^1_0(\Omega;\CC)$ respectively. 
	That is why we introduced $Q_{n,m}$. 
	Note that to get $H^1_0(\Omega; \CC)$ test functions, it suffices to impose that $\chi$ is compactly supported in $\Omega$. With this strategy, our proof can be adapted to the case where $\Omega$ is non necessarily simply connected.

\end{remark}

\subsection{Estimate of the $L^2$-norm}
The aim of this section is to prove the following estimate.

\begin{lemma}\label{lem:L2esti1}
Let  $h\in(0,1]$, $v_h = \chi\sum_{j=0}^{k-1}c_j w_{j,h}$ with $c_0,\dots c_{k-1}\in \CC$, $\chi$ satisfying \eqref{eq:defl0} and $(w_{j,h})_{j\in\{0,\dots,k-1\}}$ defined in \eqref{eq.spaceupbd}. We have 
\begin{equation}\label{eq:normuV2}
		\int_{\Omega}|v_h|^2 e^{-2(\phi(x)-\phi_{\rm min})/h}\dd x
		 = (1+o(1))\sum_{j=0}^{k-1}|c_j|^2\Nb(P_j)^2\,,
\end{equation}
where $\Nb$ is defined in Notation \ref{not.BH} and $o(1)$ does not depend on $c = (c_0,\dots,c_{k-1})$ and $\chi$.
\end{lemma}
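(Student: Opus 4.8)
The identity to establish is a Laplace-type asymptotic for the weighted $L^2$-norm of the test functions. The strategy is to split $\Omega$ into a shrinking neighbourhood of $x_{\min}$ and its complement, show the complement contributes a negligible amount, and rescale the dominant piece to produce the Segal--Bargmann norm. Concretely, I would first observe that since $\phi-\phi_{\min}\geq 0$ vanishes only at $x_{\min}$ and is non-degenerate there (assumption \eqref{eq.a3}), there is $c_0>0$ with $\phi(x)-\phi_{\min}\geq c_0|x-x_{\min}|^2$ on a fixed ball, and $\phi(x)-\phi_{\min}\geq c_1>0$ outside that ball. Away from $x_{\min}$, on the region where $\mathsf{d}(x,\partial\Omega)>\ell_0$ one has $v_h = \sum c_j w_{j,h}$, which is a polynomial in $h^{-1/2}(z-z_{\min})$ plus the $h^{-(1+n)/2}Q_{n,m}$ terms — in any case polynomially bounded in $h^{-1}$ — so the factor $e^{-2c_1/h}$ kills it; near the boundary layer $\mathsf{d}(x,\partial\Omega)\leq\ell_0$ the weight $e^{-2(\phi-\phi_{\min})/h}$ is still bounded by $e^{-2c_1'/h}$ for some $c_1'>0$ (because $\phi<0=\phi|_{\partial\Omega}$ is bounded away from $\phi_{\min}$ there), while $v_h$ and hence $|v_h|^2$ is at worst $\mathscr{O}(h^{-N})$ for some fixed $N$ depending on $k,m$. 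So the whole region $\{|x-x_{\min}|\geq \rho\}$ contributes $\mathscr{O}(h^{-N}e^{-c/h})$.

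On the remaining ball $B(x_{\min},\rho)$, the cutoff $\chi$ equals $1$ (since $\rho$ can be taken smaller than $\mathsf{d}(x_{\min},\partial\Omega)-\ell_0$), so $v_h=\sum_j c_j w_{j,h}$ there. I would then perform the change of variables $x = x_{\min}+h^{1/2}y$, under which $\dd x = h\,\dd y$, the domain becomes the ball $B(0,\rho h^{-1/2})$ which grows to $\mathbb{R}^2$, and the weight becomes $\exp(-2(\phi(x_{\min}+h^{1/2}y)-\phi_{\min})/h)$. By Taylor's formula with the vanishing gradient at $x_{\min}$, this weight converges pointwise (and is dominated, using the quadratic lower bound above, by $e^{-c_0|y|^2}$ uniformly in $h$) to $e^{-\mathsf{Hess}_{x_{\min}}\phi(y,y)}$. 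Meanwhile, the holomorphic part $h^{-1/2}P_n(h^{-1/2}(z-z_{\min}))$ becomes, after multiplying by $h^{1/2}$ from $\dd x$, exactly $P_n(y_1+iy_2)$; and the Hardy-space correction $h^{-(1+n)/2}Q_{n,m}(z)$ becomes $h^{-n/2}Q_{n,m}(x_{\min}+h^{1/2}y)$ which, since $Q_{n,m}$ vanishes to order $k\geq n+1$ at $z_{\min}$, is $\mathscr{O}(h^{1/2})$ locally — hence negligible compared to $P_n$. So $h^{1/2}w_{n,h}(x_{\min}+h^{1/2}y) \to P_n(y_1+iy_2)$ with polynomial domination. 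Dominated convergence then gives that the rescaled integral converges to $\int_{\mathbb{R}^2}|\sum_j c_j P_j(y_1+iy_2)|^2 e^{-\mathsf{Hess}_{x_{\min}}\phi(y,y)}\,\dd y$, and by the $\Nb$-orthogonality of the $(P_j)$ (Notation \ref{not.Pn}) this equals $\sum_j |c_j|^2\Nb(P_j)^2$. Combining with the negligible outer contribution yields \eqref{eq:normuV2}, with the $o(1)$ uniform in $c$ because after factoring out $\sum|c_j|^2\Nb(P_j)^2$ the convergence of the normalized integrand is uniform on the unit sphere of $\CC^k$ (finite-dimensional).

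The main obstacle is obtaining a clean uniform domination to justify dominated convergence in the rescaled integral: one must control the cross terms $c_j\overline{c_{j'}} w_{j,h}\overline{w_{j',h}}$ including the $Q_{n,m}$ pieces on the growing domain $B(0,\rho h^{-1/2})$, showing their contribution is genuinely $o(1)$ relative to the leading term and not merely $\mathscr{O}(1)$. The quadratic lower bound $\phi-\phi_{\min}\geq c_0|x-x_{\min}|^2$ (valid on the fixed ball) against the polynomial growth of $|w_{j,h}(x_{\min}+h^{1/2}y)|^2$ in $|y|$ is what makes this work, and care is needed that the constant $c_0$ can be chosen independent of $h$ — which follows from continuity of $\mathsf{Hess}\phi$ and positivity at $x_{\min}$. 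The uniformity of $o(1)$ in $c$ and in $\chi$ is then a consequence of the fact that all estimates above are made pointwise on the integrand with $c$-independent dominating functions, and $\chi$ only enters through the region $\{|x-x_{\min}|\geq\rho\}$ where it is bounded by $1$ and the contribution is already exponentially small.
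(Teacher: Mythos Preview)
Your proposal is correct and follows the same Laplace-method strategy as the paper: split into a neighbourhood of $x_{\min}$ and its complement, Taylor-expand the exponent, rescale by $h^{1/2}$, and invoke the $\Nb$-orthogonality of the $P_j$. The paper's version differs only cosmetically --- it works on a shrinking ball $D(x_{\min},h^\alpha)$ with $\alpha\in(\tfrac13,\tfrac12)$ and tracks explicit error rates $\mathscr{O}(h^{3\alpha-1})$ rather than invoking dominated convergence on a fixed ball, and it expands the square into $P_n$--$P_{n'}$, $Q_{n,m}$--$Q_{n',m}$, and mixed cross terms (the latter bounded via Cauchy--Schwarz) instead of treating the sum as a whole.
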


\begin{proof}
	Let $\alpha\in\left(\frac{1}{3},\frac{1}{2}\right)$, $n,n'\in \{0,\dots, k-1\}$. 
 
		In the proof, three types of terms will appear after a change of scale around $x_{\rm min}$ : $\braket{P_n, P_{n'}}_{\mathcal{B}}$, $\braket{P_n, Q_{n',m}}_{\mathcal{B}}$ and $\braket{Q_{n,m}, Q_{n',m}}_{\mathcal{B}}$  where $\langle\cdot,\cdot\rangle_{\mathcal{B}}$ is the scalar product associated with $\Nb$. Since the polynomials $(P_n)_{n\in\NN}$ are $\Nb$-orthogonal, we have $\braket{P_n, P_{n'}}_{\mathcal{B}} = 0$ if $n\ne n'$ and we will prove that $\braket{Q_{n,m}, Q_{n',m}}_{\mathcal{B}} = \mathcal{O}(h)$ and by Cauchy-Schwarz inequality $\braket{P_n, Q_{n',m}}_{\mathcal{B}} = \mathcal{O}(h^{1/2})$. More precisely, we have: 
	\begin{enumerate}[\rm i.]
		\item \label{st.estil2_1}
		Let us estimate the weighted scalar products related to $P_{n}$ for the weighted $L^2$-norm.
		
	Using the Taylor expansion of $\phi$ at $x_{\rm min}$, we get, for all $x\in D(x_{\rm min},h^\alpha)$,	\begin{equation}\label{eq.tayint}
		\frac{\phi(x)-\phi_{\rm min}}{h} =  \frac{1}{2h}\mathsf{Hess}_{x_{\min}}\phi(x-x_{\rm min},x-x_{\rm min}) + \mathscr{O}(h^{3\alpha-1})\,.
	\end{equation}
	By using the change of coordinates
\begin{equation}\label{eq.Lambda}
	\Lambda_h : x\longmapsto \frac{x-x_{\rm min}}{h^{1/2}}\,,
\end{equation}
	we find
	\begin{equation}\label{eq.norml2_1}\begin{split}
		&\int_{D(x_{\rm min},h^\alpha)}h^{-1}P_{n}P_{n'}\left(\frac{x_1+ix_2-z_{\rm min}}{h^{1/2}}\right) e^{-2(\phi(x)-\phi_{\rm min})/h}\dd x
		\\&\quad
		= (1 + \mathscr{O}(h^{3\alpha-1}))\int_{D(x_{\rm min},h^\alpha)}h^{-1}P_{n}P_{n'}\left(\frac{x_1+ix_2-z_{\rm min}}{h^{1/2}}\right) e^{-\frac{1}{h}\mathsf{Hess}_{x_{\min}}\phi(x-x_{\rm min},x-x_{\rm min})}\dd x
		\\&\quad
		= (1 + \mathscr{O}(h^{3\alpha-1}))\int_{D(0,h^{\alpha-1/2})}P_{n}P_{n'}\left(y\right) e^{-\mathsf{Hess}_{x_{\min}}\phi(y,y)}\dd y
		\\&\quad
		= (1 + \mathscr{O}(h^{3\alpha-1}))\left(\langle P_n, P_{n'}\rangle_{\mathcal{B}}-\int_{\CC\setminus D(0,h^{\alpha-1/2})}P_{n}P_{n'}\left(y\right) e^{-\mathsf{Hess}_{x_{\min}}\phi(y,y)}\dd y\right)
		\\&\quad
		= (1 + \mathscr{O}(h^{3\alpha-1}))\langle P_n, P_{n'}\rangle_{\mathcal{B}}+\mathscr{O}(h^\infty)\,,
	\end{split}\end{equation}
	where the last equality follows from Assumption \eqref{eq.a3} in Theorem \ref{theo.main}.

	We recall Assumptions \eqref{eq.a2} and \eqref{eq.a3} of Theorem \ref{theo.main}. Then, by the Taylor expansion of $\phi$ at $x_{\rm min}$, we deduce that
	\begin{equation}\label{eq.tayext}
		\inf_{\Omega\setminus D(x_{\rm min},\ h^{\alpha})}\phi\geq\phi_{\rm min} + \frac{\lambda_{\rm min}}{2}h^{2\alpha}(1+\mathscr{O}(h^\alpha))\,,
	\end{equation}
	where $\lambda_{\rm min}>0$ is the lowest eigenvalue of $\mathsf{Hess}_{x_{\min}}\phi $.
	Since $P_n$ is of degree $n$, there exists $C>0$ such that
	\[
		\sup_{x\in\Omega}\left|h^{-\frac{1}{2}}P_{n}\left(\frac{x_1+ix_2-z_{\rm min}}{h^{1/2}}\right)\right|\leq Ch^{-\frac{n+1}{2}}\,.
	\]
	Using this with \eqref{eq.tayext}, we get
	\begin{multline}\label{eq.norml2_2}
\left|\int_{\Omega \setminus D(x_{\rm min},h^\alpha)}h^{-1}\chi^2 P_{n}P_{n'}\left(\frac{x_1+ix_2-z_{\rm min}}{h^{1/2}}\right) e^{-2(\phi(x)-\phi_{\rm min})/h}\dd x\right|\\
\leq Ch^{-\frac{n+1}{2}}h^{-\frac{n'+1}{2}}e^{-\lambda_{\rm min}h^{2\alpha-1}(1+ \mathscr{O}(h^\alpha))} = \mathscr{O}(h^\infty)\,.
	\end{multline}
	From \eqref{eq.norml2_1} and \eqref{eq.norml2_2}, we find
	\begin{multline}\label{eq.Pn}
	\int_{\Omega}h^{-1}\chi^2 P_{n}P_{n'}\left(\frac{x_1+ix_2-z_{\rm min}}{h^{1/2}}\right) e^{-2(\phi(x)-\phi_{\rm min})/h}\dd x\\
	=(1 + \mathscr{O}(h^{3\alpha-1}))\langle P_n, P_{n'}\rangle_{\mathcal{B}}+\mathscr{O}(h^\infty)\,.
	\end{multline}
	
		\item Let us now deal with the weighted scalar products related to the $Q_{n,m}$. Let $u\in \mathscr{H}^2(\Omega)$ and $z_0\in D(z_{\rm min},h^\alpha)$. 
	By the Cauchy formula (see \cite[Theorem $10.4$]{duren2000theory}) and the Cauchy-Schwarz inequality,
	\begin{equation}\label{eq.CCS}\begin{split}
		|u^{(k)}(z_0)|
		&= \frac{k!}{2\pi}\left|
			\int_{\partial \Omega}\frac{u(z)}{(z-z_0)^{k+1}}\dd z
		\right|
		\\&
		\leq \frac{k!}{2\pi\sqrt{\min_{\partial \Omega}\pa_\n \phi}}\Nh(u)\left(
			\int_{\partial\Omega}\frac{|\dd z|}{|z-z_0|^{2(k+1)}}
		\right)^{1/2}
		\\&
		\leq \frac{k!}{2\pi\sqrt{\min_{\partial \Omega}\pa_\n \phi}}\Nh(u)\left(
			\int_{\partial\Omega}\frac{|\dd z|}{(|z-z_{\rm min}|-h^\alpha)^{2(k+1)}}
		\right)^{1/2}\leq C\Nh(u)\,.
	\end{split}\end{equation}
	With the Taylor formula for $u=Q_{n,m}$ at $z_{\rm min}$, this gives
	\[
		|Q_{n,m}(z_0)|\leq C|z_0-z_{\rm min}|^k\Nh(Q_{n,m})\,.
	\]
	Using \eqref{eq.tayint}, this implies
	\begin{equation}\label{eq.norml2_3}\begin{split}
		&\int_{D(x_{\rm min},h^\alpha)}|h^{-\frac{1+n}{2}}Q_{n,m}(x_1+ix_2)|^2 e^{-2(\phi(x)-\phi_{\rm min})/h}\dd x
		\\&\quad
		\leq Ch^{-(1+n)}\int_{D(x_{\rm min},h^\alpha)}|(x_1+ix_2)-z_{\rm min}|^{2k} e^{-2(\phi(x)-\phi_{\rm min})/h}\dd x
		\\&\quad
		\leq Ch^{k-n}\Nb(z^k)^2\leq Ch\,.
	\end{split}\end{equation}		
	Using \eqref{eq.tayext} and $Q_{n,m}\in W^{1,\infty}(\Omega)\subset L^2(\Omega)$, we get
	\begin{equation}\label{eq.norml2_4}\begin{split}
		&\int_{\Omega \setminus D(x_{\rm min},h^\alpha)}|h^{-\frac{1+n}{2}}\chi Q_{n,m}(x_1+ix_2)|^2 e^{-2(\phi(x)-\phi_{\rm min})/h}\dd x
		\\&\quad
		\leq Ch^{-(n+1)}\norm{Q_{n,m}}_{L^2(\Omega)}^2e^{-\lambda_{\rm min}h^{2\alpha-1}(1+\mathscr{O}(h^\alpha))} = \mathscr{O}(h^\infty)\,.
	\end{split}\end{equation}	
	With \eqref{eq.norml2_3} and \eqref{eq.norml2_4}, we deduce
	\begin{equation}\label{eq.Qnm}
	\int_{\Omega}|h^{-\frac{1+n}{2}}\chi Q_{n,m}(x_1+ix_2)|^2 e^{-2(\phi(x)-\phi_{\rm min})/h}\dd x=\mathscr{O}(h)\,.
	\end{equation}
	
	With the Cauchy-Schwarz inequality, and \eqref{eq.Qnm},
	\begin{equation}\label{eq.norml2_8}
\int_{\Omega}\chi^2 h^{-\frac{1+n}{2}}Q_{n,m}(x_1+ix_2)) h^{-\frac{1+n'}{2}}\overline{Q_{n',m}(x_1+ix_2)}e^{-2(\phi(x)-\phi_{\rm min})/h}\dd x=\mathscr{O}(h)\,.
\end{equation}

		\item Let us now consider the scalar products involving the $P_{n}$ and the $Q_{n',m}$.
	Using \eqref{eq.Pn}, \eqref{eq.norml2_8}, and the Cauchy-Schwarz inequality, we get
	\begin{equation}\label{eq.norml2_7}
		\int_{\Omega}\chi^2 h^{-\frac{1}{2}}P_{n}\left(\frac{x_1+ix_2-z_{\rm min}}{h^{1/2}}\right) h^{-\frac{1+n'}{2}}\overline{Q_{n',m}(x_1+ix_2)} e^{-2(\phi(x)-\phi_{\rm min})/h}\dd x=\mathscr{O}(h^{1/2})\,.
\end{equation}
	\end{enumerate}
		The conclusion follows by expanding the square in the left-hand-side of \eqref{eq:normuV2} and by using \eqref{eq.Pn}, \eqref{eq.norml2_8}, \eqref{eq.norml2_7} .
\end{proof}
\begin{remark}\label{rem.free}
From Lemma \ref{lem:L2esti1}, we deduce that the vectors $\{\chi w_{j,h}\,,0\leq j\leq k-1\}$ are linearly independent for $h$ small enough.
\end{remark}

\subsection{Estimate of the energy}
The aim of this section is to bound from above the energy on an appropriate subspace.

\begin{lemma}\label{lem:estiannularUpper}
	There exists a family of functions $(\chi_h)_{h\in(0,1]}$ which satisfy \eqref{eq:defl0} and such that, for all $w_h = \sum_{j=0}^{k-1}c_j w_{j,h}\in V_{h,k,\rm sup}$ with $c_0,\dots c_{k-1}\in \CC$, 
	\begin{multline*}
	4\int_{\Omega}h^2e^{-2\phi/h} |\partial_{\overline{z}}(\chi_{h}w_h)|^2\dd x\\
	 \leq 2h^{1-k}|c_{k-1}|^2\Nh\left((z-z_{\min})^{k-1}-Q_{k-1,m}\right)+o(1)h^{1-k}\|c\|^2_{\ell^2}\,.
	\end{multline*}
	Here, $o(1)$ does not depend on $c_0,\dots c_{k-1}$.
\end{lemma}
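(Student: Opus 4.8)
The strategy is to construct the cutoff $\chi_h$ explicitly near $\partial\Omega$ by importing the one-dimensional optimizers of the annular problem from Lemma~\ref{lem.opt}, and then to compute $4\int_\Omega h^2 e^{-2\phi/h}|\partial_{\bar z}(\chi_h w_h)|^2\,\dd x$ by splitting the integral into the bulk region $\Omega_{\ell_0}=\{\mathsf{d}(x,\partial\Omega)>\ell_0\}$, where $\chi_h\equiv 1$, and the tubular collar $T_{\ell_0}=\Omega\setminus\Omega_{\ell_0}$.

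\textit{Step 1: the bulk region.} On $\Omega_{\ell_0}$ we have $\chi_h\equiv 1$, so $\partial_{\bar z}(\chi_h w_h)=\partial_{\bar z}w_h$, and $w_h=\sum_j c_j w_{j,h}$ is holomorphic there (each $P_n((z-z_{\min})/h^{1/2})$ is entire and each $Q_{n,m}$ is holomorphic on $\Omega$). Hence $\partial_{\bar z}w_h=0$ on $\Omega_{\ell_0}$ and this region contributes exactly $0$. This is the whole point of choosing holomorphic building blocks: the energy is supported in the collar.

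\textit{Step 2: the collar.} On $T_{\ell_0}$ we introduce tubular coordinates $(s,\tau)$ where $s$ parametrizes $\partial\Omega$ and $\tau=\mathsf{d}(x,\partial\Omega)\in(0,\ell_0)$ (as in the reference to the proof of Lemma~\ref{lem:estiannularUpper}). In these coordinates one Taylor-expands $\phi(s,\tau)=\tau\,\partial_{\mathbf n}\phi(s)+\mathscr{O}(\tau^2)$ (using $\phi=0$ on $\partial\Omega$) and rewrites $|\partial_{\bar z}(\chi_h w_h)|^2$ in terms of $(\partial_\tau-i\partial_s)$ up to the metric factors, so that
\[
4\int_{T_{\ell_0}} h^2 e^{-2\phi/h}|\partial_{\bar z}(\chi_h w_h)|^2\,\dd x
= (1+o(1))\int_{\partial\Omega}\int_0^{\ell_0} h^2 e^{2\tau\partial_{\mathbf n}\phi/h}\,\big|(\partial_\tau-i\partial_s)(\chi_h w_h)\big|^2\,\dd\tau\,\dd s\,.
\]
Now choose $\chi_h(s,\tau)$ so that $\chi_h w_h$ restricted to each line $\{s=\text{const}\}$ is, up to the value of $w_h$ on $\partial\Omega_{\ell_0}$, the one-dimensional minimizer produced in Lemma~\ref{lem.opt} for the weight $e^{2\tau\partial_{\mathbf n}\phi/h}$ with Dirichlet condition at $\tau=0$ and matching $w_h$ at $\tau=\ell_0$. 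Equivalently, $\chi_h$ is the optimizer of problem~\eqref{eq:9} localized line by line. With $\delta=\ell_0$ fixed and $h\to 0$, the condition $\delta/h\to\infty$ of \eqref{eq:5} holds, so the one-dimensional optimal energy on the line $s$ is $(1+o(1))\,h\,\partial_{\mathbf n}\phi(s)\,|w_h(s,\ell_0)|^2$ in leading order, and summing (integrating) in $s$ gives $(1+o(1))\,h\int_{\partial\Omega}\partial_{\mathbf n}\phi(s)\,|w_h|^2\,\dd s$. Tracking through the $h$-prefactor, $4h^2\times(\text{one-dim energy})/(\text{unit})$ produces the factor $h^{1-k}$ once one recalls the $h$-powers inside $w_{j,h}$.

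\textit{Step 3: identifying the Hardy term.} The boundary value $w_h|_{\partial\Omega}$ equals $\sum_j c_j\big(h^{-1/2}P_j((z-z_{\min})/h^{1/2})-h^{-(1+j)/2}Q_{j,m}(z)\big)$. The dominant $h$-power as $h\to 0$ is the top one, $j=k-1$: the polynomial $P_{k-1}((z-z_{\min})/h^{1/2})$ has leading term $h^{-(k-1)/2}(z-z_{\min})^{k-1}$, so $h^{-1/2}P_{k-1}((z-z_{\min})/h^{1/2})=h^{-k/2}\big((z-z_{\min})^{k-1}+\mathscr{O}(h^{1/2})\big)$ on $\partial\Omega$, matching the power $h^{-(1+(k-1))/2}=h^{-k/2}$ of the $Q_{k-1,m}$ term. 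All lower $j$ contribute strictly smaller powers of $h$ on the boundary, hence are absorbed into the $o(1)h^{1-k}\|c\|_{\ell^2}^2$ error (after a Cauchy–Schwarz in $j$, using the boundedness of the $P_j,Q_{j,m}$ on $\partial\Omega$). Therefore
\[
h\int_{\partial\Omega}\partial_{\mathbf n}\phi\,|w_h|^2\,\dd s
= h^{1-k}|c_{k-1}|^2\int_{\partial\Omega}\partial_{\mathbf n}\phi\,\big|(z-z_{\min})^{k-1}-Q_{k-1,m}(z)\big|^2\,\dd s + o(1)h^{1-k}\|c\|_{\ell^2}^2\,,
\]
and the first term is exactly $h^{1-k}|c_{k-1}|^2\Nh\big((z-z_{\min})^{k-1}-Q_{k-1,m}\big)^2$. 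Combining with the factor $2$ from the leading asymptotics of the one-dimensional optimal constant $\tfrac{\Phi/h}{1-e^{-2\delta\Phi/h}}\sim 2\Phi/h$ (more precisely its local-in-$s$ analogue with $\partial_{\mathbf n}\phi(s)$) yields the claimed bound. I should also note that $\chi_h$ built this way is Lipschitz and satisfies \eqref{eq:defl0} with $\ell_0$ the fixed collar width, and that the $o(1)$ is uniform in $c$ because every estimate above is a quadratic form bound with $c$-independent coefficients.

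\textbf{Main obstacle.} The delicate point is Step~2: making the tubular-coordinate reduction rigorous when $\partial\Omega$ is merely $\mathscr{C}^2$ (so the coordinate change and the metric coefficients are only $\mathscr{C}^1$), and, more importantly, showing that replacing the true minimization over $\chi_h$ on the collar by the line-by-line one-dimensional optimizer of Lemma~\ref{lem.opt} is legitimate despite the coupling through $\partial_s$. The resolution is that the $\partial_s$-derivative of the optimizer is controlled (the one-dimensional profiles depend smoothly on $s$ through $\partial_{\mathbf n}\phi(s)$ and through the boundary data $w_h(s,\ell_0)$, whose $s$-derivatives are $\mathscr{O}(h^{1-k})$ in $L^2$), so the cross terms and the $|\partial_s(\chi_h w_h)|^2$ contribution are lower order relative to $h^{1-k}$ after using $\delta/h\to\infty$; quantitatively one bounds them by $C\ell_0\,h^{1-k}\|c\|_{\ell^2}^2$ times a factor that the exponential weight $e^{2\tau\partial_{\mathbf n}\phi/h}$ makes $o(1)$. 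This is where the hypotheses on $\chi_h$ and the scaling \eqref{eq:5} are genuinely used.
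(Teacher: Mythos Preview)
Your overall strategy matches the paper's, but there is a genuine gap in Step~2: you take the collar width to be the \emph{fixed} number $\ell_0$ from \eqref{eq:defl0}, writing ``With $\delta=\ell_0$ fixed and $h\to 0$, the condition $\delta/h\to\infty$ of \eqref{eq:5} holds.'' But \eqref{eq:5} also requires $\delta^2/h\to 0$, which fails for fixed $\delta$. This is not a harmless slip. The Taylor expansion $\phi(s,\tau)=-\tau\,\partial_{\mathbf n}\phi(s)+\mathscr{O}(\tau^2)$ produces a multiplicative factor $e^{\mathscr{O}(\tau^2)/h}$ in the weight, and with $\tau$ ranging up to a fixed $\ell_0$ this factor blows up rather than being $1+o(1)$; your displayed ``$=(1+o(1))$'' in Step~2 is therefore unjustified. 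Similarly, your Step~3 silently replaces $w_h(s,\ell_0)$ by the boundary trace $w_h(s,0)$; the difference costs $\mathscr{O}(\ell_0)\|c_h\|_{\ell^2}$ with $\|c_h\|_{\ell^2}^2\sim h^{-k}\|c\|_{\ell^2}^2$, and after the one-dimensional optimization this gives an error of order $\ell_0\,h^{1-k}\|c\|_{\ell^2}^2$, which is not $o(h^{1-k})\|c\|_{\ell^2}^2$ unless the collar shrinks. The paper fixes both issues by taking the actual collar width to be $\varepsilon=h|\log h|$ (any $\varepsilon$ with $\varepsilon/h\to\infty$ and $\varepsilon^2/h\to 0$ would do); since eventually $\varepsilon<\ell_0$, the resulting $\chi_h$ still satisfies \eqref{eq:defl0} with the same fixed $\ell_0$.

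A second, smaller point: you carry the full operator $(\partial_\tau-i\partial_s)(\chi_h w_h)$ through the collar, which then forces you to control $\partial_s w_h$ and to argue that $\chi_h$ can be chosen independently of $c$. The paper instead uses at the very first line that $w_h$ is holomorphic, so $4|\partial_{\bar z}(\chi_h w_h)|^2=|w_h|^2\,|\nabla\chi_h|^2$, reducing the energy to $h^2\int_{\mathrm{supp}\,\nabla\chi_h}|w_h|^2 e^{-2\phi/h}|\nabla\chi_h|^2\,\dd x$. This decouples $\chi_h$ from $w_h$ (so $\chi_h$ is manifestly $c$-independent) and leaves only $|\partial_t\rho|^2+|\partial_s\rho|^2$ to estimate; the $\partial_s\rho$ contribution is then handled directly via the $\partial_\alpha\rho_{\alpha,\varepsilon}$ bound in Lemma~\ref{lem.opt}(c). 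Your Step~3 extraction of the leading Hardy term is correct in spirit; the paper packages it via a compactness argument on the family of norms $h\mapsto N_h(c)$ (continuous down to $h=0$), which simultaneously controls $\|c_h\|_{\ell^2}$ by $\Nh(w_h)$ and lets the $\varepsilon\|c_h\|_{\ell^2}^2$ remainder be absorbed.
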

\begin{proof}
Let $\chi$ be any function satisfying \eqref{eq:defl0}.
We have 
\[
4\int_{\Omega}h^2e^{-2\phi/h} |\partial_{\overline{z}}\chi w_h|^2\dd x=h^2\int_{\Omega} |w_h|^2e^{-2\phi/h}|\nabla\chi|^2 \dd x=h^2\int_{\mathsf{supp \nabla\chi}} |w_h|^2e^{-2\phi/h}|\nabla\chi|^2 \dd x\,,
\]
where we have used that $|\nabla\chi|^2=4|\partial_{\overline{z}}\chi|^2$ since $\chi$ is real  and $\partial_{\overline z}w_h = 0$. 

The proof is now divided into three steps. First, we introduce tubular coordinates near the boundary, then we make an explicit choice of $\chi$, and finally, we control the remainders.

\begin{enumerate}[\rm i.]
	\item
	We only need to define $\chi$ in a neighborhood of $\Gamma=\partial\Omega$. To do this, we use the tubular coordinates given by the map
	\[
 		\eta: 	
 			\begin{array}{l}
 				\RR\slash\left( |\Gamma|\ZZ \right)\times (0, t_0) \to\Omega \\
 				(s,t) \mapsto \gamma(s) - t\n(s)	
			 \end{array}
	\]
	for $t_0$ small enough, $\gamma$ being a parametrization of $\Gamma$ with $|\gamma'(s)|=1$ for all $s$, and $\n (s)$ the unit outward pointing normal at point $\gamma(s)$ (see e.g. \cite[\S F]{FH11}). We let
	\[
		\eta^{-1}(x) = (s(x),t(x))\,, \mbox{ for all }x\in \eta\left(\RR\slash\left( |\Gamma|\ZZ \right)\times (0, t_0)\right)\,,
	\] 
	the inverse map to $\eta$.
	We let, for all $x\in \Omega$,
	\[
		\chi(x) = 
		\begin{cases}
			\rho\left(s(x),\mathsf{d}(x,\partial\Omega)\right)&\mbox{ if }\mathsf{d}(x,\partial\Omega)\leq\eps\,,\\
			1&\mbox{ otherwise.}
		\end{cases}
	\]
	The parameter $\eps>0$ and the function $\rho$ are to be determined. We assume that $\rho(s,0)=0$ and $\rho(s,t)=1$ when $t\geq \eps$. We will choose $\eps=o(h^\frac{1}{2})$. 
	
	Since the metric induced by the change of variable is the Euclidean metric modulo $\mathscr{O}(\eps)$, we get
	\begin{multline*}
		h^2\int_{\mathsf{supp \nabla\chi}} |w_h|^2e^{-2\phi/h}|\nabla\chi|^2 \dd x\\	
		\leq (1+\mathscr{O}(\eps)) h^{2}\int_{\Gamma}\int_{0}^{\eps} |\tilde w_h|^2e^{-2\tilde \phi(s,t)/h}(\left|\partial_{t}\rho\right|^2+\left|\partial_{s}\rho\right|^2) \dd s \dd t\,,
	\end{multline*}
	where $\tilde w_h = w_h\circ \eta$ and $\tilde \phi = \phi\circ \eta$.
	Thus, by using the Taylor expansion of $\tilde\phi$ at $t=0$, we get uniformly in $s\in\Gamma$,
	\[\tilde \phi(s,t)=t\partial_{t}\tilde\phi(s,0)+\mathscr{O}(t^2)=-t\partial_{\textbf{n}}\phi(s,0)+\mathscr{O}(\eps^2)\,,\]
	and
	\begin{multline*}
		h^2\int_{\mathsf{supp \nabla\chi}} |w_h|^2e^{-2\phi/h}|\nabla\chi|^2 \dd x\\
		\leq (1+\mathscr{O}(\eps+\eps^2/h)) h^{2}\int_{\Gamma} \int_{0}^{\eps}|\tilde w_h|^2e^{2t\partial_{\textbf{n}}\phi(s)/h}(\left|\partial_{t}\rho\right|^2+\left|\partial_{s}\rho\right|^2) \dd s \dd t\,.
	\end{multline*}
	Since $Q_{n,m}\in W^{1,\infty}(\Omega)$, we have $\partial_t\tilde Q_{n,m}\circ \eta\in L^\infty(\Gamma\times (0,\eps))$ and by using the Taylor expansion of $\tilde w$ near $t=0$, we get
	\[\begin{split}
		\tilde w_h(s,t)  
		&= \left(\sum_{j=0}^{k-1}c_jw_{j,h}\right)\circ \eta(s,t)
		= \tilde w_h(s,0)
		+\int_0^t\partial_t\tilde w_h(s,t')\dd t'
		\\&= \tilde w_h(s,0)
		+\mathscr{O}(\eps)\norm{c_h}_{\ell^2}\,,
	\end{split}\]
	where 
	\begin{equation}\label{eq.ch}
	c_h = (h^{-\frac{1}{2}}c_{0},\dots,h^{-\frac{k}{2}}c_{k-1})\,,
	\end{equation} 
	and $\norm{\cdot}_{\ell^2}$ is the canonical Euclidian norm on $\CC^k$.   
	Then,
	\begin{multline}\label{eq:estitubularup}
		h^2\int_{\mathsf{supp \nabla\chi}} |w_h|^2e^{-2\phi/h}|\nabla\chi|^2 \dd x\\
		\leq (1+\mathscr{O}(\eps+\eps^2/h)) h^{2}\int_{\Gamma} |\tilde w_h(s,0)|^2\int_{0}^{\eps}e^{2t\partial_{\textbf{n}}\phi(s)/h}(\left|\partial_{t}\rho\right|^2+\left|\partial_{s}\rho\right|^2) \dd s \dd t\,
		\\+Ch^{2}\varepsilon \norm{c_h}_{\ell^2}^2\int_{\Gamma}\int_{0}^{\eps}e^{2t\partial_{\textbf{n}}\phi(s)/h}(\left|\partial_{t}\rho\right|^2+\left|\partial_{s}\rho\right|^2) \dd s \dd t\,.
		\end{multline}
	\item
	For the right hand side of \eqref{eq:estitubularup} to be small, we choose $\rho$ to minimize $\partial_t \rho$ far from the boundary. The optimization of 
	\[
		\rho \mapsto \int_{0}^{\eps}e^{2t\partial_{\textbf{n}}\phi/h}\left|\partial_{t}\rho\right|^2\dd t\,,
	\]
	gives us the weight $\partial_\n \phi$. More precisely, 
	Lemma \ref{lem.opt} with $\alpha=2\partial_{\textbf{n}}\phi/h>0$ suggests to consider the trial state defined, for $t\leq \eps$, by
	\[\rho(s,t)=\frac{1-e^{-2t\partial_{\textbf{n}}\phi(s)/h}}{1-e^{-2\eps\partial_{\textbf{n}}\phi(s)/h}}\,,\]
	and by $1$ otherwise. By Lemma \ref{lem.opt}, we get
	\[
		\int_{0}^{\eps}e^{2t\partial_{\textbf{n}}\phi/h}\left|\partial_{t}\rho\right|^2\dd t = 		\frac{2\partial_{\textbf{n}}\phi/h}{1-e^{-\eps2\partial_{\textbf{n}}\phi/h}}\,,
	\]
	and
	\[\begin{split}
		\int_{0}^{\eps}e^{2t\partial_{\textbf{n}}\phi/h}\left|\partial_{s}\rho\right|^2\dd t 
		 = |\pa_s \alpha|^2\int_{0}^{\eps}e^{2t\partial_{\textbf{n}}\phi/h}\left|\partial_{\alpha}\rho_{\alpha,\eps}\right|^2\dd t 
		&\leq  Ch^{-2}\left(
		\alpha^{-3} + e^{-\alpha\eps}\eps^2\alpha^{-1}
		\right)
		\\&\leq C\left(
			h + e^{-\eps2\partial_{\textbf{n}}\phi/h}\eps^2h^{-1}
		\right)\,.
	\end{split}\]
	We can choose $\eps=h|\log h|$  
	so that 
		\[
			\int_\Gamma \int_0^\eps  |\tilde w_h(s,0)|^2 e^{2t\partial_{\textbf{n}}\phi(s)/h}\left|\partial_{t}\rho\right|^2\dd s \dd t = (1+o(1))h^{-1}\int_\Gamma 2\pa_\n\phi|\tilde w_h(s,0)|^2\dd s\,,
		\]
	and	
	 \eqref{eq:estitubularup} becomes
	\begin{equation}\label{eq.ubfq}
		h^2\int_{\mathsf{supp \nabla\chi}} |w_h|^2e^{-2\phi/h}|\nabla\chi|^2 \dd x
		\leq (1+o(1))h\left(\int_\Gamma 2\pa_\n\phi|\tilde w_h(s,0)|^2\dd s
		+C\eps\norm{c_h}_{\ell^2}^2\right)\,.
	\end{equation}
	\item

	 Let us consider, for all $h\geq 0$,
	\[
		N_h:c\in\CC^k\mapsto \left(\int_\Gamma \pa_\n\phi\left|
		\sum_{j=0}^{k-1}c_jh^{\frac{1+j}{2}}\tilde w_{j,h}(s,0)
		\right|^2\dd s\right)^{1/2}\,,
	\] 
	where we recall that
	\[w_{j,h}(z)=h^{-\frac{1}{2}}P_{j}\left(\frac{z-z_{\min}}{h^{\frac{1}{2}}}\right)-h^{-\frac{j+1}{2}}Q_{j,m}(z)\,.\]
	The application $\mathbb{C}^k\times[0,1]\ni (c,h)\mapsto N_{h}(c)$ is well defined and continuous (since the degree of $P_{j}$ is $j$). Note, in particular, that
	\[
		N_0(c)=\left(\int_\Gamma \pa_\n\phi\left|
		\sum_{j=0}^{k-1}c_j[(z-z_{\min})^j-Q_{j,m}(z)]
		\right|^2\dd s\right)^{1/2}\,.
	\]

	Notice that  
	\begin{equation}\label{eq.Nhch}
	N_{h}(c_{h})^2=\int_\Gamma \pa_\n\phi|\tilde w_h(s,0)|^2\dd s=N^2_{\mathcal{H}}( w_{h})\,, 
	\end{equation}
	where $c_{h}$ is defined in \eqref{eq.ch}. Since $N_{\mathcal{H}}$ is a norm, and recalling Remark \ref{rem.free}, we see that the application $N_{h}$ is a norm when $h\in(0,h_{0}]$. $N_{0}$ is also a norm (as we can see by using the Hardy norm and $Q_{j,m}\in\mathscr{H}_{k}^2(\Omega)$).
	
Let us define
	\[
		C_0 = \min_{
			\begin{array}{c}
				h\in[0,h_{0}]\\ \|c\|_{\ell^2=1}
			\end{array}}
			N_h(c)>0\,.
	\]
so that, for all $h\in[0,h_{0}]$, and all $c\in \CC^k$,
	\begin{equation}\label{eq.C0}
	C_{0}\norm{c}_{\ell^2}\leq N_h(c)\,.
	\end{equation}
	
\end{enumerate}
Using \eqref{eq.ubfq}, \eqref{eq.Nhch}, and replacing $c$ by $c_{h}$ in \eqref{eq.C0}, we conclude that
	\[\begin{split}
		h^2\int_{\mathsf{supp \nabla\chi}} |w_h|^2e^{-2\phi/h}|\nabla\chi|^2 \dd x
		&
		\leq 2(1+o(1))h\Nh(w_h)^2
		\,.
	\end{split}\]
	Let us now estimate $\Nh(w_h)$. From the triangle inequality, we get
	\[\Nh(w_h)\leq |c_{k-1}|\Nh(w_{k-1,h})+\sum_{j=0}^{k-2}|c_{j}|\Nh(w_{j,h})\,.\]
	Then, from degree considerations and the triangle inequality, we get, for $1\leq j\leq k-2$,
	\[\Nh(w_{j,h})=\mathscr{O}\left(h^{\frac{1-k}{2}}\right)\,,\]
	and
	\[\Nh(w_{k-1,h})=(1+o(1))h^{-\frac{k}{2}}\Nh\left((z-z_{\min})^{k-1}-Q_{k-1,m}\right)\,.\]
	Then, 
	\[\Nh(w_{h})^2\leq |c_{k-1}|^2h^{-k}\Nh\left((z-z_{\min})^{k-1}-Q_{k-1,m}\right)^2+o(h^{-k})\|c\|^2_{\ell^2}\,.\]

	This ends the proof.
\end{proof}

\subsection{Proof of Proposition \ref{prop:upper-bound}}

Let us define
$
	\widetilde V_{h,k,\rm sup} = \{\chi_h w_h,\, w_h\in V_{h,k,\rm sup}\},
$
where $V_{h,k,\rm sup}$ is defined in \eqref{eq.spaceupbd} and $\chi_h$ in Lemma \ref{lem:estiannularUpper}. 
By Lemmas \ref{lem:L2esti1} and \ref{lem:estiannularUpper}, we get
	\[
	\frac{4\int_{\Omega}h^2e^{-2\phi/h} |\partial_{\overline{z}}(w_h\chi_h)|^2\dd x}{\int_{\Omega}|w_h\chi_h|^2e^{-2(\phi-\phi_{\min})/h}\dd x}
	 \leq 2h^{1-k}\frac{|c_{k-1}|^2\Nh\left((z-z_{\min})^{k-1}-Q_{k-1,m}\right)^2}{\sum_{j=0}^{k-1}|c_j|^2\Nb(P_j)^2}+o(h^{1-k})\,,
	\]
for all $w_h = \sum_{j=0}^{k-1}c_j w_{j,h}\in V_{h,k,\rm sup}$ with $c\in\mathbb{C}^k\setminus\{0\}$. From the min-max principle\footnote{By Remark \ref{rem.free},
$
	\dim \widetilde V_{h,k,\rm sup} = k
$ for $h$ small enough.}, it follows
\[\lambda_k(h)\leq 2h^{1-k}\Nh\left((z-z_{\min})^{k-1}-Q_{k-1,m}\right)^2\sup_{c\in\mathbb{C}^k\setminus\{0\}}\frac{|c_{k-1}|^2}{\sum_{j=0}^{k-1}|c_j|^2\Nb(P_j)^2}e^{2\phi_{\min}/h}+o(h^{1-k})\,.\]
Since 
\[\sup_{c\in\mathbb{C}^k\setminus\{0\}}\frac{|c_{k-1}|^2}{\sum_{j=0}^{k-1}|c_j|^2\Nb(P_j)^2}=\Nb(P_{k-1})^{-2}\,,\]
we deduce
\[
	\limsup_{h\to 0}	h^{k-1}e^{-2\phi_{\min}/h}\lambda_{k}(h)\leq  2\left(\frac{\Nh\left(
			(z-z_{\rm min})^{k-1}
			-Q_{k-1,m}
		\right)}{\distb \left(z^{k-1},\mathcal{P}_{k-2}\right)}\right)^2\,.
\]
Taking the limit $m\to+\infty$, it follows
\[
	\limsup_{h\to 0}	h^{k-1}e^{-2\phi_{\min}/h}\lambda_{k}(h)\leq  C_{\sup}(k)\,.
\]

\subsection{Computation of $C_{\sup}(k)$ in the radial case}
Let $k\in \NN^*$. Let us assume that $\Omega$ is the disk of radius $R$ centered at $0$, and that $B$ is radial. In this case $x_{\min}=0$, $\partial_\n\phi$ is constant and $\mathsf{Hess}_{x_{\min}}\phi = 
B(0){\rm Id}/2$.

Thus,
\[\disth((z-z_{\rm min})^{k-1}, \mathscr{H}^2_{k}(\Omega))=\disth(z^{k-1}, \mathscr{H}^2_{k}(\Omega))=\Nh(z^{k-1})^2=2\pi\partial_{\n}\phi R^{2k-1}\,,\]
and we notice that $P_{n}(z)=z^n$ (see Notation \ref{not.Pn}) so that
\[\begin{split}
	\distb\left(z^{k-1},\mathcal{P}_{k-2}\right) &= \Nb(P_{k-1})^2 \\
	&= \int_{\mathbb{R}^2} \left|y\right|^{2(k-1)}e^{-\mathsf{Hess}_{x_{\min}}\phi(y, y)} \dd y=2\pi\int_{0}^{+\infty}\rho^{2k-1}e^{-
B(0)\rho^2/2}\mathrm{d} \rho\\	
	&=\frac{2\pi2^k}{B(0)^k}\int_{0}^{+\infty} \rho^{2k-1}e^{-\rho^2}\mathrm{d}\rho
	=\frac{2\pi2^{k-1} \Gamma(k)}{B(0)^k}=\frac{2\pi2^{k-1} (k-1)!}{B(0)^k}\,,
\end{split}\]
We get
\[C_{\sup}(k)
=\frac{B(0)^k\Phi R^{2k-2}}{2^{k-2}(k-1)!}
\,.\]
 Note that this formula extends the upper bound obtained in \cite{HP17} for constant magnetic fields on the disc.

\section{On the magnetic Cauchy-Riemann operators}\label{sec.magnCR}

In this section, $U$ will denote an open bounded subset of $\mathbb{R}^2$. It will be either $\Omega$ itself, or a smaller open set.

As we already observed (see \eqref{eq:defDP}), the Dirichlet-Pauli operator, considered only as a differential operator, is the square of the magnetic Dirac operator $\sigma\cdot(\p-\mathbf{A})$. It can be written as
\begin{equation}\label{defi:zigzag}
\sigma\cdot (\p-A)= 
\begin{pmatrix}
0&d_{h,A}\\
{d^{\times}_{h,A}}&0
\end{pmatrix}
\end{equation} 
where $d_{h,A}$ and $d^{\times}_{h,A}$ are the magnetic Cauchy-Riemann operators:
\[d_{h,A}=-2ih\partial_{z}-A_{1}+iA_{2}\,,\qquad d^{\times}_{h,A}=-2ih\partial_{\overline{z}}-A_{1}-iA_{2}\,.\]

Let $(d_{h,A},\mathsf{Dom}(d_{h,A}))$ be the operator on $L^2(U, \CC)$ acting as $d_{h,A}$ on $\mathsf{Dom}(d_{h,A}) = H^1_0(U;\CC)$.
\subsection{Properties of $d_{1,0}$ and $d_{1,0}^*$}
In this part, we study the operators $d_{h,A}$ and $d_{h,A}^*$ in the non-magnetic case $B=0$ with $h=1$ in order to get describe their properties in this simplified setting in which $-\Delta = d_{1,0}^*d_{1,0}$. Various aspects of this section can be related to the spectral analysis of the \enquote{zig-zag} operator (see \cite{S95}). 
The next section will be related to the magnetic case that is needed in our study.
\begin{lemma}\label{lem.zigzag_nm}
	Assume that $U$ is of class $\mathscr{C}^2$.
	The following properties hold.
	\begin{enumerate}[\rm (a)]
		\item The operator $(d_{1,0},\mathsf{Dom}(d_{1,0}))$ is closed with closed range.
		\item The domain of $d_{1,0}^*$ is given by
		\[\begin{split}
			\mathsf{Dom}(d_{1,0}^*) 
			&= \{u\in L^2(U;\CC)\,, \partial_{\overline{z}}u\in L^2(U;\CC) \}
			\\&=  \{u\in L^2(U;\CC)\,, \partial_{\overline{z}}u=0 \}+H^1(U;\CC)\,,
		\end{split}\]
		and $d^*_{1,0}$ acts as $d_{1,0}^\times$. In particular,
		\[
		\ker(d_{1,0}^*)= \{u\in L^2(U;\CC)\,, \partial_{\overline{z}}u=0 \}\,.
	\]
		\item We have
		\[\begin{split}
			\ker(d_{1,0}^*)^\perp\cap \mathsf{Dom}(d_{1,0}^*)  
			= \{d_{1,0}w\,, w\in H^1_0(U;\CC)\cap H^2(U;\CC)\}\subset H^1(U;\CC)\,,
		\end{split}\]	
		and there exists $C>0$ such that, for all $v\in \ker(d_{1,0}^*)^\perp\cap \mathsf{Dom}(d_{1,0}^*)$,
		\[
			\norm{v}_{H^1(U)}\leq C\norm{d_{1,0}^* v}_{L^2(U)}\,.
		\]
	\end{enumerate}
\end{lemma}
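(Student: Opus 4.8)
# Proof Plan for Lemma \ref{lem.zigzag_nm}

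\textbf{Overall strategy.} The three parts build on each other, and the whole lemma is essentially a restatement of standard elliptic/functional-analytic facts about the $\overline\partial$-operator once one identifies $d_{1,0} = -2i\partial_z$ and $d_{1,0}^\times = -2i\partial_{\overline z}$. The plan is to establish (a) by combining the classical elliptic estimate for $-\Delta = \tfrac14 d_{1,0}^\times d_{1,0}$ (up to constants) with the Poincaré inequality on the bounded set $U$; then to identify the adjoint in (b) by a direct integration-by-parts argument together with interior elliptic regularity for $\partial_{\overline z}$; and finally to get (c) from (a), (b) and the closed range theorem.

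\textbf{Part (a).} First I would show that $(d_{1,0},\mathsf{Dom}(d_{1,0}))$ with $\mathsf{Dom}(d_{1,0}) = H^1_0(U;\CC)$ is closed: since $\|d_{1,0}u\|_{L^2(U)}^2 = 4\|\partial_z u\|_{L^2(U)}^2$ and, for $u\in H^1_0(U;\CC)$, integration by parts gives $\|\partial_z u\|_{L^2(U)} = \|\partial_{\overline z}u\|_{L^2(U)}$, hence $\|d_{1,0}u\|_{L^2(U)}^2 = 2\|\nabla u\|_{L^2(U)}^2$. Combined with the Poincaré inequality on the bounded set $U$, the graph norm of $d_{1,0}$ is equivalent to the $H^1_0(U)$-norm, so closedness of the operator follows from completeness of $H^1_0(U;\CC)$, and the same two-sided bound shows $d_{1,0}$ has closed range (the range is isomorphic, via $d_{1,0}$, to the Hilbert space $H^1_0(U;\CC)$ up to equivalent norms; more carefully, if $d_{1,0}u_n\to f$ then $(u_n)$ is Cauchy in $H^1_0$, converges to some $u$, and $d_{1,0}u = f$).

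\textbf{Part (b).} To identify $\mathsf{Dom}(d_{1,0}^*)$: by definition $u\in\mathsf{Dom}(d_{1,0}^*)$ iff the linear form $v\mapsto \langle u, d_{1,0}v\rangle_{L^2(U)}$ is bounded on $H^1_0(U;\CC)$ for the $L^2$-norm; testing against $v\in\mathscr{C}^\infty_c(U)$ shows this forces $\partial_{\overline z}u\in L^2(U;\CC)$ in the distributional sense (since $d_{1,0}^* $ acts formally as $d_{1,0}^\times = -2i\partial_{\overline z}$), and conversely that condition suffices. The splitting $\{u : \partial_{\overline z}u = 0\} + H^1(U;\CC)$ then follows: given $u$ with $\partial_{\overline z}u =: g\in L^2(U;\CC)$, solve $\partial_{\overline z}w = g$ with $w\in H^1(U;\CC)$ (e.g. by the Cauchy transform / logarithmic potential of $g$ extended by $0$ outside $U$, which lies in $H^1_{\rm loc}(\RR^2)$ hence in $H^1(U)$ since $U$ is bounded), and write $u = (u-w) + w$ with $u-w$ holomorphic. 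The identity $\ker(d_{1,0}^*) = \{u : \partial_{\overline z}u = 0\}$ is then immediate. I would also note Weyl's lemma guarantees such $u$ are genuinely holomorphic functions.

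\textbf{Part (c) and the main obstacle.} For (c), by the abstract closed range theorem, since $d_{1,0}$ is closed with closed range, $\ker(d_{1,0}^*)^\perp = \overline{\mathrm{Ran}(d_{1,0})} = \mathrm{Ran}(d_{1,0}) = \{d_{1,0}w : w\in H^1_0(U;\CC)\}$; intersecting with $\mathsf{Dom}(d_{1,0}^*)$ and using that $d_{1,0}^\times d_{1,0}w = -4\Delta w\in L^2$ forces $w\in H^2(U;\CC)$ by elliptic regularity up to the $\mathscr{C}^2$ boundary (this is where the smoothness assumption on $U$ enters). The estimate $\|v\|_{H^1(U)}\le C\|d_{1,0}^* v\|_{L^2(U)}$ for $v = d_{1,0}w$, $w\in H^1_0\cap H^2$, reduces to $\|d_{1,0}w\|_{H^1(U)}\le C\|\Delta w\|_{L^2(U)}$, which follows from the $H^2$ a priori estimate $\|w\|_{H^2(U)}\le C\|\Delta w\|_{L^2(U)}$ for the Dirichlet Laplacian on a $\mathscr{C}^2$ bounded domain (combining interior regularity, boundary regularity, and the Poincaré inequality to absorb lower-order terms). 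I expect the main technical obstacle to be the careful justification of the $H^2$-regularity-up-to-the-boundary step and its packaging into the stated $H^1$ bound for $v$; everything else is routine once the operators are identified with $\partial_z, \partial_{\overline z}$.
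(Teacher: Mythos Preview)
Your proposal is correct and follows essentially the same route as the paper: the identity $\|d_{1,0}u\|_{L^2}^2 = \|\nabla u\|_{L^2}^2$ on $H^1_0$ plus Poincar\'e for (a), the distributional identification of the adjoint for (b), and elliptic regularity for the Dirichlet Laplacian to characterize $\ker(d_{1,0}^*)^\perp\cap\mathsf{Dom}(d_{1,0}^*)$ and obtain the $H^1$ bound in (c). Two harmless constants are off: one has $\|d_{1,0}u\|_{L^2}^2 = \|\nabla u\|_{L^2}^2$ (not $2\|\nabla u\|^2$) and $d_{1,0}^\times d_{1,0} = -\Delta$ (not $-4\Delta$), since $4\partial_z\partial_{\overline z}=\Delta$; also, your Cauchy-transform justification of the splitting in (b) is a welcome detail that the paper leaves implicit.
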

\begin{proof}

		Let $u\in \mathsf{Dom}(d_{1,0}) = H^1_0(U;\CC)$. One easily checks that
		\[\begin{split}
			\norm{d_{1,0}u}_{L^2(U)}^2 = \norm{\nabla u}_{L^2(U)}^2\,.
		\end{split}\]
		Hence, the Poincar\'e inequality ensures that $(d_{1,0},\mathsf{Dom}(d_{1,0}))$ is a closed operator with closed range.
Then, by definition of the domain of the adjoint,
		\[
			\mathsf{Dom}(d_{1,0}^*) \subset \{u\in L^2(U;\CC)\,, \partial_{\overline{z}}u\in L^2(U;\CC) \}\,.
		\]		
		Conversely, if $v\in\{u\in L^2(U;\CC)\,, \partial_{\overline{z}}u\in L^2(U;\CC) \}$, we have, for all $w\in\mathscr{C}^\infty_{0}(U)$,
		\[
			\braket{v, -2i\partial_zw}_{L^2(U)}=\braket{-2i\partial_{\overline z}v,w}_{L^2(U)}\,.
		\]
		By density, this equality can be extended to $w\in H^1_0(U;\CC)$. This shows, by definition, that $v\in \mathsf{Dom}(d_{1,0}^*)$ and $d_{1,0}^*v=-2i\partial_{\overline{z}}v$.
		
		Moreover, we have
		\[\begin{split}
			&\ker(d_{1,0}^*)^\perp\cap \mathsf{Dom}(d_{1,0}^*) 
			= {\rm ran}(d_{1,0})\cap \mathsf{Dom}(d_{1,0}^*) 
			\\&= \{d_{1,0}w\,, w\in H^1_0(U;\CC) \mbox{ and }  -2i\partial_{\overline{z}}(d_{1,0}w) = -\Delta w\in L^2(U;\CC)\}
			\\&= \{d_{1,0}w\,, w\in H^1_0(U;\CC)\cap H^2(U;\CC)\}\subset H^1(U;\CC)\,,
		\end{split}\]
		where the last equality follows from the elliptic regularity of the Laplacian. In particular, we get, for all $w\in H^1_0(U;\CC)\cap H^2(U;\CC)$,
		\[
			\norm{w}_{H^2(U)}\leq C\norm{\Delta w}_{L^2(U)}\,.
		\]
		Now, take $v\in \ker(d_{1,0}^*)^\perp\cap \mathsf{Dom}(d_{1,0}^*)$. We can write $v=d_{1,0}w$ with $w\in H^2(U; \mathbb{C})\cap H^1_{0}(U;\mathbb{C})$. We have $d^*_{1,0}v=-\Delta w$ so that 
		\[\norm{v}_{H^1(U)}\leq C\norm{d_{1,0}^* v}_{L^2(U)}\,.\]
\end{proof}
\subsection{Properties of $d_{h,A}$ and $d_{h,A}^*$}
Let us introduce some notations related with the Riemann mapping theorem.
In the following, we gather some standards properties related with $d_{h,A}$ and $d_{h,A}^*$.  We will use the following lemma.

\begin{lemma}\label{eq.b4}
For all $u\in\mathscr{C}^\infty_{0}(U;\mathbb{C})$, we have
\[\begin{split}
\norm{d_{h,A}u}_{L^2(U)}^2&= \norm{(\p-A)u}_{L^2(U)}^2 + h\int_UB|u|^2\dd x\\
\norm{d^\times_{h,A}u}_{L^2(U)}^2&= \norm{(\p-A)u}_{L^2(U)}^2-h\int_UB|u|^2\dd x
\end{split}\,.
\]
These formulas can be extended to $u\in H^1_{0}(U;\mathbb{C})$.
\end{lemma}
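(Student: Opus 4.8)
The plan is to verify the two identities by a direct computation using the pointwise algebraic structure of the magnetic Cauchy--Riemann operators, exploiting that $d_{h,A}$ and $d^\times_{h,A}$ are the off-diagonal entries of $\sigma\cdot(\p-A)$ as recorded in \eqref{defi:zigzag}. Concretely, I would first observe that $d_{h,A}$ and $d^\times_{h,A}$ can be written as first-order operators whose product (in either order) reproduces the diagonal entries of $\DP$. Indeed, from \eqref{eq:defDP} and \eqref{defi:zigzag} we have
\[
\begin{pmatrix}0&d_{h,A}\\ d^\times_{h,A}&0\end{pmatrix}^2=\begin{pmatrix}d_{h,A}d^\times_{h,A}&0\\ 0& d^\times_{h,A}d_{h,A}\end{pmatrix}=\begin{pmatrix}|\p-A|^2-hB&0\\ 0&|\p-A|^2+hB\end{pmatrix}\,,
\]
so that, as differential operators, $d_{h,A}d^\times_{h,A}=|\p-A|^2-hB$ and $d^\times_{h,A}d_{h,A}=|\p-A|^2+hB$.

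Next I would compute the adjoint relations on $\mathscr{C}^\infty_0(U;\CC)$. For $u\in\mathscr{C}^\infty_0(U;\CC)$, integration by parts (no boundary terms appear) gives $\langle d^\times_{h,A}u,d^\times_{h,A}u\rangle_{L^2(U)}=\langle u, d_{h,A}d^\times_{h,A}u\rangle_{L^2(U)}$, because $d_{h,A}$ is the formal adjoint of $d^\times_{h,A}$; this last fact is a direct check from the explicit formulas $d_{h,A}=-2ih\partial_z-A_1+iA_2$ and $d^\times_{h,A}=-2ih\partial_{\overline z}-A_1-iA_2$, using $\partial_z^*=-\partial_{\overline z}$ on compactly supported functions and the fact that $-A_1+iA_2$ is the complex conjugate of $-A_1-iA_2$. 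Combining this with the operator identity from the first paragraph,
\[
\|d^\times_{h,A}u\|^2_{L^2(U)}=\langle u,(|\p-A|^2-hB)u\rangle_{L^2(U)}=\|(\p-A)u\|^2_{L^2(U)}-h\int_U B|u|^2\,\dd x\,,
\]
where the last step is the usual quadratic-form identity for $|\p-A|^2=(\p-A)\cdot(\p-A)$ obtained by integrating by parts once (again with no boundary contribution since $u\in\mathscr{C}^\infty_0$). Symmetrically, $\|d_{h,A}u\|^2_{L^2(U)}=\langle u,(|\p-A|^2+hB)u\rangle_{L^2(U)}=\|(\p-A)u\|^2_{L^2(U)}+h\int_U B|u|^2\,\dd x$, which is the first displayed identity.

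Finally, I would extend both identities from $\mathscr{C}^\infty_0(U;\CC)$ to $H^1_0(U;\CC)$ by density: all three terms $\|d_{h,A}u\|^2_{L^2(U)}$, $\|d^\times_{h,A}u\|^2_{L^2(U)}$, $\|(\p-A)u\|^2_{L^2(U)}$ and $\int_U B|u|^2\,\dd x$ are continuous with respect to the $H^1$-norm (using that $A\in L^\infty(\overline U)$ and $B\in L^\infty(\overline U)$, since $B\in\mathscr{C}^\infty(\overline\Omega)$ and $A=\nabla\phi^\perp$ is smooth up to the boundary), and $\mathscr{C}^\infty_0(U;\CC)$ is dense in $H^1_0(U;\CC)$ by definition. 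I do not expect any genuine obstacle here: the only mild points of care are checking that $d_{h,A}$ is indeed the formal adjoint of $d^\times_{h,A}$ in the sign conventions of the paper, and confirming that the relevant coefficients are bounded so that the density argument closes; both are routine.
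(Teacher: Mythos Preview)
Your proposal is correct and follows essentially the same approach as the paper: the paper's proof invokes precisely the operator identities $d_{h,A}d^\times_{h,A}=|\p-A|^2-hB$ and $d^\times_{h,A}d_{h,A}=|\p-A|^2+hB$, integration by parts, and density, which is exactly what you have written out in detail.
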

\begin{proof}
It follows from integrations by parts and the fact that $d_{h,A}d^\times_{h,A}=|\p-A|^2-hB$ and $d^\times_{h,A}d_{h,A}=|\p-A|^2+hB$. The extension to $u\in H^1_{0}(U;\mathbb{C})$ follows by density.
\end{proof}
\begin{remark}\label{rem.boundmagn}
	From Lemma \ref{eq.b4}, we deduce\footnote{It may also be found in \cite[Lemma 1.4.1]{FH11}.}, that for all $u\in H^1_0(U;\CC)$,
	\[\|(\p-A)u\|_{L^2(U)}^2\geq \int_{U} hB|u|^2\dd x\,.\]
\end{remark}

\begin{proposition}\label{prop:zigzagtot}
	Assume that $U$ is of class $\mathscr{C}^2$. 
	\begin{enumerate}[\rm (a)]
		\item\label{eq.b1} The operator $(d_{h,A},\mathsf{Dom}(d_{h,A}))$ is closed with closed range.
		\item\label{eq.b2} The adjoint $(d_{h,A}^*,\mathsf{Dom}(d_{h,A}^*))$ acts as $d_{h,A}^\times$ on
		\[\begin{split}
			\mathsf{Dom}(d^*_{h,A})
			&=\{u\in L^2(U) : \partial_{\overline z} u\in L^2(U)\} = \ker(d^*_{h,A}) + H^1(U;\CC)\,
		\end{split}\]
		and
		\[
			\ker(d^*_{h,A}) = \{e^{-\phi/h}v\,, v\in L^2(U), \partial_{\overline{z}}v = 0\}\,.
		\]		
	
		\item\label{eq.b3}  We have $\ker(d_{h,A}^*)^\perp\cap \mathsf{Dom}(d_{h,A}^*)= \{d_{h,A}w\,, w\in H^1_0(U;\CC)\cap H^2(U;\CC)\}$.
	\end{enumerate}
\end{proposition}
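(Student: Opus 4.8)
The strategy is to transfer everything to the non-magnetic case treated in Lemma~\ref{lem.zigzag_nm} via the gauge factorization of Proposition~\ref{prop:zeromode}, and then to use Lemma~\ref{eq.b4} to control the magnetic term. First, for part~\eqref{eq.b1}: for $u\in H^1_0(U;\CC)$ we have by Lemma~\ref{eq.b4} together with Remark~\ref{rem.boundmagn} that $\norm{d_{h,A}u}_{L^2(U)}^2=\norm{(\p-A)u}_{L^2(U)}^2+h\int_U B|u|^2\,\dd x\geq 2h\int_U B|u|^2\,\dd x\geq 2hB_0\norm{u}_{L^2(U)}^2$ (using $B>0$ on the relevant set). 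Combined with the diamagnetic-type lower bound $\norm{(\p-A)u}_{L^2(U)}\geq c\norm{u}_{H^1_0(U)}$ (which follows from expanding $|\p-A|^2$, the $L^2$ control just obtained, and the Poincaré inequality applied to $e^{\phi/h}u$, or directly from the gauge transform), one gets a coercive lower bound $\norm{d_{h,A}u}_{L^2(U)}\geq c_h\norm{u}_{H^1_0(U)}$. This immediately gives that $d_{h,A}$ is closed on $H^1_0(U;\CC)$ and has closed range.

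For part~\eqref{eq.b2}: the inclusion $\mathsf{Dom}(d_{h,A}^*)\subset\{u\in L^2(U):\partial_{\overline z}u\in L^2(U)\}$ follows from the definition of the adjoint, since $d_{h,A}^\times=-2ih\partial_{\overline z}-A_1-iA_2$ differs from $-2ih\partial_{\overline z}$ by an $L^\infty$ multiplication. Conversely, if $\partial_{\overline z}u\in L^2(U)$, one tests against $w\in\mathscr{C}^\infty_0(U;\CC)$, integrates by parts, and extends by density to $H^1_0(U;\CC)$, exactly as in Lemma~\ref{lem.zigzag_nm}(b), to conclude $u\in\mathsf{Dom}(d_{h,A}^*)$ with $d_{h,A}^*u=d_{h,A}^\times u$. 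The decomposition $\{u:\partial_{\overline z}u\in L^2\}=\ker(d_{h,A}^*)+H^1(U;\CC)$: given such $u$, write $d_{h,A}^\times u=:f\in L^2$; since $d_{h,A}^\times$ acting on $H^1$ is surjective onto $L^2$ (this is where I would invoke the solvability of $\bar\partial$ after the gauge transform, or equivalently solve $\partial_{\overline z}(e^{\phi/h}v)=\tfrac{i}{2h}e^{\phi/h}f$ by a Cauchy-transform argument on the bounded set $U$), pick $u_1\in H^1(U;\CC)$ with $d_{h,A}^\times u_1=f$; then $u-u_1\in\ker(d_{h,A}^*)$. For the kernel: $d_{h,A}^\times(e^{-\phi/h}v)=e^{-\phi/h}(-2ih\partial_{\overline z}v)$ by the Leibniz rule and $\partial_{\overline z}\phi=\tfrac12(A_1+iA_2)$ (which is built into the gauge choice of Definition~\ref{defi.gauge}), so $e^{-\phi/h}v\in\ker(d_{h,A}^*)$ iff $\partial_{\overline z}v=0$; since $e^{-\phi/h}$ is bounded above and below on $\overline U$, this is an $L^2$-isomorphism onto the kernel.

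For part~\eqref{eq.b3}: by closedness of the range (part~\eqref{eq.b1}), $\ker(d_{h,A}^*)^\perp=\overline{\mathrm{ran}(d_{h,A})}=\mathrm{ran}(d_{h,A})$, so $\ker(d_{h,A}^*)^\perp\cap\mathsf{Dom}(d_{h,A}^*)=\{d_{h,A}w:w\in H^1_0(U;\CC),\ d_{h,A}^\times(d_{h,A}w)\in L^2(U)\}$. Now $d_{h,A}^\times d_{h,A}=|\p-A|^2+hB$, so the condition $d_{h,A}^\times(d_{h,A}w)\in L^2$ amounts to $|\p-A|^2w\in L^2$, i.e.\ (after the gauge transform $w=e^{-\phi/h}\tilde w$, which turns $|\p-A|^2$ into a conjugated Laplacian with smooth coefficients) $\Delta\tilde w\in L^2$; elliptic regularity for the Dirichlet Laplacian then gives $\tilde w\in H^2(U;\CC)$, hence $w\in H^2(U;\CC)\cap H^1_0(U;\CC)$, and conversely. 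This yields exactly $\ker(d_{h,A}^*)^\perp\cap\mathsf{Dom}(d_{h,A}^*)=\{d_{h,A}w:w\in H^1_0(U;\CC)\cap H^2(U;\CC)\}$.

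The main obstacle is the surjectivity statement hidden in part~\eqref{eq.b2}, namely that $d_{h,A}^\times$ maps $H^1(U;\CC)$ onto $L^2(U)$ (equivalently, solvability of the inhomogeneous $\bar\partial$-equation with an $H^1$ solution on the bounded open set $U$). I expect this to be the technical heart; it can be handled either by the Cauchy transform $\tfrac{1}{\pi}\int_U \tfrac{g(\zeta)}{z-\zeta}\,\dd\zeta$ (which gains one derivative in $L^2$ on bounded domains by standard singular-integral estimates) applied after conjugating out $e^{-\phi/h}$, or by quoting the corresponding fact for $d_{1,0}$ from the circle of results around Lemma~\ref{lem.zigzag_nm} and transferring by the gauge. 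Everything else is a routine transcription of Lemma~\ref{lem.zigzag_nm} through the bounded, bounded-below weight $e^{\pm\phi/h}$, using Lemma~\ref{eq.b4} to dispose of the zeroth-order magnetic term.
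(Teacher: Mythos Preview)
Your proposal is essentially correct and follows the same skeleton as the paper: reduce to Lemma~\ref{lem.zigzag_nm} via the gauge of Proposition~\ref{prop:zeromode}, and use Lemma~\ref{eq.b4} for the coercivity in~(a). Two points of over-complication are worth flagging.

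First, what you call the ``main obstacle'' --- the surjectivity $d_{h,A}^\times:H^1(U;\CC)\to L^2(U)$ needed for the decomposition in~(b) --- is not treated as an independent step in the paper, and need not be. Once~(c) is established, the decomposition $\mathsf{Dom}(d_{h,A}^*)=\ker(d_{h,A}^*)\oplus\big(\ker(d_{h,A}^*)^\perp\cap\mathsf{Dom}(d_{h,A}^*)\big)$ (valid because the range is closed) together with~(c) gives $\ker(d_{h,A}^*)^\perp\cap\mathsf{Dom}(d_{h,A}^*)\subset H^1(U;\CC)$, hence $\mathsf{Dom}(d_{h,A}^*)\subset\ker(d_{h,A}^*)+H^1(U;\CC)$; the reverse inclusion is trivial. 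So no Cauchy-transform argument is needed.

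Second, in~(c) your gauge step ``$w=e^{-\phi/h}\tilde w$ turns $|\p-A|^2$ into a conjugated Laplacian'' is not quite right (this real exponential is not a standard gauge and does not remove $A$ from the magnetic Laplacian). The paper avoids this entirely: since $|\p-A|^2+hB=-h^2\Delta-2ihA\cdot\nabla-ih(\nabla\cdot A)+|A|^2+hB$ and $A\in\mathscr{C}^\infty(\overline U)$, all terms except $-h^2\Delta$ map $H^1$ to $L^2$, so for $w\in H^1_0(U;\CC)$ the condition $(|\p-A|^2+hB)w\in L^2$ is equivalent to $\Delta w\in L^2$, and elliptic regularity for the Dirichlet Laplacian finishes. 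Your route can be repaired, but the direct expansion is both shorter and what the paper actually does.
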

\begin{notation}\label{not.5}
	 The notation $d_{h,A,U}$ for $d_{h,A}$ emphasizes the dependence on $U$.
	We denote by $\Pi_{h,A,U}$ (or simply $\Pi_{h,A}$ if there is no ambiguity) the orthogonal projection on $\ker(d^*_{h,A})$.
\end{notation}
\begin{proof}
\begin{enumerate}[\rm (a)]
\item 

By Lemma \ref{eq.b4}, the graph norm of $d_{h,A}$ and the usual $H^1$-norm are equivalent. Thus, the graph of $d_{h,A}$ is a closed subspace of $L^2(U)\times L^2(U)$.
From Lemma \ref{eq.b4} and Remark \ref{rem.boundmagn}, we get, for all $u\in H^1_{0}(U)$,
\[\|d_{h,A}u\|^2_{L^2(U)}\geq h\int_{U}2B|u|^2\dd x\,.\]
With Assumption \eqref{eq.a1} of Theorem \ref{theo.main} and the fact that the operator is closed, the range is closed.
\item 
We have
\[
	\mathsf{Dom}(d^*_{h,A}) = \mathsf{Dom}(d^*_{1,0})\,,
\]
and $d^*_{h,A}$ acts as $d^\times_{h,A}$.
 By Proposition \ref{prop:zeromode} and Lemma \ref{lem.zigzag_nm}, we deduce
		\[
			\ker(d^*_{h,A})  = \{e^{-\phi/h}v\,, v\in L^2(U), \partial_{\overline{z}}v = 0\}\,.
		\]
\item As in the proof of Lemma \ref{lem.zigzag_nm}, we get 
\[\begin{split}
			&\ker(d_{h,A}^*)^\perp\cap \mathsf{Dom}(d_{h,A}^*) 
			= {\rm ran}(d_{h,A})\cap \mathsf{Dom}(d_{h,A}^*) 
			\\&= \{d_{h,A}w\,, w\in H^1_0(U;\CC) \mbox{ and }  d_{h,A}^*d_{h,A}w = (|p-A|^2+hB) w\in L^2(U;\CC)\}
			\\&= \{d_{h,A}w\,, w\in H^1_0(U;\CC) \mbox{ and }  -\Delta w\in L^2(U;\CC)\}
			\\&= \{d_{h,A}w\,, w\in H^1_0(U;\CC)\cap H^2(U;\CC)\}\,.
\end{split}\,.\]
\end{enumerate}

\end{proof}

\begin{definition}
We define the self-adjoint operators $(\mathscr{L}^\pm_{h},\mathsf{Dom}(\mathscr{L}^\pm_{h}))$ as the operators acting as 
		\begin{equation}\label{eq.dd}
			 \mathscr{L}^-_{h} = d_{h,A} d_{h,A}^\times=|\p-A|^2-hB\,, \quad\mathscr{L}^+_{h} = d_{h,A}^\times d_{h,A}=|\p-A|^2+hB\,,
		\end{equation}
		on the respective domains
		\[\begin{split}
			\mathsf{Dom}(\mathscr{L}^-_{h}) 
			&= \{u\in\mathsf{Dom}(d_{h,A}^*)\,, d_{h,A}^*u\in\mathsf{Dom}(d_{h,A})\}\,,
			\\
			\mathsf{Dom}(\mathscr{L}^+_{h}) 
			&= \{u\in\mathsf{Dom}(d_{h,A})\,, d_{h,A}u\in\mathsf{Dom}(d_{h,A}^*)\} 
			\\&= H^1_0(U;\CC)\cap H^2(U;\CC)\,.
		\end{split}\]
\end{definition}

\subsection{Semiclassical elliptic estimates for the magnetic Cauchy-Riemann operator}
\begin{notation}\label{not.RM}
By the Riemann mapping theorem, and since $\partial\Omega$ is assumed to be $\mathscr{C}^2$, it is Dini-continuous (see \cite[Theorem 2.1, and Section 3.3]{P92}) and we can consider a biholomorphic function $F$ between  $D(0,1)$ and $\Omega$ such that $F(\partial D(0,1))=\partial\Omega$. We write $x=F(y)$. We notice that
\[\partial_{y_{1}}+i\partial_{y_{2}}=\overline{F'(y)}(\partial_{x_{1}}+i\partial_{x_{2}})\,,\]
and
\[\dd x=|F'(y)|^2\dd y\,.\]
By \cite[Theorem 3.5]{P92}, this biholomorphism can be continuously extended to $\overline{D(0,1)}$, and there exist $c_{1}, c_{2}>0$ such that, for all $y\in\overline{D(0,1)}$,
\[c_{1}\leq |F'(y)|\leq c_{2}\,.\]
For $\delta\in(0,1)$, we also let
\[
\Omega_{\delta}=
F(D(0,1-\delta))
\,.\]
Note that $\Omega_\delta$ is actually an analytic manifold.
\end{notation}

The following theorem is a crucial ingredient in the proof of the lower bound of $\lambda_{k}(h)$. It is intimately related to the spectral supersymmetry of Dirac operators \cite[Theorem 5.5 and Corollary 5.6]{T92}.
\begin{theorem}\label{prop.elliptic}
There exist $\delta_{0}, h_0>0$ and $c>0$ such that, for all $\delta\in[0,\delta_{0})$, for all $h\in(0,h_0)$, and for all $u\in \mathsf{Dom}(d_{h,A, \Omega_{\delta}}^*)\cap \ker(d_{h,A,\Omega_{\delta}}^*)^\perp$
\[\| d_{h,A,\Omega_{\delta}}^* u\|_{L^2(\Omega_{\delta})}\geq \sqrt{2hB_0}\|u\|_{L^2(\Omega_\delta)}\,,\]
\[
\| d_{h,A,\Omega_{\delta}}^* u\|_{L^2(\Omega_{\delta})}\geq c
h^2\left(\norm{\nabla u}_{L^2(\Omega_\delta)}+\norm{u}_{L^2(\partial \Omega_\delta)}\right)\,,
\]
where we used Notation \ref{not.RM}.
\end{theorem}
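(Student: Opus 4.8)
The plan is to prove the two inequalities in order, deriving the first (the $\sqrt{2hB_0}$-bound) directly from the algebraic identities of Lemma \ref{eq.b4}, and then bootstrapping the second (the $h^2$-bound with $H^1$ and boundary norms) from the first together with the non-magnetic elliptic estimate of Lemma \ref{lem.zigzag_nm}(c) and a conformal change of variables via the Riemann map $F$ of Notation \ref{not.RM}. Throughout I would work on a fixed domain $\Omega_\delta = F(D(0,1-\delta))$, keeping track of the fact that, by Notation \ref{not.RM}, $|F'|$ is bounded above and below uniformly in $\delta\in[0,\delta_0)$, so that all constants arising from the change of variables are uniform in $\delta$.

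First, for the bound $\|d_{h,A,\Omega_\delta}^*u\|_{L^2}\geq\sqrt{2hB_0}\|u\|_{L^2}$: write $u = d_{h,A}w$ with $w\in H^1_0(\Omega_\delta;\CC)\cap H^2(\Omega_\delta;\CC)$ using Proposition \ref{prop:zigzagtot}\eqref{eq.b3}. Then $d_{h,A}^*u = d_{h,A}^\times d_{h,A}w = \mathscr{L}^+_h w = (|\p-A|^2+hB)w$, and a single integration by parts gives $\|d_{h,A}^*u\|_{L^2}\|u\|_{L^2}\geq \langle u, d_{h,A}^*u\rangle = \|d_{h,A}w\|_{L^2}^2$; applying Lemma \ref{eq.b4} (the first identity) plus Remark \ref{rem.boundmagn} to $w$ yields $\|d_{h,A}w\|_{L^2}^2 = \|(\p-A)w\|_{L^2}^2 + h\int B|w|^2 \geq 2h\int B|w|^2 \geq 2hB_0\|w\|_{L^2}^2$. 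On the other hand, writing $u=d_{h,A}w$ and using the same identity, $\|u\|_{L^2}^2 = \|d_{h,A}w\|_{L^2}^2 \geq 2hB_0\|w\|_{L^2}^2$ again, and combining these gives $\|d_{h,A}^*u\|_{L^2}\|u\|_{L^2}\geq 2hB_0\|w\|_{L^2}^2 \geq \|u\|_{L^2}^2\cdot\tfrac{2hB_0\|w\|^2_{L^2}}{\|u\|_{L^2}^2}$; more directly, $\|d_{h,A}^*u\|_{L^2}\|u\|_{L^2}\geq \|d_{h,A}w\|^2_{L^2}=\|u\|^2_{L^2}$ combined with $\|u\|^2_{L^2}\geq 2hB_0\|w\|^2_{L^2}$ needs a little care — the cleanest route is: $\langle u,d_{h,A}^*u\rangle = \|d_{h,A}w\|^2_{L^2}$ and $\langle u, d_{h,A}^*u\rangle = \langle u, \mathscr{L}^+_h w\rangle$; since $\mathscr{L}^+_h\geq 2hB_0$ on $H^1_0$ (from \eqref{eq:defDP} and \eqref{eq.a1}), one gets $\|u\|_{L^2}\|d^*_{h,A}u\|_{L^2}\geq 2hB_0\|w\|^2_{L^2}$, while $\|u\|^2_{L^2}=\|d_{h,A}w\|^2_{L^2}\le \|w\|_{L^2}\|d^*_{h,A}u\|_{L^2}$; dividing yields $\|d^*_{h,A}u\|_{L^2}\ge 2hB_0\|w\|_{L^2}^2/\|u\|_{L^2}\cdot(\|u\|_{L^2}/(\|w\|_{L^2}\cdot\|u\|_{L^2}/\|d^*u\|))$... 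I would instead argue via the spectral theorem applied to the nonnegative self-adjoint operator $\mathscr{L}^-_h=d_{h,A}d^\times_{h,A}$ restricted to $\ker(d^*_{h,A})^\perp$, whose nonzero spectrum coincides with that of $\mathscr{L}^+_h\geq 2hB_0$ (supersymmetry, cf. \cite[Corollary 5.6]{T92}), giving $\|d^*_{h,A}u\|^2_{L^2}=\langle u,\mathscr{L}^-_hu\rangle\geq 2hB_0\|u\|^2_{L^2}$ at once.

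For the second inequality, transplant $u$ to the disk: set $v(y)=u(F(y))$, which by Notation \ref{not.RM} gives $\partial_{\overline z}$ transforming with the factor $\overline{F'}$ and $\dd x = |F'|^2\,\dd y$. Using Proposition \ref{prop:zeromode} to conjugate away the magnetic field, $d^*_{h,A}u=e^{-\phi/h}(-2ih\partial_{\overline z})(e^{\phi/h}u)$, and the conformal invariance relates $\|d^*_{h,A,\Omega_\delta}u\|_{L^2(\Omega_\delta)}$ to a non-magnetic Cauchy-Riemann quantity on $D(0,1-\delta)$ up to factors $|F'|^{\pm1}$; I then invoke Lemma \ref{lem.zigzag_nm}(c) on the disk $D(0,1-\delta)$ — whose elliptic constant is uniform in $\delta\in[0,\delta_0)$ since these are nested disks — to bound $\|\nabla v\|_{L^2}$ by $\|d^*_{1,0}v\|_{L^2}$, and a trace inequality on $D(0,1-\delta)$ (again with uniform constant) to bound $\|v\|_{L^2(\partial)}$. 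Pulling back, the homogeneity in $h$ is tracked by noting each $\partial_{\overline z}$ carries one power of $h$: the non-magnetic estimate is $h$-independent after factoring $h$ out of $d^*_{h,0}$, but the magnetic conjugation introduces $e^{\pm\phi/h}$ whose gradient is $\mathscr{O}(h^{-1})$, and it is precisely controlling these exponential weights that costs the extra powers of $h$, landing at $h^2$. The main obstacle I anticipate is exactly this last point: one cannot naively apply elliptic regularity to $e^{\phi/h}u$ because the weight is not bounded uniformly in $h$; the remedy is to split $u$ using the decomposition $\mathsf{Dom}(d^*_{h,A})=\ker(d^*_{h,A})+H^1$ and, on the orthogonal complement of the kernel, use that $u=d_{h,A}w$ so that $u$ itself already has an $H^1$ bound from the closed-range property, then feed the first inequality (the $L^2$ gain of order $\sqrt h$) back in to absorb the weight derivatives — iterating the gain twice produces the $h^2$. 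Getting the $\delta$-uniformity of all constants through the Riemann map, using the two-sided bound on $|F'|$ and the fact that $\Omega_\delta$ are images of nested disks with uniformly $\mathscr{C}^2$ boundaries, is the technical heart of the argument.
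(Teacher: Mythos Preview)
For the first inequality your final route via supersymmetry is correct and is essentially the paper's argument phrased abstractly. Concretely, the paper writes $u=d_{h,A}w$ with $w\in H^1_0\cap H^2$ (Proposition~\ref{prop:zigzagtot}\eqref{eq.b3}), so that $d^*_{h,A}u=\mathscr{L}^+_hw$; since $\mathscr{L}^+_h\geq 2hB_0$ one has $\|\mathscr{L}^+_hw\|\geq 2hB_0\|w\|$, and then $2hB_0\|u\|^2=2hB_0\langle w,\mathscr{L}^+_hw\rangle\leq 2hB_0\|w\|\|\mathscr{L}^+_hw\|\leq\|d^*_{h,A}u\|^2$.

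For the second inequality there is a genuine gap. Conjugating by $e^{\phi/h}$ and invoking Lemma~\ref{lem.zigzag_nm}(c) does not work, for exactly the reason you anticipate: $u\perp\ker(d^*_{h,A})$ in the flat $L^2$ product means that $v=e^{\phi/h}u$ is orthogonal to holomorphic functions only with respect to the \emph{weighted} product $\langle\cdot,e^{-2\phi/h}\cdot\rangle$, so $v$ need not lie in $\ker(d^*_{1,0})^\perp$ and the non-magnetic estimate is unavailable. Your proposed remedy (``iterate the $\sqrt h$ gain twice'') does not close this: the first inequality controls only $\|u\|_{L^2}$, and no iteration of an $L^2$ bound yields control of $\|\nabla u\|_{L^2}$.

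The paper never conjugates the magnetic field away for this step. Keeping $u=d_{h,A}w$ and $d^*_{h,A}u=\mathscr{L}^+_hw$, it first extracts from the quadratic form $\langle w,\mathscr{L}^+_hw\rangle=\|(\p-A)w\|^2+h\int B|w|^2$ the bounds $h\|w\|_{L^2}\leq C\|d^*_{h,A}u\|$ and $h^{1/2}\|(\p-A)w\|_{L^2}\leq C\|d^*_{h,A}u\|$, whence (since $A\in L^\infty$) $h^2\|\nabla w\|_{L^2}\leq C\|d^*_{h,A}u\|$. The step you are missing is then to read $\mathscr{L}^+_hw=d^*_{h,A}u$ as an elliptic PDE,
\[
-h^2\Delta w = d^*_{h,A}u - 2ihA\cdot\nabla w - |A|^2w + hBw\,,
\]
and apply standard $H^2$ regularity for the \emph{Dirichlet Laplacian} on $\Omega_\delta$ (uniformly in $\delta$, via the Riemann map and a rescaling of $D(0,1-\delta)$ onto $D(0,1)$), yielding $h^3\|w\|_{H^2}\leq C\|d^*_{h,A}u\|$. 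Since $\nabla u=\nabla d_{h,A}w$ involves $h$ times second derivatives of $w$ plus lower-order terms, this gives $h^2\|\nabla u\|_{L^2}\leq C\|d^*_{h,A}u\|$; the boundary term then follows from a uniform trace inequality combined with the first inequality. The factor $h^2$ is thus not two iterations of $\sqrt h$ but the $h^3$ from the $H^2$ bound on $w$ divided by one $h$ coming from $u=d_{h,A}w\sim h\partial w$.
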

Theorem \ref{prop.elliptic} follows from the following two lemmas.
\begin{lemma}\label{prop.orth.proj}
	For all $u\in\mathsf{Dom}( d_{h,A,\Omega_{\delta}}^*)\cap \ker( d_{h,A,\Omega_{\delta}}^*)^\perp$, we have
	\[\| d_{h,A,\Omega_{\delta}}^* u\|_{L^2(U)}\geq \sqrt{2hB_0}\|u\|_{L^2(U)}\,.\]
\end{lemma}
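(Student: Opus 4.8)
The plan is to exploit the supersymmetric structure relating $d_{h,A}$ and $d_{h,A}^\times = d_{h,A}^*$. By Proposition \ref{prop:zigzagtot}\eqref{eq.b3}, any $u\in\mathsf{Dom}(d_{h,A,\Omega_\delta}^*)\cap\ker(d_{h,A,\Omega_\delta}^*)^\perp$ can be written as $u = d_{h,A}w$ with $w\in H^1_0(\Omega_\delta;\CC)\cap H^2(\Omega_\delta;\CC)$. First I would compute $d_{h,A}^*u = d_{h,A}^\times d_{h,A}w = \mathscr{L}^+_{h}w = (|\p-A|^2+hB)w$, using \eqref{eq.dd}. Hence $\|d_{h,A,\Omega_\delta}^*u\|_{L^2(\Omega_\delta)}^2 = \langle w, \mathscr{L}^+_h w\rangle_{L^2(\Omega_\delta)} \cdot$ --- more precisely I would instead pair $d_{h,A}^*u$ against $w$ and against $u$ as follows.

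The key computation: since $u = d_{h,A}w$, we have
\[
\|u\|_{L^2(\Omega_\delta)}^2 = \langle d_{h,A}w, d_{h,A}w\rangle_{L^2(\Omega_\delta)} = \langle w, d_{h,A}^\times d_{h,A} w\rangle_{L^2(\Omega_\delta)} = \langle w, \mathscr{L}^+_h w\rangle_{L^2(\Omega_\delta)}\,,
\]
the integration by parts being justified because $w\in H^1_0\cap H^2$. By Lemma \ref{eq.b4} applied on $U = \Omega_\delta$ (valid for $H^1_0$ functions) together with Remark \ref{rem.boundmagn}, and Assumption \eqref{eq.a1} of Theorem \ref{theo.main} giving $B\geq B_0>0$ on $\Omega_\delta\subset\Omega$, we get
\[
\langle w, \mathscr{L}^+_h w\rangle_{L^2(\Omega_\delta)} = \|(\p-A)w\|_{L^2(\Omega_\delta)}^2 + h\int_{\Omega_\delta}B|w|^2\,\dd x \geq 2h\int_{\Omega_\delta}B|w|^2\,\dd x \geq 2hB_0\|w\|_{L^2(\Omega_\delta)}^2\,.
\]
On the other hand, by Cauchy–Schwarz,
\[
\|u\|_{L^2(\Omega_\delta)}^2 = \langle w, \mathscr{L}^+_h w\rangle_{L^2(\Omega_\delta)} = \langle w, d_{h,A}^*u\rangle_{L^2(\Omega_\delta)} \leq \|w\|_{L^2(\Omega_\delta)}\,\|d_{h,A}^*u\|_{L^2(\Omega_\delta)}\,.
\]
Combining the two displays: $\|u\|^2 \leq \|w\|\,\|d_{h,A}^*u\| \leq (2hB_0)^{-1/2}\|u\|\,\|d_{h,A}^*u\|$, and dividing by $\|u\|$ (the case $u=0$ being trivial) yields $\sqrt{2hB_0}\,\|u\|_{L^2(\Omega_\delta)}\leq\|d_{h,A}^*u\|_{L^2(\Omega_\delta)}$, which is the claim.

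I expect no serious obstacle here: the only points requiring care are that $u$ genuinely lies in $\mathrm{ran}(d_{h,A})$ with a preimage $w$ in $H^1_0\cap H^2$ (this is exactly Proposition \ref{prop:zigzagtot}\eqref{eq.b3}, whose proof already invokes elliptic regularity of the Laplacian), and that Lemma \ref{eq.b4} and Remark \ref{rem.boundmagn} apply on the subdomain $\Omega_\delta$ rather than $\Omega$ --- but $\Omega_\delta$ is itself a nice bounded domain (an analytic manifold by Notation \ref{not.RM}) and the magnetic field still satisfies $B\geq B_0$ there, so all the ingredients transfer verbatim. The estimate is in fact uniform in $\delta$, which is what is needed for Theorem \ref{prop.elliptic}.
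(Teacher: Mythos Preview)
Your proof is correct and follows essentially the same supersymmetric strategy as the paper: write $u=d_{h,A}w$ with $w\in H^1_0\cap H^2$ via Proposition~\ref{prop:zigzagtot}\eqref{eq.b3}, then combine the quadratic-form lower bound for $\mathscr{L}^+_h$ with Cauchy--Schwarz. The only cosmetic difference is that the paper passes through the spectral bound $\|\mathscr{L}^+_h w\|\geq 2hB_0\|w\|$ (deduced from the same form inequality), whereas you use the form bound $\langle w,\mathscr{L}^+_h w\rangle\geq 2hB_0\|w\|^2$ directly to control $\|w\|$ in terms of $\|u\|$; both routes are equivalent.
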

\begin{proof}
	
	For notational simplicity, we let $U=\Omega_{\delta}$ and we write $d_{h,A}$ for $d_{h,A,U}$.
	Let $u\in\mathsf{Dom}(d^*_{h,A})\cap \ker(d_{h,A}^*)^\perp$. By Proposition \ref{prop:zigzagtot}, there exists $w\in H^1_0(U;\CC)\cap H^2(U;\CC)$ such that $u = d_{h,A}w$ and $d_{h,A}^* u = \mathscr{L}^+_{h}w$.
	The spectrum of $\mathscr{L}^+_{h}$ is a subset of $[2hB_0,+\infty)$ (see Remark \ref{rem.boundmagn}).
	Thus, we get
	\[\| \mathscr{L}^+_{h}w\|_{L^2(U)}\geq 2hB_{0}\|w\|_{L^2(U)}\,.\]
	By integration by parts and the Cauchy-Schwarz inequality, we have
	\[\begin{split}
	2hB_0\norm{d_{h,A}w}_{L^2(U)}^2
	&
	\leq 2hB_0\braket{w, \mathscr{L}^+_{h}w}_{L^2(U)}
	\leq 2hB_0\norm{w}_{L^2(U)}\norm{\mathscr{L}^+_{h}w}_{L^2(U)}
	\\&
	\leq\norm{\mathscr{L}^+_{h}w}_{L^2(U)}^2\,.
	\end{split}\]
	This ensures that 
	\[
	\sqrt{2hB_0}\norm{d_{h,A}w}_{L^2(U)}\leq\norm{\mathscr{L}^+_{h}w}_{L^2(U)} = \norm{{d^*_{h,A}} \left(d_{h,A}w\right)}_{L^2(U)}
	\]
	and the conclusion follows.
\end{proof}
\begin{lemma}\label{prop.orth.proj1}
	There exist $\delta_{0},h_0>0$ and $c>0$ such that, for all $\delta\in[0,\delta_{0})$, for all $h\in(0,h_0)$, and for all $u\in \mathsf{Dom}(d_{h,A, \Omega_{\delta}}^*)\cap \ker(d_{h,A,\Omega_{\delta}}^*)^\perp$, we have
	\[
	\|{d^*_{h,A,\Omega_{\delta}}}u\|_{L^2(\Omega_{\delta})} \geq 
	ch^2\norm{\nabla u}_{L^2(\Omega_\delta)}+ch^{2}\norm{u}_{L^2(\partial \Omega_\delta)}\,.
	\]
\end{lemma}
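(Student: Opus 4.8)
The plan is to reduce the estimate to the model case of the unit disk $D(0,1)$ via the Riemann map $F$ of Notation \ref{not.RM}, where the kernel of $d_{1,0}^*$ is the classical Bergman-type space of holomorphic functions and the elliptic gain can be read off from explicit Fourier computations. First I would use the change of gauge (Proposition \ref{prop:zeromode}) to write $u = e^{-\phi/h}\tilde u$ with $\tilde u \in \mathsf{Dom}(d_{1,0,\Omega_\delta}^*)$; since $\phi$ and its derivatives are bounded on $\overline\Omega$ uniformly in $\delta \in [0,\delta_0)$, the weight $e^{-\phi/h}$ is harmless at the level of $L^2$ and $H^1$ norms up to multiplicative constants depending on $h$ only through fixed powers — more precisely $\|e^{\pm\phi/h}\|_{L^\infty} \le e^{C/h}$, but in fact since we only need the inequality with a factor $h^2$ on the right, what matters is that $d_{h,A}^* = e^{-\phi/h}\, d_{1,0}^*\, e^{\phi/h}$ intertwines the orthogonality structure, mapping $\ker(d_{h,A}^*)^\perp$ to a space close to $\ker(d_{1,0}^*)^\perp$. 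Then, pulling back by $F$, using $\partial_{y_1}+i\partial_{y_2} = \overline{F'(y)}(\partial_{x_1}+i\partial_{x_2})$ and $\dd x = |F'(y)|^2 \dd y$ together with the two-sided bound $c_1 \le |F'| \le c_2$, the problem on $\Omega_\delta = F(D(0,1-\delta))$ transforms into the analogous problem on the disk $D(0,1-\delta)$ with comparable constants, uniformly in $\delta$.

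On the disk $D(0,r)$ with $r = 1-\delta$, I would argue as in Lemma \ref{lem.zigzag_nm}(c): any $v \in \ker(d_{1,0,D(0,r)}^*)^\perp \cap \mathsf{Dom}(d_{1,0,D(0,r)}^*)$ can be written $v = d_{1,0} w$ with $w \in H^1_0 \cap H^2$ and $d_{1,0}^* v = -\Delta w$, and by scaling the elliptic estimate $\|w\|_{H^2(D(0,r))} \le C\|\Delta w\|_{L^2(D(0,r))}$ holds with a constant $C$ uniform for $r$ in a compact subinterval of $(0,1]$. This gives $\|\nabla v\|_{L^2} \le \|w\|_{H^2} \le C\|d_{1,0}^* v\|_{L^2}$, and the trace theorem on $D(0,r)$ (again with uniform constant in $r$) gives $\|v\|_{L^2(\partial D(0,r))} \le C\|v\|_{H^1(D(0,r))} \le C'\|d_{1,0}^* v\|_{L^2}$. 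Transporting this back through $F$ and the gauge factor produces the two powers of $h$: one power comes from $\p = -ih\nabla$ hidden in $d_{h,A}$ versus $d_{1,0}$, and the other from the derivative falling on the exponential weight $e^{\pm\phi/h}$ when commuting it past $\nabla$ (which produces a term of size $h^{-1}$ that must be absorbed). Keeping careful track shows the clean statement $\|d_{h,A}^* u\| \ge c h^2(\|\nabla u\|_{L^2(\Omega_\delta)} + \|u\|_{L^2(\partial\Omega_\delta)})$.

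The main obstacle I anticipate is the uniformity in $\delta$: the Riemann map $F$ is fixed, but one must ensure that the constants in the elliptic regularity estimate for the Dirichlet Laplacian on $\Omega_\delta = F(D(0,1-\delta))$, and in the trace inequality on $\partial\Omega_\delta$, do not blow up as $\delta \to 0$. Since $\Omega_\delta$ is the biholomorphic image of a disk of radius $1-\delta$ and $|F'|$ is bounded above and below, the domains $\Omega_\delta$ converge smoothly (indeed analytically, as noted in Notation \ref{not.RM}) to $\Omega_0 = \Omega$, so the relevant constants are bounded uniformly for $\delta \in [0,\delta_0)$ with $\delta_0$ small — but spelling this out cleanly, e.g. by working in the fixed coordinates $y$ on the unit disk and treating $\delta$ merely as a shrinking of the radius, is the technical heart of the argument. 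A secondary point requiring care is that the decomposition $\mathsf{Dom}(d_{h,A}^*) = \ker(d_{h,A}^*) + H^1$ must be used to guarantee that $u$ lies in $H^1(\Omega_\delta)$ in the first place (so that the trace makes sense), which follows from Proposition \ref{prop:zigzagtot}(c) exactly as in the non-magnetic case.
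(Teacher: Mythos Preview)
Your plan has a genuine gap in the gauge-transform step. Writing $u=e^{-\phi/h}\tilde u$ and using $d_{h,A}^{*}=e^{-\phi/h}d_{h,0}^{*}e^{\phi/h}$ does \emph{not} reduce matters to the non-magnetic estimate with only polynomial losses in $h$. First, the orthogonality condition $u\in\ker(d_{h,A}^{*})^{\perp}$ becomes $\tilde u\perp\{\text{holomorphic}\}$ in the \emph{weighted} space $L^{2}(e^{-2\phi/h}\,\dd x)$, which is not $\ker(d_{1,0}^{*})^{\perp}$; your phrase ``close to'' hides an exponential discrepancy. Second, and more fatally, passing between $u$ and $\tilde u$ in $L^{2}$, $H^{1}$, or trace norms costs factors of $\|e^{\pm\phi/h}\|_{L^{\infty}}=e^{|\phi_{\min}|/h}$, not powers of $h$: for instance $\nabla u=e^{-\phi/h}\nabla\tilde u-h^{-1}e^{-\phi/h}(\nabla\phi)\tilde u$, and neither term can be bounded by $\|\nabla\tilde u\|_{L^{2}}$ without the exponential prefactor. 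So even granting the non-magnetic elliptic estimate on $\tilde u-\Pi_{0}\tilde u$, you cannot transport it back to $u$ with only an $h^{2}$ loss.

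The paper's argument avoids the gauge transform entirely. One writes $u=d_{h,A}w$ with $w\in H^{1}_{0}\cap H^{2}$ (Proposition~\ref{prop:zigzagtot}\,(c)), so that $d_{h,A}^{*}u=\mathscr{L}_{h}^{+}w=(|\p-A|^{2}+hB)w$. The quadratic-form identity $\|d_{h,A}w\|^{2}=\|(\p-A)w\|^{2}+h\!\int B|w|^{2}=\langle d_{h,A}^{*}u,w\rangle$ combined with $B\ge B_{0}>0$ gives $h\|w\|+h^{1/2}\|(\p-A)w\|\le C\|d_{h,A}^{*}u\|$, hence $h^{2}\|\nabla w\|\le C\|d_{h,A}^{*}u\|$ since $A$ is bounded. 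Then the key observation is that $\mathscr{L}_{h}^{+}=-h^{2}\Delta+(\text{terms with bounded coefficients})$, so standard elliptic regularity for the Dirichlet Laplacian yields $h^{3}\|w\|_{H^{2}}\le C\|d_{h,A}^{*}u\|$; from $u=d_{h,A}w$ one reads off $h^{2}\|\nabla u\|\le C\|d_{h,A}^{*}u\|$, and the trace estimate follows. Your instinct about the uniformity in $\delta$ via the Riemann map is correct and is exactly how the paper handles the dependence of the elliptic and trace constants on $\Omega_{\delta}$, but that step comes \emph{after} the direct magnetic argument, not as a substitute for it.
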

\begin{proof}
For notational simplicity, we let $U=\Omega_{\delta}$ and we write $d_{h,A}$ for $d_{h,A,U}$.

With the same notations as in the proof of Lemma \ref{prop.orth.proj}  ($u = d_{h,A}w$),  we have
\[d^*_{h,A}u=d^*_{h,A}d_{h,A}w=\mathscr{L}^+_{h}w\,,\qquad w\in H^1_0(U)\cap H^2(U)\,. \]
	\begin{enumerate}[\rm i.]
		\item From Lemma \ref{eq.b4},
		\[\begin{split}
		\|d_{h,A}w\|^2_{L^2(U)}
		&= \norm{(\p-A)w}_{L^2(U)}^2 + h\int_U B|w|^2\dd x
		\\&= \braket{d^*_{h,A}u,w}_{L^2(U)} \leq \norm{d^*_{h,A}u}_{L^2(U)}\norm{w}_{L^2(U)}\,.
		\end{split}\]
		Using Assumption \eqref{eq.a1}, we get
		\[
			B_0h\|w\|^2_{L^2(U)}\leq h\int_U B|w|^2\dd x\leq  \norm{d^*_{h,A}u}_{L^2(U)}\norm{w}_{L^2(U)}\,,
		\]
		and
		\begin{equation}\label{eq.777}
			h\|w\|_{L^2(U)}\leq B_0^{-1}\norm{d^*_{h,A}u}_{L^2(U)}\,.
		\end{equation}
		Since
		\[
			h\int_U B|w|^2\dd x\leq\norm{(\p-A)w}_{L^2(U)}^2\,,
		\]
		we deduce that
		\[
			B_0^{1/2}h^{1/2}\norm{w}_{L^2(U)}\leq\norm{(\p-A)w}_{L^2(U)}\,,
		\]
		and
		\begin{equation}\label{eq.778}
			 \norm{(\p-A)w}_{L^2(U)}^2\leq \norm{d^*_{h,A}u}_{L^2(U)}\norm{w}_{L^2(U)}\leq  \norm{d^*_{h,A}u}_{L^2(U)}B_0^{-1/2}h^{-1/2}\norm{(\p-A)w}_{L^2(U)}\,,
		\end{equation}
		so that using \eqref{eq.777} and \eqref{eq.778},
		there exists $C>0$ such that
		\begin{equation*}
		h^{\frac{1}{2}}\norm{(\p-A)w}_{L^2(U)} + h\norm{w}_{L^2(U)}\leq  C\norm{d^*_{h,A}u}_{L^2(U)}\,.
		\end{equation*}
		Since $A$ is bounded,
		\begin{equation*}
		h^{3/2}\norm{\nabla w}_{L^2(U)}\leq C\norm{d^*_{h,A}u}_{L^2(U)} + Ch^{1/2}\norm{w}_{L^2(U)}\leq Ch^{-1/2}\norm{d^*_{h,A}u}_{L^2(U)}\,.
		\end{equation*}
		Thus,
		\begin{equation}\label{esti.normh1CR}
		h^{2}\norm{\nabla w}_{L^2(U)} + h\norm{w}_{L^2(U)}\leq  C\norm{d^*_{h,A}u}_{L^2(U)}\,.
		\end{equation}
		\item Let us now deal with the derivatives of order two. From the explicit expression of $\mathscr{L}^+_h w$, we get
		\[\begin{split}
		-h^2\Delta w = d^*_{h,A}u -2ihA\cdot\nabla w-|A|^2w+hBw
		\end{split}\,.\]
		Taking the $L^2$-norm and using \eqref{esti.normh1CR}, we get
		\[\begin{split}
		h^2\norm{\Delta w}_{L^2(U)}
		&\leq \norm{d^*_{h,A}u}_{L^2(U)}+\norm{-2ihA\cdot\nabla w}_{L^2(U)}+\norm{|A|^2w}_{L^2(U)}+\norm{hBw}_{L^2(U)}
		\\&\leq C(1+h^{-1})\norm{d^*_{h,A}u}_{L^2(U)}\,.
		\end{split}\]
		Using a standard ellipticity result for the Dirichlet Laplacian, we find
		\begin{equation}\label{esti.normh2CR}
		h^{3}\norm{w}_{H^2(U)}+h^{2}\norm{\nabla w}_{L^2(U)} + h\norm{w}_{L^2(U)}\leq  C\norm{d^*_{h,A}u}_{L^2(U)}\,.
		\end{equation}
		The uniformity of the constant with respect to $\delta\in(0,\delta_0)$ can be checked in the classical proof of elliptic regularity.
		Alternatively, using Riemann mapping theorem, we send $\Omega$ on the unit disk. Then, we perform a change of scale for each $\delta$ to  send $D(0,1-\delta)$ onto $D(0,1)$ and use a standard ellipticity result on $D(0,1)$. Here, $\delta$ appears as a regular parameter in the coefficients of the elliptic operator.
		Note that $d_{h,A}=L_{1}-i L_{2}$ where $L_{j}=-ih\partial_{j}-A_{j}$.
		Using \eqref{esti.normh2CR}, we deduce that
		\begin{equation*}
		\begin{split}
		\norm{\nabla d_{h,A} w}_{L^2(U)}
		&\leq Ch\norm{w}_{H^2(U)} + C\norm{w}_{L^2(U)} + C\norm{\nabla w}_{L^2(U)}
		\leq Ch^{-2}\norm{d^*_{h,A}u}_{L^2(U)}\,,
		\end{split}
		\end{equation*}
		and, since $u = d_{h,A}w$,
		\begin{equation}\label{eq.nabla}
		h^2\norm{\nabla u}_{L^2(U)}\leq C\norm{d^*_{h,A}u}_{L^2(U)}\,.
		\end{equation}
		\item
		A classical trace result combined with \eqref{eq.nabla} and Lemma \ref{prop.orth.proj} gives 
		\[
	 \norm{u}_{L^2(\partial U)}\leq C\norm{u}_{H^1(U)}\leq Ch^{-2}\|{d^*_{h,A}}u\|_{L^2(U)} \,,
		\]
		where it can again be checked using the same techniques that $C$ does not depend on $\delta\in(0,\delta_0)$.
	\end{enumerate}
\end{proof}

\section{Lower bounds}\label{sec.4}
The aim of this section is to establish the following proposition.
\begin{proposition}\label{prop.lowbd}
Assume that $\Omega$ is $\mathscr{C}^2$ and satisfies {\rm Assumption \ref{asum:setom}}.
There exists a constant $\theta_0\in(0,1]$ such that for all $k\in\mathbb{N}^*$, 
\[\liminf_{h\to 0} e^{-2\phi_{\min}/h}h^{k-1}\lambda_{k}(h)\geq C_{\rm sup}(k)\theta_0=C_{\rm inf}(k)\,.\]
If $\Omega = D(0,1)$ and $B$ is radial, we have
\[\liminf_{h\to 0} e^{-2\phi_{\min}/h}h^{k-1}\lambda_{k}(h)\geq \frac{4\Phi}{(k-1)!}\det(\mathsf{Hess}_{x_{\min}}\phi)^{\frac{k}{2}}\,.\]
\end{proposition}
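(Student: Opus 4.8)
The starting point is Lemma \ref{lem.lambdaholo}: the upper bound of Proposition \ref{prop:upper-bound} forces $\lambda_k(h)<2B_0h$ for $h$ small, so
\[
\lambda_k(h)=h^2\inf_{\substack{V\subset H^1_0(\Omega;\CC)\\ \dim V=k}}\ \sup_{v\in V\setminus\{0\}}\ \frac{\num(v,\phi)}{\den(v,\phi)}\,,\qquad \num(v,\phi)=4\!\int_\Omega|\partial_{\overline z}v|^2e^{-2\phi/h}\dd x,\ \ \den(v,\phi)=\!\int_\Omega|v|^2e^{-2\phi/h}\dd x.
\]
Hence it suffices to produce a fixed $\theta_0\in(0,1]$ such that, for \emph{every} $k$-dimensional $V$, one has $\sup_{v\in V\setminus\{0\}}\num(v,\phi)/\den(v,\phi)\geq C_{\inf}(k)\,h^{-k-1}e^{2\phi_{\min}/h}(1-o(1))$ with an $o(1)$ independent of $V$; multiplying by $h^2$ then gives the claim. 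Throughout I would fix a scale $\delta=\delta(h)$ (for instance $\delta=h|\log h|^2$) with $\delta^2/h\to0$ and $\delta/h\to\infty$, and work with the analytic domain $\Omega_\delta=F(D(0,1-\delta))$ from Notation \ref{not.RM}, so that $x_{\min}\in\Omega_\delta$ for $h$ small.

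The first ingredient is to make Observation \ref{obs1} quantitative. Given $v\in H^1_0(\Omega;\CC)$, set $u=e^{-\phi/h}v$, let $\Pi_{h,A,\Omega_\delta}u$ be its orthogonal projection onto $\ker(d^*_{h,A,\Omega_\delta})=e^{-\phi/h}\mathscr{H}(\Omega_\delta)$ (Proposition \ref{prop:zigzagtot}), and put $\omega_v=e^{\phi/h}\Pi_{h,A,\Omega_\delta}u\in\mathscr{H}(\Omega_\delta)$, $r_v=u-\Pi_{h,A,\Omega_\delta}u$. Since the projection lies in the kernel and $d^*_{h,A,\Omega_\delta}u=-2ihe^{-\phi/h}\partial_{\overline z}v$, one has $\|d^*_{h,A,\Omega_\delta}r_v\|_{L^2(\Omega_\delta)}\leq h\sqrt{\num(v,\phi)}$, so Theorem \ref{prop.elliptic} (with its constants uniform in $\delta\in[0,\delta_0)$) bounds $\|r_v\|_{L^2(\Omega_\delta)}$, $\|r_v\|_{H^1(\Omega_\delta)}$ and $\|r_v\|_{L^2(\partial\Omega_\delta)}$ by $h^{-1}\sqrt{\num(v,\phi)}$ up to a constant. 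Multiplying by $e^{\phi/h}$ — which is $\leq1$ and, on $\partial\Omega_\delta$, of size $e^{-c\delta/h}$ by a Taylor expansion of $\phi$ near $\partial\Omega$ — shows that $v-\omega_v$ is small in these norms (this is essentially Proposition \ref{prop.approxH1norm}). The key point is that one may assume the nontrivial case $\sup_{v\in V}\num/\den<C_{\inf}(k)h^{-k-1}e^{2\phi_{\min}/h}$, i.e.\ $\num(v,\phi)\lesssim h^{-k-1}e^{2\phi_{\min}/h}\den(v,\phi)$; then the factor $e^{-c\delta/h}$ beats the negative powers of $h$ and all these corrections become lower order than the main terms computed below.

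Next comes the choice of test vector. Consider the linear rescaled jet map $\tilde J_h\colon V\to\mathcal P_{k-1}$, $v\mapsto\sum_{j=0}^{k-1}\tfrac{\omega_v^{(j)}(z_{\min})}{j!}h^{j/2}Z^j$, so that $\omega_v(z_{\min}+h^{1/2}Z)=(\tilde J_h v)(Z)+\mathscr O(h^{k/2})$. If $\tilde J_h$ is not injective, any $v$ in its kernel has $\omega_v$ vanishing to order $\geq k$ at $z_{\min}$; by Lemma \ref{lem.normL2} then $\den(v,\phi)=\mathscr O(h^{k+1}e^{-2\phi_{\min}/h})$, while the boundary estimate below together with Cauchy's formula give $\num(v,\phi)\gtrsim h^{-1-2k}e^{2\phi_{\min}/h}$, so $\num/\den\gtrsim h^{-2k-2}e^{2\phi_{\min}/h}$, which already exceeds the target. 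Otherwise $\tilde J_h$ is an isomorphism and I would take $v_\star=\tilde J_h^{-1}(P_{k-1})$, where $P_{k-1}$ is the monic $\Nb$-orthogonal polynomial of Notation \ref{not.Pn}; thus $\omega_{v_\star}$ vanishes to order exactly $k-1$ at $z_{\min}$ with $\omega_{v_\star}^{(k-1)}(z_{\min})=(k-1)!\,h^{-(k-1)/2}$ and rescaled jet $P_{k-1}$. For the denominator, Lemma \ref{lem.normL2}, the change of scale $y=(x-x_{\min})/h^{1/2}$ and $v_\star\approx\omega_{v_\star}$ near $x_{\min}$ give $\den(v_\star,\phi)=(1+o(1))\,h\,e^{-2\phi_{\min}/h}\,\Nb(P_{k-1})^2=(1+o(1))\,h\,e^{-2\phi_{\min}/h}\,\distb(z^{k-1},\mathcal P_{k-2})^2$ by \eqref{rem:testfuncupbound}. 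For the numerator, I restrict to the collar $\Omega\setminus\Omega_\delta$, transfer to the disk via $F$ (leaving $\int|\partial_{\overline z}v|^2e^{-2\phi/h}$ invariant), pass to tubular coordinates on $D(0,1)\setminus D(0,1-\delta)$ and Taylor-expand $\phi\circ F$ at $\partial D(0,1)$, so the weight becomes $e^{2t\,\pa_\n(\phi\circ F)(s)/h}(1+o(1))$ with $\pa_\n(\phi\circ F)(s)=|F'(e^{is})|\,\pa_\n\phi(F(e^{is}))>0$. Bounding this positive weight below by its minimum, Fourier-decomposing in $s$, solving the one-dimensional problems with Lemma \ref{lem.opt}, and using $v_\star\approx\omega_{v_\star}$ near $\partial\Omega_\delta$ (so only nonnegative Fourier modes contribute), one obtains $\num(v_\star,\phi)\geq(1-o(1))\tfrac{2\theta_0}{h}\Nh(\omega_{v_\star})^2$, where $\theta_0$ is the ratio of the minimum to the maximum of $|F'(e^{is})|\pa_\n\phi(F(e^{is}))$ over $\partial D(0,1)$ (Lemma \ref{lem:lowbound_annular_energy}, Lemma \ref{lem:lowbound_4}); here one also lets $\delta\to0$, the Hardy-type integrals on the circles $\partial D(0,1-\delta)$ converging to those on $\partial D(0,1)$ since $F$ and $\pa_\n\phi$ extend continuously to the closure. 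Finally, Cauchy's formula (as in \eqref{eq.CCS}) identifies, among holomorphic functions vanishing to order $k-1$ at $z_{\min}$ with fixed $(k-1)$-th derivative, the least value of $\Nh$: it is $h^{-(k-1)/2}\disth((z-z_{\min})^{k-1},\mathscr{H}^2_k(\Omega))$ up to $1+o(1)$. Combining the three estimates,
\[
\frac{\num(v_\star,\phi)}{\den(v_\star,\phi)}\ \geq\ (1-o(1))\,\frac{2\theta_0}{h^{k+1}}\,e^{2\phi_{\min}/h}\left(\frac{\disth\bigl((z-z_{\min})^{k-1},\mathscr{H}^2_k(\Omega)\bigr)}{\distb\bigl(z^{k-1},\mathcal P_{k-2}\bigr)}\right)^2=(1-o(1))\,C_{\inf}(k)\,h^{-k-1}e^{2\phi_{\min}/h}\,,
\]
which is the required bound. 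In the radial case one has $x_{\min}=0$, $\pa_\n\phi$ constant hence $\theta_0=1$, $P_n(Z)=Z^n$, and the monomials are orthogonal for both $\Nb$ and $\Nh$; by Remark \ref{rem.sym}, $\disth(z^{k-1},\mathscr{H}^2_k)^2=2\pi\Phi$ and $\distb(z^{k-1},\mathcal P_{k-2})^2=2\pi\,2^{k-1}(k-1)!/B(0)^k$, and with $\mathsf{Hess}_{x_{\min}}\phi=\tfrac{B(0)}{2}\mathrm{Id}$ one checks $C_{\inf}(k)=\tfrac{B(0)^k\Phi}{2^{k-2}(k-1)!}=\tfrac{4\Phi}{(k-1)!}\det(\mathsf{Hess}_{x_{\min}}\phi)^{k/2}$.

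The core difficulty is the interplay of the two scales. The boundary analysis of $\num$ only feels $\omega_{v_\star}$ through its top-order vanishing $\omega_{v_\star}^{(k-1)}(z_{\min})$, producing $\disth$; the concentration analysis of $\den$ feels the whole rescaled jet $P_{k-1}$, producing $\distb=\Nb(P_{k-1})$, which is strictly smaller than $\Nb(z^{k-1})$ in the non-radial case. Choosing the jet of $\omega_{v_\star}$ to be exactly the Bargmann-orthogonal polynomial $P_{k-1}$ is what makes the ratio match $C_{\sup}(k)$ up to the factor $\theta_0$. I expect the two genuinely hard steps to be (i) the uniform-in-$V$ control of the non-holomorphic remainder $v-\omega_v$ via Theorem \ref{prop.elliptic}, which must be fed back through the a priori upper bound on $\num/\den$ and the calibrated choice of $\delta$; and (ii) the annular lower bound of $\num$ with a \emph{non-constant} boundary weight, where $\theta_0$ enters as the loss from freezing the weight at its minimum and where the limit $\delta\to0$ in the Hardy distances on $\Omega_\delta$ must be justified.
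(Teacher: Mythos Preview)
Your overall strategy matches the paper's proof closely: holomorphic approximation via Theorem \ref{prop.elliptic} (packaged as Proposition \ref{prop.approxH1norm}), the annular lower bound with the Szeg\"o projection and Lemma \ref{lem.opt} (this is Lemmas \ref{lem:lowbound_annular_energy} and \ref{lem:lowbound_4}), and a final reduction to polynomials. The paper works directly on the eigenspace $\mathscr{E}_h=\mathrm{span}(v_{1,h},\dots,v_{k,h})$ rather than an arbitrary $k$-dimensional $V$, but your contradiction reduction (assume $\sup_{v\in V}\num/\den$ is below the target) yields the same a priori control. Where you pick $v_\star\in V$ with prescribed rescaled jet $P_{k-1}$ and bound the ratio for that single vector, the paper instead derives an inequality valid for every $w_h=\widetilde\Pi_{h,\delta}v_h$ and then takes the supremum of $|c_{k-1}|/\hat N_h\!\left(\sum c_n(z-z_{\min})^n\right)$ over $c\in\CC^k$; these are dual formulations of the same optimization and both land on $1/\Nb(P_{k-1})$.

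There is, however, a genuine gap in your numerator step. You assert that ``$\omega_{v_\star}$ vanishes to order exactly $k-1$ at $z_{\min}$'' and then bound $\Nh(\omega_{v_\star})$ from below by the Hardy distance for such functions. But the rescaled jet of $\omega_{v_\star}$ is $P_{k-1}(Z)=Z^{k-1}+\sum_{j<k-1}b_{k-1,j}Z^j$, so $\omega_{v_\star}^{(j)}(z_{\min})=j!\,h^{-j/2}b_{k-1,j}$ for $j<k-1$, which is generically nonzero (it vanishes only in the radial case, where $P_{k-1}=Z^{k-1}$). Hence $\omega_{v_\star}$ does \emph{not} vanish to order $k-1$, and the Hardy-distance lower bound you invoke does not apply as stated. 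The paper handles exactly this point in item ii.\ of Section \ref{sec.final} by a triangle inequality: writing $w_h=\mathrm{Tayl}_{k-2}w_h+\tfrac{w_h^{(k-1)}(z_{\min})}{(k-1)!}(z-z_{\min})^{k-1}+(w_h-\mathrm{Tayl}_{k-1}w_h)$ one gets
\[\|w_h\|_{\mathscr{H}^2(\Omega_\delta)}\geq \frac{|w_h^{(k-1)}(z_{\min})|}{(k-1)!}\disthd\!\left((z-z_{\min})^{k-1},\mathscr{H}^2_k(\Omega_\delta)\right)-\|\mathrm{Tayl}_{k-2}w_h\|_{\mathscr{H}^2(\Omega_\delta)}\,,\]
and then $\|\mathrm{Tayl}_{k-2}w_h\|_{\mathscr{H}^2(\Omega_\delta)}\leq Ch^{-(k-1)/2}\hat N_h(\mathrm{Tayl}_{k-1}w_h)$ is fed back through \eqref{eq.ineqTaylor} and absorbed using the \emph{upper} bound on $\lambda_k(h)$ from Proposition \ref{prop:upper-bound}. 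In your language: the low-order coefficients of the jet scale like $h^{-j/2}$ with $j\leq k-2$, contributing at most $\mathscr O(h^{-(k-2)/2})$ to the Hardy norm, which is $o(h^{-(k-1)/2})$ and therefore harmless once this loop is closed. Your closing paragraph correctly flags this two-scale interaction as the crux, so the error looks like a slip in the body rather than a conceptual one; but as written the numerator argument is not valid outside the radial case.
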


\subsection{Inside approximation by the zero-modes}

 Let $k\in\mathbb{N}^*$. Let us consider an orthonormal family $(v_{j,h})_{1\leq j\leq k}$ (for the scalar product of $L^2(e^{-2\phi/h}\dd x)$) associated with the eigenvalues $(\lambda_{j}(h))_{1\leq j\leq k}$. We define 
\[\mathscr{E}_{h}=\underset{1\leq j\leq k}{\mathrm{span}}\, v_{j,h}\,.\]
In this section, we will see that the general upper bound proved in the last section implies that all $v_{h}\in\mathscr{E}_{h}$ wants to be holomorphic inside $\Omega$.
\subsubsection{Concentration of the groundstate}
\begin{lemma}\label{lem.vhL2}
There exist $C, h_{0}>0$ such that for all $v_{h}\in\mathscr{E}_{h}$ and $h\in(0,h_{0})$, we have,
\[\|v_{h}\|^2_{L^2(\Omega)}\leq C h^{-(1+k)} e^{2\phi_{\min}/h}\int_{\Omega}e^{-2\phi/h}|v_{h}|^2\dd x\,.\]
\end{lemma}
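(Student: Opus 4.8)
The statement says that every $v_h$ in the $k$-dimensional spectral space $\mathscr{E}_h$ satisfies a reversed-weight $L^2$ estimate: its flat $L^2(\Omega)$-norm is controlled by $h^{-(1+k)}e^{2\phi_{\min}/h}$ times its weighted $L^2(e^{-2\phi/h}\dd x)$-norm. Since $\phi\le 0$ with $\phi_{\min}<0$ attained at $x_{\min}$, the weight $e^{-2\phi/h}$ is $\ge 1$ everywhere and is exponentially large (of size $e^{-2\phi_{\min}/h}$) near $x_{\min}$; so the content of the inequality is that $v_h$ cannot be too concentrated away from $x_{\min}$ — equivalently, the weighted mass of $v_h$ must see a definite fraction of its flat mass, up to the polynomial loss $h^{-(1+k)}$. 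The natural route is to bound $\|v_h\|_{L^2(\Omega)}^2$ from above using the variational characterization \eqref{eq.lambda1'} of the eigenvalues together with the upper bound of Proposition \ref{prop:upper-bound}.

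\textbf{Main steps.} First, I would use that $v_h\in\mathscr{E}_h$ is a linear combination of the first $k$ normalized eigenfunctions $v_{j,h}$, so (by orthonormality for the weight $e^{-2\phi/h}$) one has $\int_\Omega e^{-2\phi/h}|v_h|^2\dd x=\sum_j|c_j|^2$ if $v_h=\sum_j c_j v_{j,h}$, and the associated Rayleigh quotient controls the weighted Cauchy–Riemann energy: by Lemma \ref{lem.lambdaholo},
\[
4h^2\int_\Omega e^{-2\phi/h}|\partial_{\overline z}v_h|^2\dd x\le \lambda_k(h)\int_\Omega e^{-2\phi/h}|v_h|^2\dd x\le C\,h^{-k+1}e^{2\phi_{\min}/h}\int_\Omega e^{-2\phi/h}|v_h|^2\dd x,
\]
using Proposition \ref{prop:upper-bound} for the last inequality (and $\lambda_k(h)<2B_0h$ for small $h$, so that Lemma \ref{lem.lambdaholo} applies). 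Next, set $u_h=e^{-\phi/h}v_h$, which lies in $H^1_0(\Omega;\CC)$; then $\|u_h\|_{L^2(\Omega)}^2=\int_\Omega e^{-2\phi/h}|v_h|^2\dd x$ and, by Proposition \ref{prop:zeromode}, $\|\sigma\cdot(\p-A)(u_h,0)^{\mathrm T}\|_{L^2(\Omega)}^2=4h^2\int_\Omega e^{-2\phi/h}|\partial_{\overline z}v_h|^2\dd x$, which is $\le C h^{-k+1}e^{2\phi_{\min}/h}\|u_h\|_{L^2(\Omega)}^2$ by the previous display. By the diamagnetic-type bound of Remark \ref{rem.boundmagn} (or directly from \eqref{def:quadform}), $\|(\p-A)u_h\|_{L^2(\Omega)}^2\le \|\sigma\cdot(\p-A)(u_h,0)^{\mathrm T}\|_{L^2(\Omega)}^2+h\|B\|_{L^\infty}\|u_h\|_{L^2(\Omega)}^2$, so $\|(\p-A)u_h\|_{L^2(\Omega)}^2\le C h^{-k+1}e^{2\phi_{\min}/h}\|u_h\|_{L^2(\Omega)}^2$ for small $h$ (the extra $h\|B\|_\infty$ term is negligible compared with $h^{-k+1}e^{2\phi_{\min}/h}$). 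Now express $\|v_h\|_{L^2(\Omega)}^2=\|e^{\phi/h}u_h\|_{L^2(\Omega)}^2$ and estimate this in terms of $\|u_h\|_{L^2(\Omega)}$ and $\|(\p-A)u_h\|_{L^2(\Omega)}$: the point is that $e^{\phi/h}\le 1$ is bounded, so trivially $\|v_h\|_{L^2(\Omega)}^2\le e^{-2\phi_{\min}/h}\int_\Omega e^{-2\phi/h}|v_h|^2\dd x$ — but that only gives the exponent $h^{0}$, not $h^{-(1+k)}$, which is \emph{worse} than claimed. Wait: actually the claimed bound has the prefactor $h^{-(1+k)}e^{2\phi_{\min}/h}$, which for small $h$ is \emph{smaller} than $e^{-2\phi_{\min}/h}$, so the trivial bound is not enough and one genuinely needs to exploit the smallness of the energy.

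\textbf{The real argument and the main obstacle.} To get the extra power $h^{-(1+k)}$ one must localize: split $\Omega=D(x_{\min},r)\cup(\Omega\setminus D(x_{\min},r))$ for a suitable $r$ (likely $r\sim h^{1/2}$ or a fixed small constant), and argue that on $\Omega\setminus D(x_{\min},r)$ the weight $e^{-2\phi/h}$ already exceeds a quantity of order $h^{-(1+k)}e^{-2\phi_{\min}/h}$ except on a thin set, while the $L^2$ mass of $v_h$ on the exceptional region near $x_{\min}$ is controlled because $u_h$ there is essentially holomorphic (small Cauchy–Riemann energy) and holomorphic functions satisfy a pointwise/mean-value bound. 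Concretely, since $\|\partial_{\overline z}v_h\|$ is exponentially small in the weighted norm, a Caccioppoli/mean-value estimate for near-holomorphic functions gives $|v_h(x)|^2\lesssim h^{-2}\int_{D(x,ch^{1/2})}|v_h|^2\dd y$ plus an error from $\partial_{\overline z}v_h$, and integrating the weighted inequality near $x_{\min}$ converts the pointwise value into the weighted norm with the Laplace-method gain of $h$ (cf. Observation \ref{obs2} and \eqref{eq:7}), while the factor $h^{-k}$ comes from the degree-$(k-1)$ structure of test/eigen functions in $\mathscr{E}_h$. I expect the main obstacle to be making this localization quantitative with a constant independent of the choice of $v_h\in\mathscr{E}_h$ and of $h$: one must carefully track how the "near-holomorphicity" of $u_h$ (available only in weighted norm on all of $\Omega$) yields a usable \emph{interior} estimate on $v_h$ near $x_{\min}$, and combine it with the coarea/weight lower bound off $x_{\min}$ so that the two contributions balance at the exponent $h^{-(1+k)}$. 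This is exactly the kind of two-scale bookkeeping flagged in the strategy section, and I would expect the proof to invoke the elliptic estimates of Theorem \ref{prop.elliptic} (or the $H^1$-bound just derived) to control $v_h$ uniformly, then conclude by the min-max dimension count for $\mathscr{E}_h$.
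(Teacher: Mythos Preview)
Your first display is exactly right: orthonormality in $L^2(e^{-2\phi/h}\dd x)$ plus Proposition \ref{prop:upper-bound} give
\[
4h^2\int_\Omega e^{-2\phi/h}|\partial_{\overline z}v_h|^2\,\dd x\le \lambda_k(h)\int_\Omega e^{-2\phi/h}|v_h|^2\,\dd x\le C\,h^{-k+1}e^{2\phi_{\min}/h}\int_\Omega e^{-2\phi/h}|v_h|^2\,\dd x.
\]
But from here you miss the two-line finish that the paper uses, and instead head into an elaborate and unnecessary localization programme. The point is simply this: since $\phi\le 0$ on $\Omega$, one has $e^{-2\phi/h}\ge 1$, so you may drop the weight \emph{on the left-hand side} of the energy estimate. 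Dividing by $h^2$ gives
\[
\int_\Omega |2\partial_{\overline z}v_h|^2\,\dd x\le C\,h^{-(1+k)}e^{2\phi_{\min}/h}\int_\Omega e^{-2\phi/h}|v_h|^2\,\dd x.
\]
Now $v_h\in H^1_0(\Omega)$, and integrating by parts yields $\int_\Omega|2\partial_{\overline z}v_h|^2=\int_\Omega|\nabla v_h|^2$ (this is the $B=0$, $h=1$ case of Lemma \ref{eq.b4}). The Poincar\'e inequality then gives $\|v_h\|_{L^2(\Omega)}^2\le C\|\nabla v_h\|_{L^2(\Omega)}^2$, which is exactly the claim.

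Two further comments. First, in your ``Main steps'' you assert that the term $h\|B\|_{L^\infty}\|u_h\|^2$ is negligible compared with $h^{-k+1}e^{2\phi_{\min}/h}\|u_h\|^2$; this is false, since $e^{2\phi_{\min}/h}$ is exponentially small while $h$ is only polynomially small, so the inequality you write for $\|(\p-A)u_h\|^2$ does not follow. Second, the entire detour through $u_h=e^{-\phi/h}v_h$ and the Caccioppoli/mean-value localization you sketch is not needed here: Lemma \ref{lem.vhL2} is a preliminary input to the concentration Lemma \ref{lem.normL2}, not a consequence of it, and its proof requires nothing beyond the upper bound, the trivial weight inequality $e^{-2\phi/h}\ge 1$, the Dirichlet identity $\|2\partial_{\overline z}v\|_{L^2}=\|\nabla v\|_{L^2}$, and Poincar\'e.
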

This result will be used in the proof of Lemma \ref{lem.normL2} to compute the weighted $L^2$ norm of $v_h$ on $\Omega$  in term of its weighted $L^2$ norm on a shrinking neighborhood of $x_{\rm min}$.

\begin{proof}
 We have $\lambda_{k}(h)=h^{-k+1}\mathscr{O}(e^{2\phi_{\min}/h})$  (see Proposition~\ref{prop:upper-bound}). By using the orthogonality of the $v_{j,h}$, one gets
\begin{equation}\label{eq.l1bound}
\int_{\Omega}e^{-2\phi/h}|2h\partial_{\overline{z}}v_{h}|^2\dd x\leq \lambda_{k}(h)\int_{\Omega}e^{-2\phi/h}|v_{h}|^2\dd x\leq Ch^{-k+1}e^{2\phi_{\min}/h}\int_{\Omega}e^{-2\phi/h}|v_{h}|^2\dd x\,.
\end{equation}
Now, we use $\phi\leq 0$ to get
\[\int_{\Omega}|2\partial_{\overline{z}}v_{h}|^2\dd x\leq C h^{-(1+k)} e^{2\phi_{\min}/h}\int_{\Omega}e^{-2\phi/h}|v_{h}|^2\dd x\,.\]
Since $v_{h}$ satisfies the Dirichlet boundary condition and by integration by parts, we find
\[\int_{\Omega}|\nabla v_{h}|^2\dd x\leq C h^{-(1+k)} e^{2\phi_{\min}/h}\int_{\Omega}e^{-2\phi/h}|v_{h}|^2\dd x\,.\]
It remains to use the Poincar\'e inequality.
\end{proof}
We can now prove a concentration lemma.

\begin{lemma}\label{lem.normL2} 
Let $\alpha\in(0,1/2)$.
We have 
\[\lim_{h\to 0}\,\sup_{v_h\in\mathscr{E}_{h}\setminus\{0\}}\left|\frac{\int_{D(x_{\rm min}, \ h^{\alpha})}e^{-2\phi/h}|v_{h}(x)|^2\dd x}{\int_{\Omega}e^{-2\phi/h}|v_{h}(x)|^2\dd x}-1\right|=0\,,\]
and
\[
\lim_{h\to 0}\,\sup_{\delta\in(0,\delta_0]}\,\sup_{v_h\in\mathscr{E}_{h}\setminus\{0\}}\left|\frac{\int_{\Omega_{\delta}}e^{-2\phi/h}|v_{h}(x)|^2\dd x}{\int_{\Omega}e^{-2\phi/h}|v_{h}(x)|^2\dd x}-1\right|=0\,,
\]
where $\delta_0$ is defined in Proposition \ref{prop:zigzagtot}.
\end{lemma}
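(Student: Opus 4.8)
The statement is a concentration result: the weighted $L^2$-mass of any $v_h\in\mathscr{E}_h$ concentrates, as $h\to 0$, in a shrinking ball $D(x_{\min},h^\alpha)$ around the minimum of $\phi$, and a fortiori in $\Omega_\delta$ uniformly in $\delta\in(0,\delta_0]$. The plan is to show that the complementary mass is negligible by splitting $\Omega$ into the region $\Omega\setminus D(x_{\min},h^\alpha)$ (where $\phi$ is bounded away from $\phi_{\min}$ by a quantitatively controlled amount, thanks to the non-degenerate minimum in Assumption \ref{eq.a3}) and the region $D(x_{\min},h^\alpha)\setminus\Omega_\delta$ (which is empty for $h$ small since $h^\alpha<\mathsf{d}(x_{\min},\partial\Omega)$ and $\Omega_\delta\supset\Omega_{\delta_0}\ni D(x_{\min},h^\alpha)$ eventually). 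The key quantitative input on the $L^\infty$-size of $v_h$ will be Lemma \ref{lem.vhL2}, which bounds $\|v_h\|_{L^2(\Omega)}$ — and hence, via elliptic/Sobolev control since $v_h$ solves a nice PDE, also $\|v_h\|_{L^\infty}$ — by $C h^{-(1+k)}e^{2\phi_{\min}/h}\int_\Omega e^{-2\phi/h}|v_h|^2\,\dd x$.

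First I would fix $v_h\in\mathscr{E}_h\setminus\{0\}$ and normalize $\int_\Omega e^{-2\phi/h}|v_h|^2\,\dd x=1$, so that Lemma \ref{lem.vhL2} reads $\|v_h\|_{L^2(\Omega)}^2\le C h^{-(1+k)}e^{2\phi_{\min}/h}$. Using the Poincaré-type control from the proof of Lemma \ref{lem.vhL2} we also have $\|\nabla v_h\|_{L^2(\Omega)}^2\le C h^{-(1+k)}e^{2\phi_{\min}/h}$, and then a standard Sobolev embedding in dimension two (more precisely, iterating elliptic regularity: $v_h$ has $-h^2\Delta$-bounds coming from $\partial_{\overline z}v_h$ being small in $L^2$, or simply using that $v_h\in H^2(\Omega)$ with controlled norm via $\mathscr{L}^-_h$) gives a polynomial-in-$h^{-1}$ pointwise bound of the form $\|v_h\|_{L^\infty(\Omega)}\le C h^{-M}e^{\phi_{\min}/h}$ for some fixed exponent $M=M(k)$. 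The exact power is irrelevant: it only needs to be polynomial.

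Next I would estimate the outer contribution. On $\Omega\setminus D(x_{\min},h^\alpha)$, the Taylor expansion of $\phi$ at $x_{\min}$ combined with Assumptions \ref{eq.a2}--\ref{eq.a3} gives, exactly as in \eqref{eq.tayext},
\[
\inf_{\Omega\setminus D(x_{\min},h^\alpha)}\phi \ \geq\ \phi_{\min}+\tfrac{\lambda_{\min}}{2}h^{2\alpha}(1+\mathscr{O}(h^\alpha))\,,
\]
where $\lambda_{\min}>0$ is the smallest eigenvalue of $\mathsf{Hess}_{x_{\min}}\phi$; here one uses that $x_{\min}$ is the \emph{unique} minimizer, so $\phi-\phi_{\min}$ is bounded below by a positive constant on the part of $\Omega\setminus D(x_{\min},h^\alpha)$ that stays away from $x_{\min}$, and by the quadratic lower bound near $x_{\min}$. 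Therefore
\[
\int_{\Omega\setminus D(x_{\min},h^\alpha)}e^{-2\phi/h}|v_h|^2\,\dd x
\ \leq\ \|v_h\|_{L^\infty(\Omega)}^2\,|\Omega|\, e^{-2\phi_{\min}/h}\,e^{-\lambda_{\min}h^{2\alpha-1}(1+\mathscr{O}(h^\alpha))}\,,
\]
and inserting the polynomial bound $\|v_h\|_{L^\infty}^2\le C h^{-2M}e^{2\phi_{\min}/h}$ we get a bound $C h^{-2M} e^{-\lambda_{\min}h^{2\alpha-1}(1+o(1))}=o(1)$, because $2\alpha-1<0$ makes the exponential beat any power of $h^{-1}$. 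Since this bound is uniform in $v_h$ after normalization, the first displayed limit follows. For the second limit, note that for $h$ small enough $D(x_{\min},h^\alpha)\subset\Omega_{\delta_0}\subset\Omega_\delta$ for every $\delta\in(0,\delta_0]$, so $\Omega\setminus\Omega_\delta\subset\Omega\setminus D(x_{\min},h^\alpha)$, and the same estimate gives the uniform-in-$\delta$ conclusion.

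The main obstacle — and the only place requiring care — is the passage from the $L^2$ bound of Lemma \ref{lem.vhL2} to a pointwise ($L^\infty$) bound with an explicit, uniform, at-worst-polynomial power of $h^{-1}$, carried uniformly over the finite-dimensional space $\mathscr{E}_h$; this is where one must invoke two-dimensional Sobolev embedding together with the $H^2$-elliptic estimate for $\mathscr{L}^-_h$ (or iterate the $\partial_{\overline z}$-regularity), being slightly attentive to how the constants depend on $h$. Once a crude polynomial pointwise bound is in hand, the exponential gain $e^{-\lambda_{\min}h^{2\alpha-1}}$ from the non-degenerate minimum swallows everything and the proof is routine. I would also remark that the uniformity over $v_h\in\mathscr{E}_h$ is automatic since every estimate used is linear/quadratic in $v_h$ and was normalized by the weighted $L^2$-norm.
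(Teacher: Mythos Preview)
Your argument is correct, but you have introduced an unnecessary detour through an $L^\infty$ bound. The paper's proof is strictly simpler: on $\Omega\setminus D(x_{\min},h^\alpha)$ one bounds
\[
\int_{\Omega\setminus D(x_{\min},h^\alpha)}e^{-2\phi/h}|v_h|^2\,\dd x
\ \leq\ \Bigl(\sup_{\Omega\setminus D(x_{\min},h^\alpha)}e^{-2\phi/h}\Bigr)\,\|v_h\|_{L^2(\Omega)}^2
\ \leq\ e^{-2\phi_{\min}/h-\lambda_{\min}h^{2\alpha-1}(1+\mathscr{O}(h^\alpha))}\,\|v_h\|_{L^2(\Omega)}^2\,,
\]
and then invokes Lemma~\ref{lem.vhL2} directly to replace $\|v_h\|_{L^2(\Omega)}^2$ by $Ch^{-(1+k)}e^{2\phi_{\min}/h}\int_\Omega e^{-2\phi/h}|v_h|^2\,\dd x$. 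The exponentials cancel and one is left with $Ch^{-(1+k)}e^{-\lambda_{\min}h^{2\alpha-1}(1+o(1))}=\mathscr{O}(h^\infty)$, exactly as you conclude, but without ever needing a pointwise bound on $v_h$.

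In other words, the ``main obstacle'' you identify---the passage from $L^2$ to $L^\infty$ via Sobolev embedding and $H^2$ elliptic estimates for $\mathscr{L}^-_h$---is not an obstacle at all, because the $L^2$ bound of Lemma~\ref{lem.vhL2} is already sufficient. Your route would work (the $H^2$ control is indeed available with polynomial-in-$h^{-1}$ constants, and in dimension two $H^2\hookrightarrow L^\infty$), but it buys nothing and requires extra care, especially regarding the uniformity over $v_h\in\mathscr{E}_h$, which in the paper's argument is automatic since every step is linear in $|v_h|^2$. The reduction of the second limit to the first via $D(x_{\min},h^\alpha)\subset\Omega_{\delta_0}\subset\Omega_\delta$ for small $h$ is the same in both approaches.
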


\begin{proof}
Let us remark that the second limit is a consequence of the first one.
We have
\[
\frac{\int_{D(x_{\rm min}, h^{\alpha})}e^{-2\phi/h}|v_{h}(x)|^2\dd x}{\int_{\Omega}e^{-2\phi/h}|v_{h}(x)|^2\dd x}=1-\frac{\int_{\Omega\setminus D(x_{\rm min},\ h^{\alpha})}e^{-2\phi/h}|v_{h}(x)|^2\dd x}{\int_{\Omega}e^{-2\phi/h}|v_{h}(x)|^2\dd x}\,.
\]
By \eqref{eq.tayext} and Lemma \ref{lem.vhL2}, we deduce that
\[\begin{split}
	&\int_{\Omega\setminus D(x_{\rm min},\ h^{\alpha})}e^{-2\phi/h}|v_{h}(x)|^2\dd x
	\leq 
	e^{-2\phi_{\rm min}/h-\lambda_{\rm min}h^{2\alpha-1}(1+\mathscr{O}(h^\alpha))}\int_{\Omega}|v_{h}(x)|^2\dd x
	\\&\leq
	C h^{-(1+k)}e^{-\lambda_{\rm min}h^{2\alpha-1}(1+\mathscr{O}(h^\alpha))} \int_{\Omega}e^{-2\phi/h}|v_{h}|^2\dd x = \mathscr{O}(h^\infty)\int_{\Omega}e^{-2\phi/h}|v_{h}|^2\dd x\,,
\end{split}\]
and the conclusion follows.
\end{proof}

\subsubsection{Interior approximation}
Now that we know that $v_{h}$ is localized inside $\Omega$, let us explain why it is close to be a holomorphic function.

\begin{notation}
Let us denote by $\widetilde\Pi_{h,\delta}$ the orthogonal projection on the kernel of $-i\partial_{\overline{z}}$ (i.e. the Segal–Bargmann functions on $\Omega_{\delta}$ which is defined in Theorem \ref{prop.elliptic}) for the $L^2$-scalar product $\langle\cdot,e^{-2\phi/h}\cdot \rangle_{L^2(\Omega_{\delta})}$.
\end{notation}
We notice that, if $u=e^{-\phi/h}v$, we have
\[\Pi_{h,A, \Omega_{\delta}}u=e^{-\phi/h}\widetilde{\Pi}_{h,\delta}v\,,\]
where $\Pi_{h,A, \Omega_{\delta}}$ was defined in Notation \ref{not.5}.
\begin{proposition}\label{prop.approxH1norm}
There exist $C, h_{0}>0$ such that for all $\delta\in(0,\delta_{0}]$ and $h\in(0,h_{0})$, we have for all $v_{h}\in\mathscr{E}_{h}$,
\begin{enumerate}[\rm (a)]
	\item \label{prop.ptlowBound1}$\|e^{-\phi/h} (\mathrm{Id}-\tilde\Pi_{h,\delta})v_{h}\|_{L^2(\Omega_{\delta})}\leq Ch^{-\frac{1}{2}}\sqrt{\lambda_{k}(h)}\|e^{-\phi/h}v_{h}\|_{L^2(\Omega_{\delta})}$,
	\item \label{prop.ptlowBound2}$\|e^{-\phi/h}(\mathrm{Id}-\widetilde\Pi_{h,\delta})v_h\|_{L^2(\partial\Omega_{\delta})}\leq C h^{-2}\sqrt{\lambda_{k}(h)} \|e^{-\phi/h}v_h\|_{L^2(\Omega_{\delta})}$,
	\item \label{prop.ptlowBound3}$\dim\widetilde\Pi_{h,\delta}\mathscr{E}_{h} = k$.
\end{enumerate}
Here, $\delta_0$ is defined in Theorem \ref{prop.elliptic}.
\end{proposition}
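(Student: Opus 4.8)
The plan is to combine the general upper bound from Section \ref{sec.3} (via Lemma \ref{lem.lambdaholo}) with the semiclassical elliptic estimates of Theorem \ref{prop.elliptic}, applied on the shrunken domain $\Omega_\delta$. The starting point is that any $v_h\in\mathscr{E}_h$, written as $u_h=e^{-\phi/h}v_h$, satisfies the energy bound $\|d^\times_{h,A}u_h\|_{L^2(\Omega)}^2=4\int_\Omega h^2 e^{-2\phi/h}|\partial_{\overline z}v_h|^2\,\dd x\le \lambda_k(h)\int_\Omega e^{-2\phi/h}|v_h|^2\,\dd x$, which is obtained exactly as in \eqref{eq.l1bound} by orthogonality of the $v_{j,h}$ and the variational characterization. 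Restricting the integral on the left to $\Omega_\delta$ only decreases it, and restricting the integral on the right to $\Omega_\delta$ only decreases it too; but to close the argument in the form stated (with $\|e^{-\phi/h}v_h\|_{L^2(\Omega_\delta)}$ on the right) one uses Lemma \ref{lem.normL2}: since $D(x_{\min},h^\alpha)\subset\Omega_\delta$ for $h$ small (as $\delta\le\delta_0$ and $x_{\min}$ is interior), the weighted mass on $\Omega_\delta$ is $(1+o(1))$ times the weighted mass on $\Omega$, uniformly in $\delta\in(0,\delta_0]$. Hence $\|d^\times_{h,A,\Omega_\delta}u_h\|_{L^2(\Omega_\delta)}^2\le (1+o(1))\lambda_k(h)\|u_h\|_{L^2(\Omega_\delta)}^2$.

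For (a): decompose $u_h=\Pi_{h,A,\Omega_\delta}u_h+(\mathrm{Id}-\Pi_{h,A,\Omega_\delta})u_h$. The first summand lies in $\ker d^\times_{h,A,\Omega_\delta}=\ker d^*_{h,A,\Omega_\delta}$, so $d^*_{h,A,\Omega_\delta}u_h=d^*_{h,A,\Omega_\delta}(\mathrm{Id}-\Pi_{h,A,\Omega_\delta})u_h$, and $(\mathrm{Id}-\Pi_{h,A,\Omega_\delta})u_h\in \ker(d^*_{h,A,\Omega_\delta})^\perp\cap\mathsf{Dom}(d^*_{h,A,\Omega_\delta})$. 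Here one must first check that $u_h$ actually belongs to $\mathsf{Dom}(d^*_{h,A,\Omega_\delta})$, which holds because $v_h\in H^1(\Omega)$ (from Lemma \ref{lem.vhL2}) with $\partial_{\overline z}v_h\in L^2$, so $\partial_{\overline z}u_h\in L^2(\Omega_\delta)$ and the characterization in Proposition \ref{prop:zigzagtot}\eqref{eq.b2} applies. Then the first inequality of Theorem \ref{prop.elliptic} gives
\[
\sqrt{2hB_0}\,\|(\mathrm{Id}-\Pi_{h,A,\Omega_\delta})u_h\|_{L^2(\Omega_\delta)}\le \|d^*_{h,A,\Omega_\delta}(\mathrm{Id}-\Pi_{h,A,\Omega_\delta})u_h\|_{L^2(\Omega_\delta)}=\|d^*_{h,A,\Omega_\delta}u_h\|_{L^2(\Omega_\delta)}\le (1+o(1))\sqrt{\lambda_k(h)}\,\|u_h\|_{L^2(\Omega_\delta)},
\]
and translating back via $\Pi_{h,A,\Omega_\delta}u_h=e^{-\phi/h}\widetilde\Pi_{h,\delta}v_h$ yields (a) with the factor $h^{-1/2}$. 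For (b) one argues identically but invokes the second (trace) inequality of Theorem \ref{prop.elliptic}, producing the factor $h^{-2}$ in place of $h^{-1/2}$. Both estimates are uniform in $\delta\in(0,\delta_0]$ precisely because the constants in Theorem \ref{prop.elliptic} are.

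For (c): it suffices to show the map $\widetilde\Pi_{h,\delta}\colon\mathscr{E}_h\to L^2(\Omega_\delta)$ is injective for $h$ small. If $\widetilde\Pi_{h,\delta}v_h=0$ then (a) forces $\|e^{-\phi/h}v_h\|_{L^2(\Omega_\delta)}\le Ch^{-1/2}\sqrt{\lambda_k(h)}\,\|e^{-\phi/h}v_h\|_{L^2(\Omega_\delta)}$; since $\lambda_k(h)=\mathscr{O}(h^{-k+1}e^{2\phi_{\min}/h})$ by Proposition \ref{prop:upper-bound}, the prefactor $Ch^{-1/2}\sqrt{\lambda_k(h)}\to 0$, so $v_h$ vanishes on $\Omega_\delta$, hence (being holomorphic-like, or simply by Lemma \ref{lem.normL2} localizing all the weighted mass on $D(x_{\min},h^\alpha)\subset\Omega_\delta$) $\|e^{-2\phi/h}v_h\|_{L^2(\Omega)}=0$, i.e. $v_h=0$. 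Injectivity on a $k$-dimensional space gives $\dim\widetilde\Pi_{h,\delta}\mathscr{E}_h=k$.

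The main obstacle is the bookkeeping in the first step: one must make sure the reduction of the energy inequality to $\Omega_\delta$ is legitimate — in particular that $u_h\in\mathsf{Dom}(d^*_{h,A,\Omega_\delta})$ (needing the $H^1$ regularity of eigenfunctions from Lemma \ref{lem.vhL2}) — and that the replacement of $\int_\Omega e^{-2\phi/h}|v_h|^2$ by $\int_{\Omega_\delta}e^{-2\phi/h}|v_h|^2$ on the right-hand side costs only $(1+o(1))$ uniformly in $\delta$, which is exactly the content of Lemma \ref{lem.normL2}. Once this is in place, parts (a), (b), (c) are direct applications of Theorem \ref{prop.elliptic} and Proposition \ref{prop:upper-bound}.
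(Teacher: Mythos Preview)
Your proposal is correct and follows essentially the same route as the paper: bound the energy on $\Omega$ via orthogonality, restrict to $\Omega_\delta$, replace $\|e^{-\phi/h}v_h\|_{L^2(\Omega)}$ by $\|e^{-\phi/h}v_h\|_{L^2(\Omega_\delta)}$ using Lemma~\ref{lem.normL2}, then apply Theorem~\ref{prop.elliptic} to $(\mathrm{Id}-\Pi_{h,A,\Omega_\delta})u_h$ for (a) and (b), and deduce (c) from (a) together with Proposition~\ref{prop:upper-bound}. One small quibble: the $H^1$ regularity of $v_h$ (hence $u_h\in\mathsf{Dom}(d^*_{h,A,\Omega_\delta})$) comes simply from $v_h\in H^1_0(\Omega)$ by construction of $\mathscr{E}_h$, not from Lemma~\ref{lem.vhL2}; and your argument for (c) is actually slightly more complete than the paper's, which stops at $v_h=0$ on $\Omega_\delta$ without explicitly invoking Lemma~\ref{lem.normL2} (or unique continuation) to conclude $v_h=0$ on all of $\Omega$.
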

\begin{proof}
For all $v_{h}\in\mathscr{E}_{h}$, we have
\[\begin{split}
4\|e^{-\phi/h} h\partial_{\overline{z}}v_{h}\|^2_{L^2(\Omega_{\delta})}&\leq 4\|e^{-\phi/h} h\partial_{\overline{z}}v_{h}\|^2_{L^2(\Omega)}\\
&\leq \lambda_{k}(h)\|e^{-\phi/h}v_{h}\|^2_{L^2(\Omega)}\leq (1+o(1))\lambda_{k}(h)\|e^{-\phi/h}v_{h}\|^2_{L^2(\Omega_{\delta})}\,,
\end{split}\]
where we used Lemma \ref{lem.normL2} to get the last inequality. With $u_{h}=e^{-\phi/h}v_{h}$, we have
\[4\|e^{-\phi/h} h\partial_{\overline{z}}v_{h}\|^2_{L^2(\Omega_{\delta})}=4\|e^{-\phi/h} h\partial_{\overline{z}}(\mathrm{Id}-\tilde\Pi_{h,\delta})v_{h}\|^2_{L^2(\Omega_{\delta})}=\|d^*_{h,A,\Omega_{\delta}}(\mathrm{Id}-\Pi_{h,A,\Omega_{\delta}})u_{h}\|^2_{L^2(\Omega_{\delta})}\,.\]
Applying Theorem \ref{prop.elliptic}, we get \eqref{prop.ptlowBound1} and \eqref{prop.ptlowBound2}.

Let $v_{h}\in\mathscr{E}_{h}$ be such that $\widetilde\Pi_{h,\delta} v_h= 0$. Recalling Proposition \ref{prop:upper-bound}, we have
\[\begin{split}
	&\norm{e^{-\phi/h}v_h}_{L^2(\Omega_\delta)}
	\leq \norm{e^{-\phi/h}(\mathrm{Id}-\widetilde\Pi_{h,\delta})v_h}_{L^2(\Omega_{\delta})}+\norm{e^{-\phi/h}\widetilde\Pi_{h,\delta}v_h}_{L^2(\Omega_{\delta})}
	\\
	&
	\leq C h^{-k/2} e^{\phi_{min}/h}\|e^{-\phi/h}v_h\|_{L^2(\Omega_{\delta})}\,,
\end{split}\]
so that
\[
	\norm{e^{-\phi/h}v_h}_{L^2(\Omega_\delta)}(1-C h^{-k/2} e^{\phi_{min}/h})\leq 0\,,
\]
and $v_h = 0$ on $\Omega_\delta$ so that $\widetilde\Pi_{h,\delta}$ is injective on $\mathscr{E}_{h}$ and \eqref{prop.ptlowBound3} follows.
\end{proof}

\subsection{A reduction to a holomorphic subspace}
In the following, we assume that $\delta\in(0,\delta_0)$ and $h\in(0,h_0)$.

\begin{notation}\label{not.szego}
We will use the so-called Szeg\"o projection
\[\Pi_+ : L^2(D(0,1))\ni\left(\sum_{\n\in\ZZ}a_n(r)e^{i n s}\right)\mapsto \left(\sum_{\n\in\NN}a_n(r)e^{i n s}\right)\in L^2(D(0,1))\,.\]
Note that the Szeg\"o projection preserves the $L^2$ holomorphic functions.
\end{notation}

\begin{notation}
We let
\[
 E := \min_{\pa D(0,1)}|F'(y)|\pa_{\n}\phi(F(y))\geq c_1 \min_{\Gamma}\left(\nabla\phi\cdot\mathbf{n} \right)\,,
\]
where $F$, $c_1$ are defined in Notation \ref{not.RM}. 
\end{notation}

\begin{lemma}\label{lem:lowbound_annular_energy}
	Assume that $\delta/h\to+\infty$ and $\delta\to0$. Then, for all $v_{h}\in\mathscr{E}_{h}$,
	\[
		2h E  \norm{\Pi_+ \left(v_h\circ F\right)}_{L^2(\pa D(0,1-\delta))}^2(1+o(1))
		\leq 4h^2\int_{\Omega}e^{-2\phi/h}|\partial_{\overline{z}}v_h|^2\dd x\,.
	\]
\end{lemma}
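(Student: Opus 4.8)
The goal is to bound the Cauchy--Riemann energy on $\Omega$ from below by a boundary term on $\partial\Omega_\delta$, after transporting everything to the unit disk. First I would use the change of variable $x=F(y)$ from Notation \ref{not.RM}. Since $F$ is biholomorphic, $\partial_{\overline z}$ transforms covariantly (the conformal factor $\overline{F'}$ comes out) and $\dd x=|F'|^2\dd y$, so the weighted energy on $\Omega$ becomes, up to the constant factors $c_1\le |F'|\le c_2$, a weighted energy on $D(0,1)$ with weight $e^{-2\phi\circ F/h}$. In particular, restricting the integral to $F(D(0,1)\setminus D(0,1-\delta))=\Omega\setminus\Omega_\delta$ only decreases it, so it suffices to bound the energy on the thin annulus $T_\delta^{D}:=D(0,1)\setminus D(0,1-\delta)$ from below.

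On that annulus I would introduce polar coordinates $(r,s)$ and Taylor-expand the exponent $\phi\circ F$ around $r=1$: writing $\check\phi=\phi\circ F$, which vanishes on $\partial D(0,1)$, we have $\check\phi(r,s)=-(1-r)\,\partial_{\n}\check\phi(1,s)+\mathscr{O}((1-r)^2)$, and $\partial_{\n}\check\phi(1,s)=|F'(e^{is})|\,\partial_{\n}\phi(F(e^{is}))\ge E>0$. Since $\delta\to0$ and $\delta/h\to\infty$, the error $\mathscr{O}(\delta^2/h)$ in the exponent... wait, one must be slightly careful: $\delta^2/h$ need not be small. Here the relevant scale is actually set by the exponential weight itself, which confines the integral to $1-r\lesssim h/E$; on that region $(1-r)^2/h\lesssim h/E^2\to0$, so the quadratic remainder is genuinely negligible after we insert the lower bound $\partial_{\n}\check\phi\ge E$. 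Dropping the (nonnegative) $|\partial_s(\cdot)|^2$ contribution to $|2\partial_{\overline z}(v_h\circ F)|^2$ in these coordinates and bounding $\partial_{\n}\check\phi$ from below by $E$, the annular energy is bounded below by $2hE$ times
\[
\inf\Big\{\,h\!\int_0^{2\pi}\!\int_{0}^{\delta} e^{-2tE/h}\,|\partial_t w|^2\,\dd t\,\dd s\ :\ w|_{t=0}=0\Big\}
\]
with $w(t,s)$ the pullback of $v_h\circ F$ through $t=1-r$. At this point the decisive input is the one-dimensional optimization of Lemma \ref{lem.opt}: for each fixed Fourier mode in $s$, minimizing $\int_0^\delta e^{-2tE/h}|\partial_t w|^2\dd t$ subject to the prescribed value of $w$ at $t=\delta$ (i.e. on $\partial D(0,1-\delta)$) and $w(0)=0$ yields exactly the factor $\frac{E/h}{1-e^{-2\delta E/h}}$, which equals $(1+o(1))\,2E/h$ since $\delta/h\to\infty$.

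The reason only the Szeg\H{o} projection $\Pi_+(v_h\circ F)$ appears on the right is that the one-dimensional problem decouples over Fourier modes and, by the computation of Lemma \ref{lem.opt}, the cost of matching a given boundary value $\hat v_m e^{ims}$ on $\partial D(0,1-\delta)$ is at least $\frac{E/h}{1-e^{-2\delta E/h}}|\hat v_m|^2(1-\delta)$ for every $m$; summing over $m$ and discarding the modes with $m<0$ gives precisely $2hE(1+o(1))\|\Pi_+(v_h\circ F)\|_{L^2(\partial D(0,1-\delta))}^2$ as a lower bound (keeping only nonnegative modes can only make the right side smaller, which is the direction we want). Assembling: weighted energy on $\Omega$ $\ge$ weighted energy on the annulus $\ge 2hE(1+o(1))\|\Pi_+(v_h\circ F)\|^2_{L^2(\partial D(0,1-\delta))}$, which is the claim. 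The main obstacle is the bookkeeping of the two small parameters: one must verify that the metric distortion from $F$, the Taylor remainder of $\check\phi$, and the truncation at $t=\delta$ all produce errors that are genuinely $o(1)$ relative to the leading term $\tfrac{E/h}{1-e^{-2\delta E/h}}$ — this is where the regime $\delta/h\to\infty$, $\delta\to0$ is used crucially, together with the uniform bounds $c_1\le|F'|\le c_2$ and $\min_{\partial\Omega}\partial_{\n}\phi>0$ from Hopf's lemma.
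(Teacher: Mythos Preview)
Your central step---``dropping the (nonnegative) $|\partial_s(\cdot)|^2$ contribution'' from $|2\partial_{\overline z}(v_h\circ F)|^2$---is not valid. For complex-valued $w$ one has, after the change to the radial variable $\tau$, $|2\partial_{\overline z}w|^2=|(\partial_\tau-i\partial_s)w|^2$, and this is \emph{not} bounded below by $|\partial_\tau w|^2$: the cross term $-2\,\mathrm{Re}\big(i\,\overline{\partial_\tau w}\,\partial_s w\big)$ has no sign. For a single Fourier mode $w=e^{ins}f(\tau)$ the expression becomes $|f'+nf|^2$, and the infimum of $\int_0^{\tilde\delta}|f'+nf|^2e^{2\tilde E\tau/h}\dd\tau$ over $f$ with $f(0)=0$, $f(\tilde\delta)=1$ is \emph{exponentially small} in $|n|$ when $n<0$, far below the value $\Lambda_0(h)$ you want. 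So the reduction to a purely radial problem followed by Lemma~\ref{lem.opt} does not yield a lower bound. This also explains the tension in your write-up: had the tangential derivative really disappeared, the one-dimensional bound would hold for every mode and $\Pi_+$ would be irrelevant.

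The paper keeps the full operator, Fourier-decomposes in $s$, and on mode $m$ studies $Q_m(w)=\int_0^{\tilde\delta}|(\partial_\tau+m)w|^2e^{2\tilde E\tau/h}\dd\tau$ with $w(0)=0$. The substitution $w=e^{m(\tilde\delta-\tau)}\rho$ gives $Q_m(w)=\int_0^{\tilde\delta}|\rho'|^2e^{2\tilde E\tau/h+2m(\tilde\delta-\tau)}\dd\tau$, which is \emph{increasing in $m$} because $\tilde\delta-\tau\ge 0$; hence $Q_m\ge Q_0$ for $m\ge 0$, and only then does Lemma~\ref{lem.opt} give $Q_m(w)\ge\Lambda_0(h)|w(\tilde\delta)|^2$. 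Negative modes are simply discarded. This monotonicity is the missing idea in your sketch, and it is precisely why only $\Pi_+$ survives. Two smaller corrections: (i) under $x=F(y)$ the Jacobian $|F'|^2$ from $\dd x$ cancels \emph{exactly} against the factor from $\partial_{\overline y}=\overline{F'}\,\partial_{\overline x}$, so the energy identity on $D(0,1)$ is exact, not ``up to $c_1,c_2$''; (ii) the Taylor remainder is handled by the uniform bound $-\check\phi(s,t)\ge Et(1-C\delta)$ on $t\in[0,\delta]$ (carry $\tilde E=E(1-C\delta)$ through), rather than by a heuristic about where the integrand concentrates---no smallness of $\delta^2/h$ is needed.
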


\begin{proof}
\begin{enumerate}[\rm i.]
\item
For all $v\in H^1_{0}(\Omega)$, we let $\check v=v\circ F\in H^1_{0}(D(0,1))$ and $\check\phi=\phi\circ F$. We get that
\[\begin{split}
	&4h^2\int_\Omega e^{-2\phi/h}|\partial_{\overline z}v|^2\dd x
	=
	4h^2\int_{D(0,1)} e^{-2\check\phi/h}|\partial_{\overline y}\check v|^2\dd y\,.
\end{split}\]
\item
In the polar coordinates, the Cauchy-Riemann operator is
\[-\frac{i}{2}\left(\partial_{1}+i\partial_{2}\right)=\frac{-i\gamma'}{2}\left(\frac{\partial_{s}}{1-t}+i\partial_{t}\right)\,.\]
We write $\tilde \psi (s,t)=\check\psi(\eta(s,t))$ for any function $\check\psi$ defined on $D(0,1)$. We have, for all $\check v\in H^1_{0}(D(0,1))$,
\begin{align}\label{eq.remove.interior}
\nonumber
4h^2\int_{D(0,1)}e^{-2\check\phi/h}|\partial_{\overline{z}}\check v|^2\dd x
&\geq 4h^2\int_{D(0,1)\setminus D(0,1-\delta)}e^{-2\check \phi/h}|\partial_{\overline{z}}\check v|^2\dd x\\
&=h^2\int_{0}^{2\pi}\int_{0}^\delta (1-t)^{-1}\left|((1-t)\partial_{t}-i\partial_{s})\tilde v\right|^2e^{-2\tilde\phi/h} \dd s\dd t\,.
\end{align}
\item
Let us notice that 
\begin{equation}\label{eq.normal}
	\pa_{\check \n}\check \phi(y) = |F'(y)|\pa_{\n}\phi(F(y))\,,
\end{equation}
where $x\in\partial \Omega$, $\n(x)$ is the outward pointing unit normal to $\Omega$ at $x$, and $\check \n(y)$ is  outward pointing unit normal to $D(0,1)$ at $y$.

By using the Taylor expansion of $\tilde\phi$, there exists $C>0$ such that, for all $(s,t)\in\Gamma\times(0,\delta)$,
\[\begin{split}
	-\tilde\phi(s,t)
	=-t\partial_{t}\tilde\phi(s,0)+\mathscr{O}(t^2)
	\geq (1-C\delta) Et \,,
\end{split}\]
where $0< E = \min_{\partial D(0,1)}\pa_{\check \n}\check \phi$. 
We have
\[
4h^2\int_{D(0,1)}e^{-2\check\phi/h}|\partial_{\overline{z}}\check v|^2\dd x
\geq
h^2\int_{0}^{2\pi}\int_{0}^\delta (1-t)^{-1}\left|((1-t)\partial_{t}-i\partial_{s})\tilde v\right|^2e^{2(1-C\delta)E t/h} \dd s\dd t\,.\]
Consider the new variable $\tau=-\ln|1-t|$, we get
\[
	4h^2\int_{D(0,1)}e^{-2\check\phi/h}|\partial_{\overline{z}}\check v|^2\dd x
	\geq 
	h^2\int_{0}^{2\pi}\int_{0}^{-\ln|1-\delta|}\left|(\partial_{\tau}-i\partial_{s}) \mathsf v(s,\tau) \right|^2e^{2 (1-C\delta)E (1-e^{-\tau})/h} \dd s\dd \tau\,,\]
where $\mathsf v(s,\tau)=\tilde v(s,1-e^{-\tau})$. Since $1-e^{-\tau}=\tau+\mathscr{O}(\tau^2)=\tau+\mathscr{O}(\delta\tau)$, there exists $\tilde C>0$ such that
\[e^{2(1-C\delta)E (1-e^{-\tau})/h}\geq e^{2E(1-\tilde C\delta)\tau/h} \,.\]
Let $\tilde E=E(1-\tilde C\delta)$ and $\tilde \delta=-\ln|1-\delta|$ so that
\[
	4h^2\int_{D(0,1)}e^{-2\check\phi/h}|\partial_{\overline{z}}\check v|^2\dd x
	\geq h^2\int_{0}^{2\pi}\int_{0}^{\tilde\delta}\left|(\partial_{\tau}-i\partial_{s})\mathsf v(s,\tau)\right|^2e^{2\tilde E \tau/h} \dd s\dd \tau\,.\]
\item
Using the Fourier series and the Parseval formula, we get
\[
\int_{0}^{2\pi}\int_{0}^{\tilde\delta}\left|(\partial_{\tau}-i\partial_{s})\mathsf v(s,\tau)\right|^2e^{2\tilde E \tau/h} \dd s\dd \tau
=2\pi\sum_{m\in\mathbb{Z}}\int_{0}^{\tilde\delta}\left|(\partial_{\tau}+m)\hat{\mathsf{v}}_{m}(\tau)\right|^2e^{2\tilde E \tau/h}\dd \tau\,,
\]
where 
\[\hat{\mathsf{v}}_{m}(\tau)=\frac{1}{2\pi}\int_{0}^{2\pi} e^{-im s}\mathsf v(s,\tau)\dd s\,.\]
Let us consider the quadratic form
\[Q_{m}(w)=\int_{0}^{\tilde\delta}\left|(\partial_{\tau}+m)w\right|^2e^{2\tilde E \tau/h}\dd \tau\]
with boundary conditions
$w(0)=0$ and $w(\tilde \delta)=1$.

Let us notice that
\[
	Q_{m}(w) = \tilde Q_m(\rho) = \int_{0}^{\tilde\delta}\left|\partial_{\tau}\rho\right|^2e^{2\tau\tilde E/h +2(\tilde\delta-\tau)m}\dd \tau\,,
\]
where $w(\tau) = e^{m(\tilde\delta-\tau)}\rho(\tau)$ for all $\tau\in(0,\tilde \delta)$, $\rho(0) = 0$ and $\rho(\tilde\delta) = 1$. 

Since $m\mapsto \tilde Q_m(\rho)$ is an increasing function, we get that 
$
	\tilde Q_m(\rho)\geq \tilde Q_0(\rho)
$
for all $m\geq 0$ and, by Lemma \ref{lem.opt}, 
\[Q_{m}(w)\geq \Lambda_{0}(h)\,,\]
where
\[\Lambda_{0}(h)=\frac{2\tilde E/h}{1-e^{-2\tilde \delta \tilde E/h}}\geq 0\,.\]
We get, by forgetting the negative $m$, we find
\[
4h^2\int_\Omega e^{-2\phi/h}|\partial_{\overline z}v|^2\dd x
\geq 2\pi h^2\Lambda_{0}(h)\sum_{m\geq 0}|{\mathsf{v}}_{m} (\delta)|^2
=h^2\Lambda_{0}(h)\norm{\Pi_+ \left( v \circ F\right)}_{L^2(\pa D(0,1-\delta))}^2\,.
\]
\end{enumerate}
\end{proof}


In the following, we choose $\delta = h^{3/4}$.

Using Proposition \ref{prop.approxH1norm}, we show in the following lemma that we can replace $v_{h}$ by $\widetilde\Pi_{h,\delta}v_h$ in Lemma \ref{lem:lowbound_annular_energy}.
\begin{lemma}\label{lem:lowbound_4}
	Assume that $\delta = h^{3/4}$ and that $\alpha\in\left(\frac{1}{3},\frac{1}{2}\right)$. Then,
	\[
		 2 \theta_0 he^{2\phi_{\min}/h}\norm{\widetilde\Pi_{h,\delta}v_h}_{\mathscr{H}^2(\Omega_\delta)}^2(1+o(1))
		 \leq
		\lambda_k(h)\norm{e^{-\frac{1}{2h}\mathsf{Hess}_{x_{\min}}\phi (x-x_{\min}, x-x_{\min})}\widetilde\Pi_{h,\delta}v_h}_{L^2(D(x_{\min},h^{\alpha}))}^2\,,
	\]
	where 
	\[\theta_0 = \frac{\min_{\pa D(0,1)}|F'(y)|\pa_{\n}\phi(F(y))}{\max_{\pa D(0,1)}|F'(y)|\pa_{\n}\phi(F(y))}\in(0,1]\,,\]
	and where we used the notation 
	\[\|w\|^2_{\mathscr{H}^2(\Omega_{\delta})}:=\int_{\partial D(0,1-\delta)} |w\circ F|^2(\partial_{\n}\phi\circ F) |F'|\dd s\,.\]
\end{lemma}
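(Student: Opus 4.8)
The plan is to combine the lower bound on the Dirichlet energy from Lemma~\ref{lem:lowbound_annular_energy}, the interior holomorphic approximation of Proposition~\ref{prop.approxH1norm}, and a concentration/Laplace-type estimate that ties the boundary Hardy norm on $\Omega_\delta$ to the Bargmann-type norm near $x_{\min}$. First I would set $w_h := \widetilde\Pi_{h,\delta} v_h$, which is holomorphic on $\Omega_\delta$, and write $u_h = e^{-\phi/h} v_h$. By Lemma~\ref{lem:lowbound_annular_energy} applied to $v_h$ (recall $\delta=h^{3/4}$ satisfies $\delta/h\to\infty$, $\delta\to 0$),
\[
2hE\,\|\Pi_+(v_h\circ F)\|_{L^2(\pa D(0,1-\delta))}^2(1+o(1))\le 4h^2\int_\Omega e^{-2\phi/h}|\partial_{\overline z}v_h|^2\,\dd x\le \lambda_k(h)\,\|u_h\|_{L^2(\Omega)}^2,
\]
where the last inequality is the min-max characterization on $\mathscr{E}_h$. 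Then I would replace $v_h$ by $w_h$ in the boundary term: since $w_h$ is holomorphic on $\Omega_\delta$, $w_h\circ F$ is holomorphic on $D(0,1-\delta)$ and hence $\Pi_+(w_h\circ F)=w_h\circ F$ on $\pa D(0,1-\delta)$; the error $\|\Pi_+((v_h-w_h)\circ F)\|_{L^2(\pa D(0,1-\delta))}\le \|(v_h-w_h)\circ F\|_{L^2(\pa D(0,1-\delta))}$ is controlled, after undoing the conformal change of variables (using $c_1\le|F'|\le c_2$ and $\pa_{\check\n}\check\phi=|F'|\pa_\n\phi\circ F$), by $\|e^{\phi/h}\|_{L^\infty(\pa\Omega_\delta)}\cdot\|(\mathrm{Id}-\widetilde\Pi_{h,\delta})v_h\|_{L^2(\pa\Omega_\delta)}$ weighted suitably, and Proposition~\ref{prop.approxH1norm}\eqref{prop.ptlowBound2} together with $\lambda_k(h)=\mathscr{O}(h^{-k+1}e^{2\phi_{\min}/h})$ makes this an $o(1)$ relative error compared with the main boundary term (the exponential weight on $\pa\Omega_\delta$ is $e^{-2\phi/h}\sim 1$ there, and the polynomial losses $h^{-2}$, $h^{-k}$ are beaten by the exponential gain once one is careful). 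This yields, after converting the flat $L^2(\pa D(0,1-\delta))$ norm back to the weighted one via $\pa_\n\phi$,
\[
2h\,\theta_0\,\|w_h\|_{\mathscr{H}^2(\Omega_\delta)}^2(1+o(1))\le \lambda_k(h)\,\|u_h\|_{L^2(\Omega)}^2,
\]
using $E=\min_{\pa D(0,1)}|F'|\pa_\n\phi\circ F$ and $\theta_0 = E/\max_{\pa D(0,1)}|F'|\pa_\n\phi\circ F$ to pass from the constant $E$ to the genuine weight $|F'|\pa_\n\phi\circ F$ inside the Hardy norm (this is exactly where the symmetry-breaking factor $\theta_0$ enters).

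\textbf{From $\|u_h\|_{L^2(\Omega)}^2$ to the Bargmann norm near $x_{\min}$.} Next I would rewrite the right-hand side: $\|u_h\|_{L^2(\Omega)}^2=\int_\Omega e^{-2\phi/h}|v_h|^2\,\dd x$, and by the concentration Lemma~\ref{lem.normL2} this is $(1+o(1))\int_{D(x_{\min},h^\alpha)}e^{-2\phi/h}|v_h|^2\,\dd x$ for $\alpha\in(1/3,1/2)$. On $D(x_{\min},h^\alpha)$ I would, first, replace $v_h$ by $w_h=\widetilde\Pi_{h,\delta}v_h$ — here the error is governed by Proposition~\ref{prop.approxH1norm}\eqref{prop.ptlowBound1} (relative $L^2$ error $Ch^{-1/2}\sqrt{\lambda_k(h)}=o(1)$ after multiplying by $e^{2\phi_{\min}/h}h^{k-1}$ — more precisely one keeps the factor $e^{-2\phi/h}$, uses $\|e^{-\phi/h}(\mathrm{Id}-\widetilde\Pi_{h,\delta})v_h\|_{L^2(\Omega_\delta)}\le Ch^{-1/2}\sqrt{\lambda_k(h)}\,\|e^{-\phi/h}v_h\|_{L^2(\Omega_\delta)}$, which is $o(1)$ times the weighted norm), and, second, Taylor-expand $\phi$ to second order around $x_{\min}$ exactly as in~\eqref{eq.tayint}: on $D(x_{\min},h^\alpha)$, $\tfrac{\phi(x)-\phi_{\min}}{h}=\tfrac{1}{2h}\mathsf{Hess}_{x_{\min}}\phi(x-x_{\min},x-x_{\min})+\mathscr{O}(h^{3\alpha-1})$, so
\[
\int_{D(x_{\min},h^\alpha)}e^{-2\phi/h}|w_h|^2\,\dd x = e^{-2\phi_{\min}/h}(1+o(1))\,\big\|e^{-\frac{1}{2h}\mathsf{Hess}_{x_{\min}}\phi(x-x_{\min},x-x_{\min})}\,\widetilde\Pi_{h,\delta}v_h\big\|_{L^2(D(x_{\min},h^\alpha))}^2.
\]
Substituting back gives precisely the claimed inequality
\[
2\theta_0 h e^{2\phi_{\min}/h}\,\|\widetilde\Pi_{h,\delta}v_h\|_{\mathscr{H}^2(\Omega_\delta)}^2(1+o(1))\le \lambda_k(h)\,\big\|e^{-\frac{1}{2h}\mathsf{Hess}_{x_{\min}}\phi(x-x_{\min},x-x_{\min})}\widetilde\Pi_{h,\delta}v_h\big\|_{L^2(D(x_{\min},h^\alpha))}^2.
\]

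\textbf{Main obstacle.} The delicate point is making the two substitutions $v_h\rightsquigarrow \widetilde\Pi_{h,\delta}v_h$ (in the boundary Hardy term and in the interior $L^2$ term) uniform and genuinely $o(1)$ at the right scale: one must track the exponential weights $e^{-2\phi/h}$ on $\Omega_\delta$ versus on $\pa\Omega_\delta$ (where $\phi=\mathscr{O}(\delta)=\mathscr{O}(h^{3/4})$, so $e^{-2\phi/h}$ is not $1$ but $e^{\mathscr{O}(h^{-1/4})}$, which must be absorbed) and check that the polynomial factors $h^{-1/2}$, $h^{-2}$, $h^{-k}$ appearing in Proposition~\ref{prop.approxH1norm} together with $\sqrt{\lambda_k(h)}\sim h^{(1-k)/2}e^{\phi_{\min}/h}$ still produce an error that is $o(1)$ relative to the leading terms — this uses crucially that $\|\widetilde\Pi_{h,\delta}v_h\|$ on the boundary of $\Omega_\delta$ and on $D(x_{\min},h^\alpha)$ are comparable via the mean-value / Cauchy estimates for holomorphic functions, which forces one to quantify how a holomorphic function on $\Omega_\delta$ distributes its mass. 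Everything else (the conformal pullback, Parseval, the Taylor expansions, the concentration lemma) is routine given the results already established; the bookkeeping of scales in the two approximation steps is where the real work lies.
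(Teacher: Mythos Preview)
Your proposal is correct and follows essentially the same route as the paper: Lemma~\ref{lem:lowbound_annular_energy} for the boundary energy lower bound, Proposition~\ref{prop.approxH1norm}(b) together with the factor $e^{\phi/h}\sim e^{-Eh^{-1/4}}$ on $\partial\Omega_\delta$ to swap $\Pi_+$ for the holomorphic projection (this is the $\mathscr{O}(h^\infty)$ gain the paper uses), the passage from $E$ to $\theta_0$ via the weight $|F'|\partial_\n\phi\circ F$, then Lemma~\ref{lem.normL2} and Proposition~\ref{prop.approxH1norm}(a) on the right-hand side followed by the Taylor expansion of $\phi$. One small correction: your parenthetical ``$e^{-2\phi/h}\sim 1$ on $\partial\Omega_\delta$'' is wrong (as you yourself note in the obstacle paragraph, it is $e^{\mathscr{O}(h^{-1/4})}$), and it is precisely this growth that, when inverted, kills the polynomial losses $h^{-2}\sqrt{\lambda_k(h)}$ and makes the boundary substitution an $\mathscr{O}(h^\infty)$ relative error---so the bookkeeping you flag as the main obstacle is in fact straightforward once this exponential gain is used.
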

\begin{remark}
Taking $\delta=0$ in the definition of $\|w\|^2_{\mathscr{H}^2(\Omega_{\delta})}$ above, gives
\[
	\|w\|^2_{\mathscr{H}^2(\Omega_{0})} = \int_{\partial D(0,1)} |w\circ F|^2(\partial_{\n}\phi\circ F) |F'|\dd s = \int_{\partial \Omega}|w|^2\partial_\n\phi \dd y = \Nh(w)^2\,,
\]
for $w\in \mathscr{H}^2(\Omega)$.
\end{remark}

\begin{proof}[Proof of Lemma \ref{lem:lowbound_4}]
	\begin{enumerate}[\rm i.]
	\item
	From Lemma \ref{lem:lowbound_annular_energy} and the definition of $v_{h}$, we have
	\[\begin{split}
		 2E h\norm{\Pi_+ \left(v_h\circ F\right)}_{L^2(\pa D(0,1-\delta))}^2(1+o(1))
		 &\leq h^2\int_{\Omega}e^{-2\phi/h}|\partial_{\overline{z}}v_h|^2\dd x\\
		 &\leq \lambda_{k}(h)\int_{\Omega}e^{-2\phi/h}|v_h|^2\dd x\,.
	\end{split}
	\]
	Thus, by Lemma \ref{lem.normL2},
	\begin{equation}\label{eq:holobord1}
		2E h\norm{\Pi_+ \left(v_h\circ F\right)}_{L^2(\pa D(0,1-\delta))}^2(1+o(1))\leq  \lambda_{k}(h)\int_{D(x_{\rm min},h^{\alpha})}e^{-2\phi/h}|v_h|^2\dd x\,.
	\end{equation}
	\item
	 Consider $\check\Pi_{h,\delta}$ is the orthogonal projection on $\mathscr{H}(D(0,1-\delta))$ for the scalar product $L^2(e^{-2\check\phi/h}\dd y)$.  Note that $\check\Pi_{h,\delta}\Pi_{+}= \Pi_{+}\check\Pi_{h,\delta} = \check\Pi_{h,\delta}$ (see Notation \ref{not.szego}).
	Let us now replace $\Pi_{+}$ by $\check\Pi_{h,\delta}$.	
	Proposition \ref{prop.approxH1norm} ensures that
	\[\begin{split}
		\|e^{-\check\phi/h}(\mathrm{Id}-\check\Pi_{h,\delta})v_h\circ F\|_{L^2(\partial D(0,1-\delta))}
		&\leq C h^{-2}\sqrt{\lambda_{k}(h)}\|e^{-\phi/h}v_h\|_{L^2(\Omega_{\delta})}\,,
	\end{split}\]
	Using the Taylor expansion of $\check\phi$ near the boundary and \eqref{eq.normal}, we have, on $\partial D(0,1-\delta)$, 
	\[
		e^{-\check\phi/h}\geq (1+o(1))e^{Eh^{-1/4}}\,,
	\]
	so that 
	\begin{equation}\label{eq.approx-bord}
		\|(\mathrm{Id}-\check\Pi_{h,\delta})v_h\circ F\|_{L^2(\partial D(0,1-\delta))}\leq C h^{-2} \sqrt{\lambda_{k}(h)} e^{-Eh^{-1/4}}\|e^{-\phi/h}v_h\|_{L^2(\Omega_{\delta})} \,.
	\end{equation}
	Since $\Pi_{+}$ is a projection and $\check \Pi_{h,\delta}$ is valued in the holomorphic functions,
	\[\begin{split}
		\|(\mathrm{Id}-\check\Pi_{h,\delta})v_h\circ F\|_{L^2(\partial D(0,1-\delta))}
		&\geq \|\Pi_+(\mathrm{Id}-\check\Pi_{h,\delta})v_h\circ F\|_{L^2(\partial D(0,1-\delta))} 
		\\&
		 \geq\|\Pi_+v_h\circ F-\check\Pi_{h,\delta}v_h\circ F\|_{L^2(\partial D(0,1-\delta))} 
		\\&\geq|\|\Pi_+v_h\circ F\|_{L^2(\partial D(0,1-\delta))}-\|\check\Pi_{h,\delta}v_h\circ F\|_{L^2(\partial D(0,1-\delta))}|\,.
	\end{split}\]
Then, with \eqref{eq.approx-bord},
	\[
		\|\Pi_+v_h\circ F\|_{L^2(\partial D(0,1-\delta))}\geq \|\check\Pi_{h,\delta}v_h\circ F\|_{L^2(\partial D(0,1-\delta))}- \mathscr{O}(h^\infty)\sqrt{\lambda_k(h)}\|e^{-\phi/h}v_h\|_{L^2(\Omega_{\delta})}\,.
	\]
By \eqref{eq:holobord1} and Lemma \ref{lem.normL2},
	\begin{equation}\label{eq.amelior}
		\sqrt{2Eh}\|\check\Pi_{h,\delta}v_h\circ F\|_{L^2(\partial D(0,1-\delta))}(1+o(1))\leq 
		\sqrt{\lambda_k(h)}\|e^{-\phi/h}v_h\|_{L^2(D(x_{\min},h^{\alpha}))}\,.
\end{equation}
Thus, coming back to $\Omega_{\delta}$ (without forgetting the Jacobian of $F$),
\[			\sqrt{2Eh}\||F'(F(\cdot))|^{-1/2}\widetilde \Pi_{h,\delta}v_h\|_{L^2(\partial\Omega_\delta)}(1+o(1))\leq 
		\sqrt{\lambda_k(h)}\|e^{-\phi/h}v_h\|_{L^2(D(x_{\min},h^{\alpha}))}\,.\]
Then, by using the (weighted) Hardy norm, we have
\begin{equation}\label{eq.intermediate}
\sqrt{2\theta_{0}h}\|\widetilde \Pi_{h,\delta}v_h\|_{\mathscr{H}^2(\Omega_\delta)}(1+o(1))\leq 
		\sqrt{\lambda_k(h)}\|e^{-\phi/h}v_h\|_{L^2(D(x_{\min},h^{\alpha}))}\,.
		\end{equation}
	\item
	Using Proposition \ref{prop.approxH1norm} and Lemma \ref{lem.normL2}, we get
	\[\begin{split}
		&\|e^{-\phi/h}v_h\|_{L^2(D(x_{\rm min},h^{\alpha}))}
		\\&
		\leq\|e^{-\phi/h}\widetilde\Pi_{h,\delta}v_h\|_{L^2(D(x_{\rm min},h^{\alpha}))} +\|e^{-\phi/h}(\mathrm{Id}-\widetilde\Pi_{h,\delta})v_h\|_{L^2(D(x_{\min},h^{\alpha}))}
		\\&
		\leq\|e^{-\phi/h}\widetilde\Pi_{h,\delta}v_h\|_{L^2(D(x_{\rm min},h^{\alpha}))} +\|e^{-\phi/h}(\mathrm{Id}-\widetilde\Pi_{h,\delta})v_h\|_{L^2(\Omega_\delta)}
		\\&
		\leq\|e^{-\phi/h}\widetilde\Pi_{h,\delta}v_h\|_{L^2(D(x_{\rm min},h^{\alpha}))} +C h^{-\frac{1}{2}}\sqrt{\lambda_{k}(h)} \|e^{-\phi/h}v_h\|_{L^2(D(x_{\min},h^{\alpha}))}\,.
	\end{split}\]
Combing this with \eqref{eq.intermediate} and Proposition \ref{prop:upper-bound}, we find
\[
		 2 \theta_0 h\norm{\widetilde\Pi_{h,\delta}v_h}_{\mathcal{H}^2(\Omega_\delta)}^2(1+o(1))
		 \leq
		\lambda_k(h)\|e^{-\phi/h}\widetilde\Pi_{h,\delta}v_h\|_{L^2(D(x_{\rm min},h^{\alpha}))}^2\,.
	\]
\item Using the Taylor expansion of $\phi$ at $x_{\rm min}$, we get, for all for $x\in D(x_{\rm min},h^{\alpha})$ ,
\[
	\frac{\phi(x) -\phi_{\rm min}}{h} = \frac{1}{2h}\mathsf{Hess}_{x_{\min}}\phi (x-x_{\rm min}, x-x_{\rm min}) + \mathscr{O}(h^{3\alpha-1})\,,
\]
and the conclusion follows.

\end{enumerate}
\end{proof}

\begin{remark}
Lemma \ref{lem:lowbound_4} shows in particular that
\[2(1+o(1))\theta_{0}h\tilde\lambda_{k}(h)\leq \lambda_{k}(h)\,,\]
where
\[\tilde\lambda_{k}(h)=\inf_{\begin{array}{c}V\subset \mathscr{H}^2(\Omega_\delta)\\\dim V=k\end{array}}\sup_{v\in V\setminus\{0\}}
		\frac{\norm{v}_{\mathcal{H}^2(\Omega_\delta)}^2}{\norm{e^{- \frac{1}{2h}\mathsf{Hess}_{x_{\min}}\phi (x-x_{\rm min}, x-x_{\rm min}) }v}_{L^2(D(x_{\rm min},h^{\alpha}))}^2}\,.\]
In the next section, we will essentially provide a lower bound of $\tilde\lambda_{k}(h)$.
	Note that if we could replace $\mathscr{H}^2(\Omega_\delta)$ by the set of polynomials, then, we would get the bound presented in Remark \ref{rem.naive}. Nevertheless, there is no hope to do so since in general,
	\[
		\disth((z-z_{\min})^{k-1},\mathscr{H}^2_{k}(\Omega))<\Nb((z-z_{\min})^{k-1})\,,
	\]
	(This inequality is an equality in the radial case).
	We still have to work to get the lower bound of Theorem \ref{theo.main}.
\end{remark}

\subsection{Reduction to a polynomial subspace: Proof of Proposition \ref{prop.lowbd}}\label{sec.final}
We can now prove Proposition \ref{prop.lowbd}.

\begin{enumerate}[\rm i.]

\item By using \eqref{eq.CCS}, there exists $C>0, h_{0}>0$ such that, for all $h\in(0,h_{0})$, for all $w\in\mathscr{H}^2(\Omega_{\delta})$,  all $z_0\in D(x_{\min},h^{\alpha})$, and all $n\in \{0,\dots k\}$,
\begin{equation}\label{eq.CCS2}
|w^{(n)}(z_{0})|\leq C\|w\|_{\mathscr{H}^2(\Omega_{\delta})}\,.
\end{equation}
Let us define, for all $w\in \mathscr{H}^2(\Omega_{\delta})$,
\[N_{h}(w)=\norm{e^{-\frac{1}{2h}\mathsf{Hess}_{x_{\min}}\phi (x-x_{\min}, x-x_{\min})}w}_{L^2(D(x_{\min},h^{\alpha}))}\,.\]
We let $w_{h}=\widetilde\Pi_{h,\delta}v_h$. By the Taylor formula, we can write
\[w_{h}=\mathrm{Tayl}_{k-1}w_{h}+R_{k-1}(w_{h})\,,\]
where
\[\mathrm{Tayl}_{k-1}w_{h}=\sum_{n=0}^{k-1}\frac{w^{(n)}_{h}(z_{\min})}{n!}(z-z_{\min})^n\,,\]
and, for all $z\in D(z_{\min}, h^\alpha)$,
\[|R_{k-1}(w_{h})(z)|\leq C|z-z_{\min}|^k\sup_{D(z_{\min}, h^\alpha)}|w_{h}^{(k)}|\,.\]
With \eqref{eq.CCS2} and a rescaling, the Taylor remainder satisfies
\[N_{h}(R_{k-1}(w_{h}))\leq Ch^{\frac{k}{2}}h^{\frac{1}{2}}\|w_{h}\|_{\mathscr{H}^2(\Omega_{\delta})}\,.\]
Thus, by the triangle inequality,
\[N_{h}(w_{h})\leq N_{h}(\mathrm{Tayl}_{k-1}w_{h})+Ch^{\frac{k}{2}}h^{\frac{1}{2}}\|w_{h}\|_{\mathscr{H}^2(\Omega_{\delta})}\,.\]
Thus, with Lemma \ref{lem:lowbound_4}, we get
\[(1+o(1))e^{\phi_{\min}/h}\sqrt{2\theta_{0}h}\|w_{h}\|_{\mathscr{H}^2(\Omega_{\delta})}\leq \sqrt{\lambda_{k}(h)}N_{h}(\mathrm{Tayl}_{k-1}w_{h})+C\sqrt{\lambda_{k}(h)}h^\frac{1+k}{2}\|w_{h}\|_{\mathscr{H}^2(\Omega_{\delta})}\,,\]
so that, thanks to Proposition \ref{prop:upper-bound},
\begin{equation}\label{eq.ineqTaylor}
(1+o(1))e^{\phi_{\min}/h}\sqrt{2\theta_{0}h}\|w_{h}\|_{\mathscr{H}^2(\Omega_{\delta})}\leq \sqrt{\lambda_{k}(h)}N_{h}(\mathrm{Tayl}_{k-1}w_{h})\leq \sqrt{\lambda_{k}(h)}\hat N_{h}(\mathrm{Tayl}_{k-1}w_{h})\,,
\end{equation}
with 
\[\hat N_{h}(w)=\norm{e^{-\frac{1}{2h}\mathsf{Hess}_{x_{\min}}\phi (x-x_{\min}, x-x_{\min})}w}_{L^2(\RR^2)}\,.\]
This inequality shows in particular that $\mathrm{Tayl}_{k-1}\widetilde\Pi_{\delta,h}$ is injective on $\mathscr{E}_{h}$ and
\begin{equation}\label{eq.dim-k}
\mathrm{dim}\mathrm{Tayl}_{k-1}(\widetilde\Pi_{\delta,h}\mathscr{E}_{h})=k\,.
\end{equation}

\item
Let us recall that
\[\mathscr{H}^2_{k}(\Omega_{\delta})=\{\psi\in\mathscr{H}^2(\Omega_{\delta}) : \forall n\in\{0,\ldots, k-1\}\,, \psi^{(n)}(x_{\min})=0\}\,.\]
Since $(w_{h}-\mathrm{Tayl}_{k-1}w_{h})\in\mathscr{H}^2_{k}(\Omega_{\delta})$, we have, by the triangle inequality,
\[
	\begin{split}
		&\|w_{h}\|_{\mathscr{H}^2(\Omega_{\delta})}
		\geq
		\left\| \frac{w_{h}^{(k-1)}(z_{\min})}{(k-1)!}(z-z_{\min})^{k-1}+(w_{h}-\mathrm{Tayl}_{k-1}w_{h})\right\|_{\mathscr{H}^2(\Omega_{\delta})}
		-\left\|\mathrm{Tayl}_{k-2}w_{h} \right\|_{\mathscr{H}^2(\Omega_{\delta})}
		\\
		&\geq
		\frac{|w_{h}^{(k-1)}(z_{\min})|}{(k-1)!}\disthd((z-z_{\min})^{k-1},\mathscr{H}^2_{k}(\Omega_{\delta}))
		-\left\|\mathrm{Tayl}_{k-2}w_{h} \right\|_{\mathscr{H}^2(\Omega_{\delta})}\,,
	\end{split}
\]
where
\[\begin{split}
	&\disthd((z-z_{\min})^{k-1},\mathscr{H}^2_{k}(\Omega_{\delta})) 
	\\&\qquad= \inf\left\{
		\left\| (z-z_{\min})^{k-1}-Q(z)\right\|_{\mathscr{H}^2(\Omega_{\delta})}\,, \mbox{ for all }Q\in \mathscr{H}^2_{k}(\Omega_{\delta})
	\right\}\,.
\end{split}\]

Using again the triangle inequality,
\[\|\mathrm{Tayl}_{k-2}w_{h}\|_{\mathscr{H}^2(\Omega_{\delta})}\leq C\sum_{n=0}^{k-2}|w_{h}^{(n)}(z_{\min})|\,.\]
Moreover,
\[\begin{split}
\sum_{n=0}^{k-2}|w_{h}^{(n)}(z_{\min})|\leq h^{-\frac{k-2}{2}}\sum_{n=0}^{k-2}h^{\frac{n}{2}}|w_{h}^{(n)}(z_{\min})|&\leq h^{-\frac{k-2}{2}}\sum_{n=0}^{k-1}h^{\frac{n}{2}}|w_{h}^{(n)}(z_{\min})|\\
&\leq Ch^{-\frac{k-2}{2}} h^{-\frac{1}{2}}\hat N_{h}(\mathrm{Tayl}_{k-1}w_{h})\,,
\end{split}
\]
where we used the rescaling property
\begin{equation}\label{eq.rescaling.Nh}
\hat N_{h}\left(\sum_{n=0}^{k-1} c_{n}(z-z_{\min})^n\right)=h^{\frac{1}{2}}\hat N_{1}\left(\sum_{n=0}^{k-1} c_{n}h^{\frac{n}{2}}(z-z_{\min})^n\right)\,,
\end{equation}
and the equivalence of the norms in finite dimension:
\[\exists C>0\,,\forall d\in\mathbb{C}^k\,,\quad C^{-1}\sum_{n=0}^{k-1}|d_{n}|\leq \hat N_{1}\left(\sum_{n=0}^{k-1} d_{n}(z-z_{\min})^n\right) \leq C\sum_{n=0}^{k-1}|d_{n}|\,.\]
We find
\begin{multline*}
\|w_{h}\|_{\mathscr{H}^2(\Omega_{\delta})}\geq \frac{|w_{h}^{(k-1)}(z_{\min})|}{(k-1)!}\disthd((z-z_{\min})^{k-1},\mathscr{H}^2_{k}(\Omega_{\delta}))\\
-Ch^{-\frac{k-2}{2}} h^{-\frac{1}{2}}\hat N_{h}(\mathrm{Tayl}_{k-1}w_{h})\,,
\end{multline*}
and thus, by \eqref{eq.ineqTaylor},
\begin{multline}\label{eq.ineqTaylor2}
(1+o(1))e^{\phi_{\min}/h}\sqrt{2\theta_{0}h}\frac{|w_{h}^{(k-1)}(z_{\min})|}{(k-1)!}\disthd((z-z_{\min})^{k-1},\mathscr{H}^2_{k}(\Omega_{\delta}))\\
\leq \left(\sqrt{\lambda_{k}(h)}+Ch^{\frac{2-k}{2}}e^{\phi_{\min}/h}\right)\hat N_{h}(\mathrm{Tayl}_{k-1}w_{h})\,.
\end{multline}

\item Since we have \eqref{eq.dim-k}, we deduce that
\begin{multline}\label{eq.ineqTaylor4}
(1+o(1))e^{\phi_{\min}/h}\sqrt{2\theta_{0}h}\disthd((z-z_{\min})^{k-1},\mathscr{H}^2_{k}(\Omega_{\delta}))
\sup_{c\in\mathbb{C}^k} \frac{|c_{k-1}|}{\hat N_{h}(\sum_{n=0}^{k-1}c_{n}(z-z_{\min})^n)}\\
\leq \sqrt{\lambda_{k}(h)}+Ch^{\frac{2-k}{2}}e^{\phi_{\min}/h}\,.
\end{multline}
By \eqref{eq.rescaling.Nh}, we infer
\[h^{\frac{1}{2}}\sup_{c\in\mathbb{C}^k} \frac{|c_{k-1}|}{\hat N_{h}(\sum_{n=0}^{k-1}c_{n}(z-z_{\min})^n)}=\sup_{c\in\mathbb{C}^k} \frac{h^{\frac{1-k}{2}}|c_{k-1}|}{\hat N_{1}(\sum_{n=0}^{k-1}c_{n}(z-z_{\min})^n)}\,.\]
Since $\hat N_{1}$ is related to the Segal-Bargmann norm $N_{\mathcal{B}}$ via a translation, and recalling Notation \ref{not.Pn}, we get 
\[\sup_{c\in\mathbb{C}^k} \frac{|c_{k-1}|}{\hat N_{1}(\sum_{n=0}^{k-1}c_{n}(z-z_{\min})^n)}=\sup_{c\in\mathbb{C}^k} \frac{|c_{k-1}|}{N_{\mathcal{B}}(\sum_{n=0}^{k-1}c_{n}z^n)}=\frac{1}{N_{\mathcal{B}}(P_{k-1})}\,.\]
Thus,
\begin{equation}\label{eq.ineqTaylor5}
(1+o(1))h^{\frac{1-k}{2}}e^{\phi_{\min}/h}\sqrt{2\theta_{0}}\frac{\disthd((z-z_{\min})^{k-1},\mathscr{H}^2_{k}(\Omega_{\delta}))}{N_{\mathcal{B}}(P_{k-1})}\leq \sqrt{\lambda_{k}(h)}\,.
\end{equation}

\item
Since $\Omega$ is regular enough, the Riemann mapping theorem ensures that
\[
	\lim_{h\to 0}\disthd\left((z-z_{\rm min})^{k-1},\mathscr{H}^2_{k}(\Omega_{\delta})\right) 
	= \disth\left((z-z_{\rm min})^{k-1},\mathscr{H}^2_k( \Omega)\right)\,.
\]
The conclusion follows.
\end{enumerate}

\subsection{Proof of Corollary \ref{cor.main}}\label{sec.Riemann}
We recall Notation \ref{not.RM} where $F$, $c_1$ and $c_2$ are defined. Let us notice that we can choose $F$ such that $F(0) = x_{\rm min}$.

For all $v\in H^1_{0}(\Omega)$, we let $\check v=v\circ F\in H^1_{0}(D(0,1))$, and we get, 
\[\frac{1}{c_{2}}\frac{\int_{D(0,1)}e^{-2\check\phi/h}|\partial_{\overline{y}}\check v|^2\dd y}{\int_{D(0,1)}e^{-2\check\phi/h}|\check v|^2\dd y}\leq\frac{\int_{D(0,1)}e^{-2\check\phi/h}|\partial_{\overline{y}}\check v|^2\dd y}{\int_{D(0,1)}e^{-2\check\phi/h}|\check v|^2|F'(y)|^2\dd y}=\frac{\int_{\Omega} e^{-2\phi/h}|\partial_{\overline{z}}v|^2\dd x}{\int_{\Omega}e^{-2\phi/h}|v|^2\dd x}\,,\]
where $\check\phi=\phi\circ F$ has a unique and non-degenerate minimum at $y=0$ and $\check\phi(0)=\phi_{\min}$. 

In the same way, we get
\[\frac{\int_{\Omega} e^{-2\phi/h}|\partial_{\overline{z}}v|^2\dd x}{\int_{\Omega}e^{-2\phi/h}|v|^2\dd x}\leq \frac{1}{c_{1}}\frac{\int_{D(0,1)}e^{-2\check\phi/h}|\partial_{\overline{y}}\check v|^2\dd y}{\int_{D(0,1)}e^{-2\check\phi/h}|\check v|^2\dd y}\,.\]

These inequalities, the min-max principle, and Theorem \ref{theo.main} imply Corollary \ref{cor.main}.

\appendix
\section{A unidimensional optimization problem}
The goal of this section is to minimize, for each fixed $s$, the quantity,
\[	
	\int_{0}^{\eps} e^{2t\partial_{\textbf{n}}\phi(s)/h}\left|\partial_{t}\rho\right|^2 \dd t\,.
\]
This leads to the following lemma.
\begin{lemma}\label{lem.opt}
For $\alpha$ and $\eps>0$, we let $I=(0,\eps)$ and 
\[\mathscr{V}=\{\rho\in H^1(I) : \rho(0)=0\,,\quad \rho(\eps)=1\}\,.\]
Let us consider, for all $\rho\in\mathscr{V}$,
\[F_{\alpha,\eps}(\rho):=\int_{0}^\eps e^{\alpha \ell}|\rho'(\ell)|^2\dd\ell\,.\]
\begin{enumerate}[{\rm (a)}]
	\item The minimization problem 
	\[
		\inf\{F_{\alpha,\eps}(\rho)\,, \rho\in\mathscr{V} \}
	\]
	has a unique minimizer 
	\[\rho_{\alpha,\eps}(\ell)=\frac{1-e^{-\alpha\ell}}{1-e^{-\alpha\eps}}\,,\]
	\item We have
	\[
		\inf\{F_{\alpha,\eps}(\rho)\,, \rho\in\mathscr{V} \} = \frac{\alpha}{1-e^{-\eps\alpha}}\,,
	\]
	\item Let $c_0>0$. Assume $1-e^{-\alpha\eps}\geq c_0$. There exists $C>0$ such that
\[
	\int_0^\eps e^{\alpha \ell}|\pa_\alpha \rho_{\alpha,\eps}|^2\dd \ell \leq C\left(
		\alpha^{-3} + e^{-\alpha\eps}\eps^2\alpha^{-1}
	\right)\,.
\]

\end{enumerate}
\end{lemma}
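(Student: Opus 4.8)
The plan is to treat this as an elementary one-dimensional calculus of variations problem and compute everything explicitly. For part (a), the functional $F_{\alpha,\eps}(\rho)=\int_0^\eps e^{\alpha\ell}|\rho'(\ell)|^2\dd\ell$ is strictly convex in $\rho'$, so a minimizer over the affine space $\mathscr V$ is unique if it exists; existence follows from the direct method (the sublevel sets are bounded in $H^1(I)$ since $e^{\alpha\ell}\geq \min(1,e^{\alpha\eps})>0$ on $I$ controls the $L^2$ norm of $\rho'$, and the Dirichlet data fix the additive constant). The Euler--Lagrange equation is $(e^{\alpha\ell}\rho')'=0$, i.e. $\rho'(\ell)=c\,e^{-\alpha\ell}$ for some constant $c$; integrating and imposing $\rho(0)=0$, $\rho(\eps)=1$ gives $c=\alpha/(1-e^{-\alpha\eps})$ and hence
\[
\rho_{\alpha,\eps}(\ell)=\frac{1-e^{-\alpha\ell}}{1-e^{-\alpha\eps}}\,,
\]
which is exactly the claimed minimizer. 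One should also note $\alpha>0$ is needed only to name the formula; for $\alpha<0$ the same computation works, and the case $\alpha=0$ is the linear interpolant — but since in the application $\alpha=2\partial_\n\phi/h>0$, I will simply state the result as in the lemma.

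For part (b), I would just substitute $\rho_{\alpha,\eps}$ back into $F_{\alpha,\eps}$: since $\rho_{\alpha,\eps}'(\ell)=\dfrac{\alpha e^{-\alpha\ell}}{1-e^{-\alpha\eps}}$, one gets
\[
F_{\alpha,\eps}(\rho_{\alpha,\eps})=\int_0^\eps e^{\alpha\ell}\,\frac{\alpha^2 e^{-2\alpha\ell}}{(1-e^{-\alpha\eps})^2}\dd\ell=\frac{\alpha^2}{(1-e^{-\alpha\eps})^2}\int_0^\eps e^{-\alpha\ell}\dd\ell=\frac{\alpha^2}{(1-e^{-\alpha\eps})^2}\cdot\frac{1-e^{-\alpha\eps}}{\alpha}=\frac{\alpha}{1-e^{-\alpha\eps}}\,,
\]
as claimed. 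Alternatively this can be read off from the fact that for a quadratic functional the minimal value equals $c$ times the boundary jump, but the direct substitution is cleaner.

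For part (c), I would differentiate the explicit formula for $\rho_{\alpha,\eps}$ with respect to $\alpha$. Writing $D(\alpha):=1-e^{-\alpha\eps}$ and $N(\alpha,\ell):=1-e^{-\alpha\ell}$, we have $\rho_{\alpha,\eps}=N/D$ and
\[
\pa_\alpha\rho_{\alpha,\eps}=\frac{\pa_\alpha N}{D}-\frac{N\,\pa_\alpha D}{D^2}=\frac{\ell e^{-\alpha\ell}}{D}-\frac{(1-e^{-\alpha\ell})\,\eps e^{-\alpha\eps}}{D^2}\,.
\]
Using $D\geq c_0$, the two terms are bounded pointwise by $C\ell e^{-\alpha\ell}$ and $C\eps e^{-\alpha\eps}$ respectively, so
\[
\int_0^\eps e^{\alpha\ell}|\pa_\alpha\rho_{\alpha,\eps}|^2\dd\ell\leq C\int_0^\eps e^{\alpha\ell}\bigl(\ell^2 e^{-2\alpha\ell}+\eps^2 e^{-2\alpha\eps}\bigr)\dd\ell= C\int_0^\eps \ell^2 e^{-\alpha\ell}\dd\ell+C\eps^2 e^{-2\alpha\eps}\int_0^\eps e^{\alpha\ell}\dd\ell\,.
\]
The first integral is bounded by $\int_0^\infty \ell^2 e^{-\alpha\ell}\dd\ell=2\alpha^{-3}$; the second equals $\eps^2 e^{-2\alpha\eps}\cdot\alpha^{-1}(e^{\alpha\eps}-1)\leq \eps^2 e^{-\alpha\eps}\alpha^{-1}$. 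Adding these gives the bound $C(\alpha^{-3}+e^{-\alpha\eps}\eps^2\alpha^{-1})$. The only mildly delicate point — the one I would be most careful about — is keeping track that the constant $C$ depends only on $c_0$ (through the bounds $D^{-1}\leq c_0^{-1}$ and $D^{-2}\leq c_0^{-2}$) and not on $\alpha$ or $\eps$, so that the estimate is genuinely uniform; everything else is routine bookkeeping with elementary integrals.
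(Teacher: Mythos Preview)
Your proof is correct and follows essentially the same route as the paper: existence via the direct method and strict convexity, the Euler--Lagrange equation $(e^{\alpha\ell}\rho')'=0$ solved explicitly, direct substitution for the minimal value, and for (c) the explicit formula for $\pa_\alpha\rho_{\alpha,\eps}$ combined with the same elementary integral bounds using $1-e^{-\alpha\eps}\geq c_0$. Your treatment of part (c) is in fact slightly tidier than the paper's, which splits the $|b|^2$ contribution into two integrals rather than using the single bound $\eps^2 e^{-2\alpha\eps}\int_0^\eps e^{\alpha\ell}\dd\ell\leq \eps^2 e^{-\alpha\eps}\alpha^{-1}$ as you do.
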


\begin{proof}
\begin{enumerate}[\rm i.]
	\item 
	Since $\alpha>0$, we have that $F_{\alpha,\eps}(\rho)\geq \int_0^\eps|\rho'(\ell)|^2\dd \ell$ for all $\rho\in \mathscr{V}$.
	There exists $C>0$ such that, for all $\rho\in\mathscr{V}$,
	 \[ \int_0^\eps|\rho'(\ell)|^2\dd \ell\geq C\int_0^{\eps}|\rho(\ell)|^2\dd \ell\,.\]

	
	This ensures that any minimizing sequence $(\rho_n)_{n\in \NN}\subset  \mathscr{V}$  is bounded in $H^1(I)$ and any $H^1$-weak limit is a minimizer of $\inf\{F_{\alpha,\eps}(\rho)\,, \rho\in\mathscr{V} \}$.
	\item
	$F_{\alpha,\eps}^{1/2}$ is an euclidian norm on $\mathscr{V}$ so that $F_{\alpha,\eps}$ is strictly convex and the minimizer is unique.
	\item
	At a minimum $\rho$, the Euler-Lagrange equation is
	\[(e^{\alpha\ell} \rho')'=0\,.\]
	Thus, there exist $(c, d)\in\mathbb{R}^2$ such that, for all $\ell\in I$,
	\[\rho(\ell)=d-c\alpha^{-1}e^{-\alpha\ell}\,,\]
	so that, with the boundary conditions we find the function $\rho_{\alpha,\eps}$. 
	\item
	We have
	\[\int_{0}^\eps e^{\alpha\ell}|\rho'(\ell)|^2\dd\ell=\alpha^2(1-e^{-\eps\alpha})^{-2}\int_{0}^\eps e^{-\alpha\ell}\dd\ell=\frac{\alpha}{1-e^{-\eps\alpha}}\,.\]
	\item
	We also have
	\[
	\pa_\alpha\rho_{\alpha,\eps}(\ell) = \frac{1}{(1-e^{-\alpha\eps})^2}\left(\ell e^{-\alpha \ell}(1-e^{-\alpha\eps})-(1-e^{-\alpha \ell})\eps e^{-\alpha \eps}\right)\,,
	\]
	 for $\ell\in(0,\eps)$ and
	 \[\begin{split}
	\int_0^\eps e^{\alpha \ell}|\pa_\alpha \rho_{\alpha,\eps}|^2\dd \ell
	&
	\leq 
	\frac{1}{(1-e^{-\alpha\eps})^4}\Big(
		\int_{0}^\eps \ell^2e^{-\alpha\ell}\dd \ell
		\left(1-e^{-\alpha \eps}\right)^2+
		\int_{0}^\eps e^{-\alpha\ell}\dd \ell
		\left(\eps e^{-\alpha\eps}\right)^2
		\\&\qquad\qquad\qquad\qquad\qquad\qquad\qquad\qquad\qquad
		+\int_{0}^\eps e^{\alpha\ell}\dd \ell
		\left(\eps e^{-\alpha\eps}\right)^2
	\Big)
	\\
	&
	\leq  C\left(
		\alpha^{-3} + e^{-\alpha\eps}\eps^2\alpha^{-1}
	\right)\,.
\end{split}\]
\end{enumerate}
\end{proof}

\section{Hopf's Lemma with Dini regularity}
In the following lemma, we present a simple proof of an extension of Hopf's Lemma to the case when $\Omega$ is Dini regular. The standard version of Hopf's Lemma given for instance in \cite[Hopf's Lemma, section 6.4.2] {evans1998partial} requires essentially $\mathscr{C}^2$ regularity.  However, the regularity can be lowered up to Dini (see \cite{MR3513140} and the references therein).
\begin{lemma}\label{lem.hopfDini}
	Let $\Omega$ being a simply connected, Dini regular, bounded open set. Then, if $\phi$ is the solution of \eqref{eq:0modeFlux}, the function $s\in\partial \Omega \mapsto \pa_\n \phi(s)$ is continuous and
	\[
		\pa_\n \phi>0\text{ on }\pa \Omega\,.
	\]
\end{lemma}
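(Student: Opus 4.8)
The plan is to transport the equation to the unit disk by the Riemann map and apply the classical Hopf Lemma there, the point being that the interior ball condition — which is \emph{not} available on a merely Dini-regular domain — holds trivially for the disk. Concretely: since $\partial\Omega$ is Dini regular it is in particular of class $\mathscr{C}^1$, and by the boundary behaviour of conformal maps recalled in Notation \ref{not.RM} (\cite[Theorem 3.5]{P92}, which requires only Dini regularity), the Riemann map $F:\overline{D(0,1)}\to\overline\Omega$ extends to a $\mathscr{C}^1$ diffeomorphism of the closures, with $F'$ continuous and $0<c_1\le|F'|\le c_2$ on $\overline{D(0,1)}$. Set $\check\phi=\phi\circ F$ and $\check B=|F'|^2\,(B\circ F)$, so that $\Delta\check\phi=\check B$ in $D(0,1)$, $\check\phi=0$ on $\partial D(0,1)$, and, since $F'$ is non-vanishing and $B>0$, one has $\check B\in\mathscr{C}(\overline{D(0,1)})$ with $\check B>0$.

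First I would establish that $\check\phi\in\mathscr{C}^1(\overline{D(0,1)})$ — in fact $\check\phi\in\mathscr{C}^{1,\beta}(\overline{D(0,1)})$ for each $\beta\in(0,1)$. Since $\check B$ is bounded, the $L^p$-theory for the Dirichlet Laplacian on the smooth domain $D(0,1)$ gives $\check\phi\in W^{2,p}(D(0,1))$ for every $p<\infty$, whence $\check\phi\in\mathscr{C}^{1,1-2/p}(\overline{D(0,1)})$ by Sobolev embedding (one may alternatively split $\check\phi$ into the Newtonian potential of a continuous compactly supported extension of $\check B$, which is $\mathscr{C}^{1,\beta}_{\mathrm{loc}}$ for all $\beta<1$, plus a function harmonic in $D(0,1)$ with $\mathscr{C}^{1,\beta}$ boundary data on the real-analytic circle). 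Pulling back through $F^{-1}\in\mathscr{C}^1(\overline\Omega)$ yields $\phi=\check\phi\circ F^{-1}\in\mathscr{C}^1(\overline\Omega)$, so $\nabla\phi$ extends continuously up to $\partial\Omega$; as $\partial\Omega$ is $\mathscr{C}^1$ its outward unit normal $\n$ is continuous, and therefore $s\mapsto\pa_\n\phi(s)=\nabla\phi(s)\cdot\n(s)$ is continuous on $\partial\Omega$, which is the first assertion.

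For the positivity, I would observe that $B\ge0$ with $B\not\equiv0$ makes $\phi$ subharmonic and non-constant in $\Omega$, so the maximum principle together with $\phi|_{\partial\Omega}=0$ forces $\phi<0$ throughout $\Omega$; hence $\check\phi<0$ in $D(0,1)$ and $\check\phi=0$ on $\partial D(0,1)$. Since $\Delta\check\phi=\check B\ge0$ in $D(0,1)$, $\check\phi\in\mathscr{C}^2(D(0,1))\cap\mathscr{C}^1(\overline{D(0,1)})$, and $D(0,1)$ satisfies the interior ball condition at every boundary point, the classical Hopf Lemma \cite[Section 6.4.2]{evans1998partial} gives $\pa_{\check\n}\check\phi(y)>0$ for all $y\in\partial D(0,1)$. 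Transferring back via the chain rule — a conformal map carries the normal direction to the normal direction while scaling lengths by $|F'|$, which is precisely identity \eqref{eq.normal} — one gets $\pa_\n\phi(F(y))=|F'(y)|^{-1}\,\pa_{\check\n}\check\phi(y)>0$ for $y\in\partial D(0,1)$, and since $F$ maps $\partial D(0,1)$ onto $\partial\Omega$ this yields $\pa_\n\phi>0$ on all of $\partial\Omega$.

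The main obstacle is the boundary regularity in the second step: $\check B$ is only continuous (not Hölder) up to $\partial D(0,1)$, so the plain Schauder estimate does not apply and one must go through the $W^{2,p}$ theory with $p<\infty$ (or the $\mathscr{C}^{1,\beta}$-estimate for Newtonian potentials of bounded functions) to be sure that $\check\phi$ — hence $\phi$ — is genuinely $\mathscr{C}^1$ up to the boundary; this is simultaneously what makes $\pa_\n\phi$ well defined, what gives its continuity, and what licenses the application of Hopf's Lemma on the disk. A secondary technical point is to check that the chain-rule identity \eqref{eq.normal} remains valid with only $\mathscr{C}^1$-regularity up to the boundary, which it does because $F$ and $\check\phi$ are $\mathscr{C}^1$ up to $\partial D(0,1)$ and $F'$ does not vanish there.
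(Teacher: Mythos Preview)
Your proposal is correct and follows essentially the same route as the paper: transport to the unit disk via the Riemann map, obtain enough boundary regularity of $\check\phi$ to apply the classical Hopf Lemma on $D(0,1)$, and transfer back via the conformal chain rule. The only minor difference is that the paper invokes a reference giving $\check\phi\in\mathscr{C}^{1,1}(\overline{D(0,1)})$ directly, whereas you reach $\mathscr{C}^{1,\beta}$ for all $\beta<1$ through the $W^{2,p}$ theory and Sobolev embedding; either suffices for Hopf's Lemma.
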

\begin{proof}
	Let $\phi$ be the solution of \eqref{eq:0modeFlux}. By the Riemann mapping Theorem \cite{P92}, there exists a bi-holomorphic map $F\colon D(0,1)\to \Omega$ such that $F'$ is continuous on $\overline{D(0,1)}$. The function $\check \phi = \phi\circ F$ is the solution of \eqref{eq:0modeFlux} on $D(0,1)$ for $\check B = |F'|^2B\circ F$. By \cite[Corollary .36]{MR737190}, we get that $\check\phi$ is $\mathscr{C}^{1,1}$ on $D(0,1)$ and Hopf's Lemma \cite[Hopf's Lemma, Section 6.4.2] {evans1998partial} ensures that $\pa_\n\check \phi>0$. The result follows from the fact that
	\[
		\pa_\n \phi = |(F^{-1})'|\pa_\n\check \phi\circ F^{-1}\,.
	\]
\end{proof}

\section{A density result}
\begin{lemma}\label{lem.density}
	Assume that $\Omega$ is bounded, simply connected and that $\partial\Omega$ is Dini-continuous. Then, the set
	$
		\mathscr{H}^2(\Omega)\cap W^{1,\infty}(\Omega)	
	$
	is dense in $\mathscr{H}^2(\Omega)$.
\end{lemma}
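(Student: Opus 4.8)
The plan is to transfer the question to the unit disk via the Riemann map and there use a dilation argument. Let $F\colon D(0,1)\to\Omega$ be the biholomorphism of Notation \ref{not.RM}; since $\partial\Omega$ is Dini-continuous, $F$ extends continuously to $\overline{D(0,1)}$ and $F'$ extends continuously with $c_1\leq|F'|\leq c_2$ on $\overline{D(0,1)}$ (see \cite[Theorem 3.5]{P92} and \cite[Section 3.3]{P92}). For $u\in\mathscr{H}^2(\Omega)$ set $\check u=u\circ F\in\mathscr{H}(D(0,1))$. The change of variables $y\in\partial D(0,1)\mapsto F(y)\in\partial\Omega$, together with the boundary correspondence of $\mathscr{H}^2$ functions (see \cite[Chapter 10]{duren2000theory} and Remark \ref{rem.choihardy}), shows that $u\in\mathscr{H}^2(\Omega)$ if and only if $\check u\in\mathscr{H}^2(D(0,1))$, with equivalence of norms $\Nh(u)^2=\int_{\partial\Omega}|u|^2\pa_\n\phi\,\dd y$ comparable to $\int_{\partial D(0,1)}|\check u|^2\,\dd s$ (since $\pa_\n\phi$ is continuous and bounded above and below on $\partial\Omega$ by Lemma \ref{lem.hopfDini}, and $|F'|$ is comparable to $1$). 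Moreover $u\in W^{1,\infty}(\Omega)$ is equivalent to $\check u\in W^{1,\infty}(D(0,1))$, again because $c_1\leq|F'|\leq c_2$ and $F$ is bi-Lipschitz up to the boundary. Hence it suffices to prove the density statement on $D(0,1)$, i.e. that $\mathscr{H}^2(D(0,1))\cap W^{1,\infty}(D(0,1))$ is dense in $\mathscr{H}^2(D(0,1))=H^2$, the classical Hardy space.

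On the disk this is standard: given $\check u\in H^2$, for $r\in(0,1)$ put $\check u_r(y)=\check u(ry)$. Then $\check u_r$ is holomorphic on a neighborhood of $\overline{D(0,1)}$, so $\check u_r\in W^{1,\infty}(D(0,1))$, and $\check u_r\to\check u$ in $H^2$ as $r\to1^-$ by the standard theory of Hardy spaces (the dilations form an approximate identity; see \cite[Chapter 10]{duren2000theory}). This gives the approximating sequence, so transporting back by $F^{-1}$ produces $(u_m)\subset\mathscr{H}^2(\Omega)\cap W^{1,\infty}(\Omega)$ with $u_m\to u$ in $\mathscr{H}^2(\Omega)$.

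The only real point requiring care is the equivalence ``$u\in\mathscr{H}^2(\Omega)\iff\check u\in\mathscr{H}^2(D(0,1))$'' and the corresponding statement for $W^{1,\infty}$, that is, making sure the Riemann map interacts well with the Hardy-space boundary integral. This is where Dini continuity of $\partial\Omega$ is used: it guarantees the extension of $F'$ to a continuous, nonvanishing function on $\overline{D(0,1)}$, which in turn controls both the arclength measure on $\partial\Omega$ versus that on $\partial D(0,1)$ and the Lipschitz constants. I expect this bookkeeping — rather than the dilation step — to be the main (though routine) obstacle; once it is in place the result is immediate.
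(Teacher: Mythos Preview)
Your proposal is correct and follows essentially the same route as the paper: transfer to the unit disk via the Riemann map (using the Dini assumption so that $F'$ extends continuously and is bounded away from $0$ and $\infty$ on $\overline{D(0,1)}$), approximate by dilations $\check u_r(y)=\check u(ry)$, and pull back. The paper carries out the convergence step by an explicit Fourier-coefficient computation and dominated convergence rather than invoking the abstract Hardy-space dilation theorem, but this is purely cosmetic.
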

\begin{proof}
We recall Notation \ref{not.RM}. Let $u\in \mathscr{H}^2(\Omega)$. Then, $u\circ F = \sum_{k\geq 0}a_kz^k$ is holomorphic on $D(0,1)$ and $(a_k)_{k\geq0}\in \ell^2(\NN)$. Let $\eps\in(0,1)$. The function 
	\[
		\widetilde u_\eps : D(0,1)\ni z\mapsto u\circ F((1-\eps)z)\in \CC
	\]
	is holomorphic on $D(0,1/(1-\eps))$. We denote by $u_\eps = \widetilde u_\eps\circ F^{-1}$. We have
	\[\begin{split}
		\norm{u-u_\eps}_{\mathscr{H}^2(\Omega)}^2 
		&:= \int_{\partial \Omega}|u(x)-u_\eps(x)|^2\partial_\n \phi\dd x 
		\\&
		= \int_{\partial D(0,1)}|u\circ F(y)-u\circ F((1-\eps)y)|^2|F'(y)|\partial_\n \phi\circ F(y)\dd y
		\\&
		\leq
		c_2\|\partial_\n \phi\|_{L^\infty}\int_{\partial D(0,1)}|u\circ F(y)-u\circ F((1-\eps)y)|^2\dd y
		\\&
		\leq
		c_2\|\partial_\n \phi\|_{L^\infty}\sum_{k\geq 1}|a_k|^2|1-(1-\eps)^k|^2\,.
	\end{split}\]
	 Note that $\pa_\n \phi$ is bounded by Lemma \ref{lem.hopfDini}.
	By Lebesgue's theorem, we get that $(u_\eps)_{\eps\in(0,1)}$ converges to $u$ in $\mathscr{H}^2(\Omega)$. Let us also remark that $(u_\eps)_{\eps\in(0,1)}\subset W^{1,\infty}(\Omega)$ so that the result follows.
\end{proof}

\subsection*{Acknowledgment}
N.R. is grateful to T. Hmidi for sharing the reference \cite{P92}. L.~L.T. is grateful to S. Monniaux, J. Olivier, S. Rigat and E.H. Youssfi for useful discussions. The authors are grateful to Bernard Helffer for many useful comments. 
L.~L.T. is supported by ANR DYRAQ ANR-17-CE40-0016-01. E.S has been partially funded by Fondecyt (Chile)
project  \# 118--0355.
\bibliographystyle{abbrv}
\bibliography{biblioPauli}

 \end{document}